\documentclass[letterpaper, 12pt]{article}

\usepackage[margin=1in]{geometry}
 
\usepackage{tikz}

\usetikzlibrary{shadings}

\usepackage{amscd}

\usepackage{amsmath}

\usepackage{amsfonts}

\usepackage{amssymb}

\usepackage{amsthm}

\usepackage[utf8]{inputenc}
\usepackage[T1]{fontenc}
\usepackage[charter]{mathdesign} 

\usepackage{bbold}

\usepackage{mathtools}

\usepackage{arydshln}

\usepackage{fancyhdr}

\usepackage{verbatim}

\usepackage{enumitem}

\usepackage[normalem]{ulem}

\usepackage{color}

\usepackage[colorlinks=true, linkcolor=teal, citecolor=-teal,
pagebackref=true]{hyperref}

\usepackage[reftex]{theoremref}

\usepackage{nameref}

\usepackage{comment}

\usepackage[roundcorner=5pt,  skipabove=10pt, skipbelow=10pt]{mdframed}

\theoremstyle{definition}
\newmdtheoremenv[backgroundcolor=orange!10]{deminote}{Note}
\newmdtheoremenv[backgroundcolor=teal!10]{felipenote}{Note}
\newmdtheoremenv[backgroundcolor=pink!100]{manuelnote}{Note}

\usepackage[colorinlistoftodos]{todonotes}

\theoremstyle{plain}
\newtheorem*{theorem*}{Theorem}
\newtheorem{theorem}{Theorem}[section]
\newtheorem{lemma}[theorem]{Lemma}

\newtheorem{proposition}[theorem]{Proposition}

\newtheorem*{claim*}{Claim}

\theoremstyle{definition}

\newtheorem*{example*}{Example}

\theoremstyle{remark}
\newtheorem*{remark*}{Remark}

\newcommand{\ba}{\mathbf a}

\newcommand{\bA}{\mathbf A}

\newcommand{\balpha}{\boldsymbol \alpha}

\newcommand{\bbeta}{\boldsymbol \beta}

\newcommand{\bb}{\mathbf b}

\newcommand{\bc}{\mathbf c}

\newcommand{\bgamma}{\boldsymbol \gamma}

\newcommand{\bq}{\mathbf{q}}

\newcommand{\bu}{\mathbf{u}}

\newcommand{\bv}{\mathbf{v}}

\newcommand{\bx}{\mathbf x}

\newcommand{\by}{\mathbf y}

\newcommand{\RR}{\mathbb{R}}

\newcommand{\QQ}{\mathbb{Q}}

\newcommand{\NN}{\mathbb{N}}

\newcommand{\N}{\mathbb{N}}

\newcommand{\TT}{\mathbb{T}}

\newcommand{\ZZ}{\mathbb{Z}}

\newcommand{\calB}{\mathcal{B}}

\newcommand{\calD}{\mathcal{D}}

\newcommand{\hatD}{\widehat{D}}

\newcommand{\calH}{\mathcal{H}}

\newcommand{\calM}{\mathcal{M}}

\newcommand{\calQ}{\mathcal{Q}}

\newcommand{\bone}{\mathbb{1}}

\newcommand{\bzero}{\mathbf{0}}

\renewcommand{\leq}{\leqslant} \renewcommand{\geq}{\geqslant}

\renewcommand{\subset}{\subseteq}

\DeclarePairedDelimiter{\indcond}{[}{]}
\newcommand{\ind}[1]{\bone_{\indcond*{#1}}}

\DeclarePairedDelimiter{\abs}{\lvert}{\rvert}

\DeclarePairedDelimiter{\norm}{\lVert}{\rVert}

\DeclarePairedDelimiter{\ceil}{\lceil}{\rceil}
\DeclarePairedDelimiter{\set}{\lbrace}{\rbrace}
\DeclarePairedDelimiter{\parens}{\lparen}{\rparen}
\DeclarePairedDelimiter{\brackets}{\lbrack}{\rbrack}

\DeclareMathOperator{\supp}{supp}

\def\eps{{\varepsilon}}

\def\1int{{[0,1]}}

\title{The inhomogeneous Khintchine Theorem in dimension two}

\author{Demi Allen \\ University of Exeter \\ \texttt{d.d.allen@exeter.ac.uk} \and Manuel Hauke--Treuer \\ Graz University of Technology \\ \texttt{hauke@math.tugraz.at} \and Felipe A.~Ram{\'i}rez \\ Wesleyan University \\ \texttt{framirez@wesleyan.edu}}

\date{}

\begin{document}

\maketitle

\begin{abstract}
  We prove that the inhomogeneous variant of Khintchine's Theorem
  holds in dimension~$2$ without any monotonicity assumption. This
  resolves the last remaining case in the metric theory of
  inhomogeneous Diophantine approximation: while the monotonicity
  assumption is known to be unnecessary in dimensions $m\geq 3$ and
  necessary in dimension $m=1$, the two-dimensional case has remained
  open. It also settles the final outstanding case of a
  Khintchine--Groshev-type theorem for the approximation of systems of
  linear forms, confirming a conjecture of the first and third
  authors. Our results bring the inhomogeneous theory of metric
  Diophantine approximation into alignment with its homogeneous
  counterpart.
\end{abstract}

\tableofcontents

\section{Introduction and Statement of Results}

The problem addressed in this article concerns the simultaneous
approximation of two real numbers $\alpha_1$ and $\alpha_2$ by shifted
rational numbers $(u_1+\gamma_1)/q$ and $(u_2+\gamma_2)/q$ where
$\gamma_1, \gamma_2\in\RR$ are arbitrary fixed parameters and
$(u_1, u_2, q)\in\ZZ^2\times \NN$. Our main result, which we presently
state, completes a line of research that is inspired by Khintchine's
theorem (\cite[1924]{Khintchine}, \cite[1926]{KhintchineHD}) and has
been fully developed in all dimensions other than two in the
intervening decades. It is the seemingly harmless question of whether
a monotonicity condition can be removed from the classical theorem of
Khintchine and its inhomogeneous analogues.

To a function $\psi:\NN\to[0, \infty)$ and a fixed
$\bgamma = (\gamma_1, \gamma_2)\in \RR^2$ we associate the set
\begin{equation*}
  W^{\bgamma}(\psi) = \set*{\balpha \in [0,1]^2 : \abs{q \balpha - \bu -  \bgamma} < \psi(q) \textrm{ for infinitely many } (\bu,q) = (u_1,u_2,q) \in \ZZ^2 \times \NN},
\end{equation*}
where $\abs{\cdot}$ denotes the maximum norm in $\RR^2$. This is the
set of vectors in $\RR^2$ that are $\psi$-\emph{approximable with
  inhomogeneous parameter} $\bgamma$. (The restriction to $[0,1]^2$ in
the definition is a convenience afforded by the fact that the defining
condition is $\ZZ^2$-periodic.) Let $\lambda$ denote Lebesgue
measure. We prove the following theorem.

\begin{theorem}\th\label{thm:actualresult}
 Let $\psi:\NN\to [0,\infty)$ and let $\bgamma\in\RR^2$. Then
  \begin{equation*}
    \lambda(W^{\bgamma}(\psi)) =
    \begin{cases}
      0 &\textrm{if } \sum_{q=1}^\infty \psi(q)^2 < \infty, \\[2ex]
      1 &\textrm{if } \sum_{q=1}^\infty \psi(q)^2 = \infty.
    \end{cases}
  \end{equation*}
\end{theorem}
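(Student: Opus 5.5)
The convergence half is immediate from the Borel--Cantelli lemma. For each $q$, the set of $\balpha\in[0,1]^2$ with $\abs{q\balpha-\bu-\bgamma}<\psi(q)$ for some $\bu$ has measure at most $(2\psi(q))^2$. Summability of $\psi(q)^2$ then gives $\lambda(W^{\bgamma}(\psi))=0$. The content is the divergence half.

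For divergence I would first reduce to a convenient normal form. Since only $\sum\psi(q)^2=\infty$ matters, we may assume $\psi(q)\leq 1/2$ for all $q$; otherwise $W^{\bgamma}(\psi)$ is everything. We may also assume $\psi(q)\geq q^{-1}(\log q)^{-1}$ or $\psi(q)=0$: the part of the sum coming from smaller values converges, so discarding it loses nothing. Next, a zero--one law for inhomogeneous approximation (a Cassels/Gallagher type result, presumably available or provable via ergodicity of $\balpha\mapsto q\balpha$ on $\TT^2$ along suitable subsequences) shows that $\lambda(W^{\bgamma}(\psi))\in\{0,1\}$. It therefore suffices to show positive measure. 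For that I would use the quasi-independence on average (Chung--Erd\H{o}s / divergence Borel--Cantelli) lemma: with
\[
A_q=\set*{\balpha\in[0,1]^2:\abs{q\balpha-\bu-\bgamma}<\psi(q)\text{ for some }\bu\in\ZZ^2},
\]
it is enough to show that
\[
\sum_{q,r\leq N}\lambda(A_q\cap A_r)\leq C\Bigl(\sum_{q\leq N}\lambda(A_q)\Bigr)^2
\]
for infinitely many $N$. A natural refinement is to restrict $q$ to a sub-block where $\psi$ is roughly constant on dyadic scales, via a pigeonhole argument.

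The overlap estimate is the heart of the proof. Write $\lambda(A_q\cap A_r)$ through Fourier expansion of the indicator of a box on $\TT^2$. Equivalently, count the overlaps directly: $A_q\cap A_r$ is a union of intersections of boxes of side $2\psi(q)/q$ and $2\psi(r)/r$ centred at $(\bu+\bgamma)/q$ and $(\bv+\bgamma)/r$. Elementary lattice counting gives
\[
\lambda(A_q\cap A_r)\ll \lambda(A_q)\lambda(A_r)+\Bigl(\frac{\gcd(q,r)\min(\psi(q)/q,\psi(r)/r)}{\ldots}\Bigr)^2\cdot(\text{error}),
\]
that is, the dependence is controlled by $d=\gcd(q,r)$ together with the inhomogeneous shift $\bgamma(1/q-1/r)$. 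Unlike the homogeneous case (Duffin--Schaeffer / Pollington--Vaughan in dimension $2$), we cannot pass to coprime numerators, since $\bgamma$ breaks that symmetry. The error term is of size roughly $\psi(q)\psi(r)\cdot d/\max(q,r)$ per coordinate, squared, but it is modulated by how close $\bgamma(r-q)/d$ is to $\ZZ^2$.

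The main obstacle is summing this error over pairs. In dimension $\geq3$ the extra power makes the gcd sum converge trivially, while in dimension $1$ the Duffin--Schaeffer-type counterexamples show it can genuinely blow up. Dimension $2$ is exactly critical: the sum $\sum_{q,r}\psi(q)\psi(r)\,(d/\max(q,r))^2$ is comparable to a GCD sum that can exceed $(\sum\psi^2)^2$ by a $\log$ factor. My plan is to split pairs into those with $d$ small relative to $\max(q,r)/(\text{quasi-polynomial factor})$, which are handled by GCD-sum bounds, and the remaining pairs. For the latter I would adapt the Koukoulopoulos--Maynard GCD-graph compression machinery. The idea is to show that if the overlap sum were too large, a structured dense subset of $q$'s with large common divisors would exist. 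On that structured set, the second dimension supplies an extra averaging of the shift $\bgamma$ over residues modulo $d$, gaining enough to close the estimate. I expect this last step to be by far the hardest.
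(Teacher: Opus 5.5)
Your convergence half and the preliminary reductions (capping $\psi$ at $1/2$, discarding values $\psi(q)<1/(q\log q)$, reducing to positive measure) are fine. The divergence argument, however, has a genuine gap at exactly the step you flag as hardest, and the route you choose makes it unlikely to close.

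You work with the full sets $A_q$ and explicitly give up any coprimality restriction on numerators. Then the bad overlaps are not an artifact of the counting. Put $h=(q-r)/\gcd(q,r)$ and $D=\max\{2\psi(q)/q,2\psi(r)/r\}\,qr/\gcd(q,r)<1$. Whenever $\norm{h\bgamma}$ is small compared to $D$, about $\gcd(q,r)^2$ pairs of boxes essentially coincide, contributing $\gg\gcd(q,r)^2\min\{\psi(q)/q,\psi(r)/r\}^2$ to $\lambda(A_q\cap A_r)$.

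Your only proposed saving is averaging of $h\bgamma$ modulo $1$, and this fails on both ends:
\begin{itemize}
\item It gives nothing for $\bgamma\in\QQ^2$. For $\bgamma=\bzero$ the indicator $\bone_{[\norm{h\bgamma}<D]}$ is identically $1$, and you are left with the two-dimensional GCD sum whose $\log\log$ loss is sharp (Lewko--Radziwi\l\l). That loss is fatal when $\sum\psi(q)^2$ diverges slowly.
\item It is insufficient for Liouville $\bgamma$, where $\norm{h\bgamma}$ is tiny for many small $h$ at infinitely many scales. The paper's model problem shows that equidistribution of $h\bgamma$ alone cannot work there.
\end{itemize}
The second coordinate does not rescue this, since diagonal $\bgamma=(\gamma,\gamma)$ already contains the worst case. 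Nor does the Koukoulopoulos--Maynard GCD-graph machinery. It controls the arithmetic structure of the support of $\psi$ given Pollington--Vaughan-type overlap bounds, which themselves rest on coprime numerators. It has no mechanism for the Diophantine behaviour of $\bgamma$, and your sketch supplies none; your overlap bound is even left with placeholders. Whether the unmodified sets are quasi-independent on average in dimension two is open, and the paper notes it may fail.

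The missing idea is the one you dismiss. One can pass to ``coprime numerators'', but relative to rational approximations of $\bgamma$ chosen at the scale of $\psi(q)$. For $2^{-k}<\psi(q)\leq 2^{-k+1}$, the paper takes $B_k$ minimal with $\abs{\bgamma-\bA_k/B_k}<2^{-k}$.
\begin{itemize}
\item If $B_k\leq 2^{\sigma k}$, it keeps only those $\bu$ with $\gcd(B_k\bu+\bA_k,B_kq)=1$.
\item If $B_k>2^{\sigma k}$, it shift-reduces instead with respect to a Dirichlet approximation $\ba_k/b_k$ with $b_k<B_k$ and $\norm{b_k\bgamma}\leq B_k^{-1/2}$.
\end{itemize}
These subsets keep a constant proportion of the measure and stay equidistributed. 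That is what lets the Beresnevich--Hauke--Velani criterion upgrade quasi-independence on average directly to full measure.

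For $r\nmid q$, the centres with $\bgamma$ replaced by $\bA_\bullet/B_\bullet$ (resp.\ $\ba_\bullet/b_\bullet$) are separated by at least $\gcd(q,r)/(qrB_\bullet)$ (resp.\ with $b_\bullet$). The triangle inequality turns this into a second indicator, $\bone_{[2\widehat{D}\geq 1/B_\bullet]}$ or $\bone_{[2D\geq 1/b_\bullet]}$, alongside $\bone_{[\norm{h\bgamma}<D]}$. Each indicator is effective in a different regime:
\begin{itemize}
\item when $B_\bullet$ is small, Bohr-set bounds for the rational rotation by $\bA_\bullet/B_\bullet$;
\item when $B_\bullet$ is large, the decomposition $h=mb_\bullet+s$ plus spacing of $mb_\bullet\bgamma$.
\end{itemize}
Together these handle Liouville and non-Liouville $\bgamma$ uniformly. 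Pairs with $r\mid q$ are absorbed by trivial overlap bounds, and $\bgamma\in\QQ^2$ is quoted from Kim. Without this shift-reduction, or some substitute producing a comparable second indicator, your plan does not yield the second-moment bound.
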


Previously, \th~\ref{thm:actualresult} was known to hold under
the added condition that $\psi$ is monotonic. For the moment, let us
write $W^{\bgamma}(\psi)=W_2^{\bgamma}(\psi)$. One can define the
corresponding set $W_m^{\bgamma}(\psi)$ in any dimension $m\geq 1$,
associated to some fixed $\bgamma\in\RR^m$. Khintchine proved that if
$\psi$ is monotonic and $\bgamma=\bzero$, then
\begin{equation}\label{eq:sum-measure}
  \lambda(W_m^{\bgamma}(\psi)) =
  \begin{cases}
    0 &\textrm{if } \sum_{q=1}^\infty \psi(q)^m < \infty, \\[2ex]
    1 &\textrm{if } \sum_{q=1}^\infty \psi(q)^m = \infty.
  \end{cases}
\end{equation}
where $\lambda$ now denotes Lebesgue measure in the appropriate
dimension.  This was subsequently extended to general $\bgamma$---in
1958 by Sz{\"u}sz for $m=1$ \cite{Szusz} and in 1964 by Schmidt for
$m > 1$~\cite{Schmidt}.

These results are often called metric dichotomies; they have a
``convergence part'' and a ``divergence part.''  The series
$\sum \psi(q)^m$ can be understood as a sum of measures, and by taking
this view, one can quickly prove the convergence part of the dichotomy
by a direct application of the Borel--Cantelli lemma. Since no
monotonicity assumption is needed in the application of
Borel--Cantelli, the question of that assumption's necessity in the
divergence part of the dichotomy quickly arose. In 1941, Duffin and
Schaeffer~\cite{DS} produced a famous example showing that
monotonicity is indeed needed in the case $m=1, \gamma=0$, and they
conjectured that a modified statement (requiring $\gcd(u,q)=1$) would
hold without any monotonicity assumption on $\psi$. The
Duffin--Schaeffer conjecture has its own rich history, culminating in
a proof by Koukoulopoulos and Maynard in 2020~\cite{KM}. The reader
may
consult~\cite{AistleitnerDS,ABH,ALMTZ,BHHVextraii,DS,ErdosDS,FH,HR,ManuelSurvey,HVW,HPVextra,KMY,KL,PV,RamirezDS,Vaaler}
for accounts of the story of the Duffin--Schaeffer conjecture and
natural continuations of it.

Meanwhile, Gallagher~\cite{Gallagher} showed in 1965 that monotonicity
can be removed from Khintchine's theorem when $m\geq 2$ and
$\bgamma=\bzero$. His proof works for general $\bgamma$ when
$m\geq 3$, though it was first stated explicitly this way in 1967 by
Ennola~\cite{Ennola}. In 1938, Groshev~\cite{Groshev} proved a dual
version of Khintchine's theorem where $\balpha\in\RR^n$ is viewed as a
column vector and the quantity to be minimized is
$\abs{\bq \balpha - u}$ where $\bq\in \ZZ^n$ is viewed as a row
vector. Again, a monotonicity assumption appears in the divergence
part of the theorem. The more general ``Khintchine--Groshev theory''
for systems of linear forms concerns
$\abs{\bq \balpha - \bu - \bgamma}$, where $\balpha$ is an $n\times m$
matrix with real entries, $(\bu,\bq)\in\ZZ^m\times\ZZ^n$, and
$\bgamma\in\RR^m$. Sprind{\v z}uk's 1979 book~\cite{Sprindzuk} proves
metric dichotomies in this context, carrying no monotonicity
assumption in the cases where $n\geq 3$ {and $\psi$ is univariate in
  the sense that it depends only on $\abs{\bq}$.}

The theme that emerges is that it is easier to prove the general
result when the dimensions are high, and more recent work has
continued to bear this out. In 2010, Beresnevich and
Velani~\cite{BVKG} completed the homogeneous (meaning
$\bgamma=\bzero$) Khintchine--Groshev theory, removing monotonicity
assumptions in all open dimensions other than $(m,n)=(1,1)$ in the
{univariate case, and in all dimensions where $m>1$ in the
  more general multivariate case where $\psi$ may depend freely on
  $\bq$. Duffin and Schaeffer had already shown through their example
  that monotonicity was required in dimension $(m,n)=(1,1)$, and their
  construction is quickly seen to work in all multivariate settings
  with $m=1$}. In 2017, the third author~\cite{counterexamples}
constructed analogous examples to show that in dimension one,
monotonicity is required for every inhomogeneous parameter
$\gamma$. Yu~\cite[2021]{Yu} then proved a suite of
  inhomogeneous results in the simultaneous settings (meaning $n=1$):
  for $m\geq 3$ he gave a modern proof of~(\ref{eq:sum-measure}) for
  general $\bgamma$ and with no extra assumption on $\psi$; for $m=2$
  he proved a version of~\th\ref{thm:actualresult} under an ``extra
  divergence'' assumption on the series. In 2023, the first
and third authors~\cite{AR} settled most of the remaining cases of the
inhomogeneous Khintchine--Groshev theory, leaving only $(n,m)=(1,2)$
and the univariate version of $(n,m)=(2,1)$
as conjectures. The latter was settled by the second
author~\cite[2025]{Hauke} for non-Liouville $\gamma$, and then
generally by Kim~\cite[2025]{Kim}. In that same paper, Kim also
settled the $(n,m)=(1,2)$ problem for
$\bgamma\in\QQ^2$, and in a recent paper~\cite{Kim_new}, he proved~\th\ref{thm:actualresult} under a polynomial decay condition on $\psi$. \th~\ref{thm:actualresult} completes the story by
proving the $(n,m)=(1,2)$ case for general $\bgamma\in\RR^2$ and general $\psi$. 

We will soon commence a more detailed discussion of these problems
and, in particular, of the obstacles preventing previous methods from
working in dimension two. But first, we present the most general
statements that come from combining what was previously known
with~\th~\ref{thm:actualresult}.

\subsubsection*{Combined general statements}

Let $m,n\in\NN$. To a function $\psi:\ZZ^n\to [0, \infty)$ and a
fixed $\bgamma\in\RR^m$ we associate the set 
\begin{equation*}
  W_{n,m}^{\bgamma}(\psi) = \set*{\balpha \in [0,1]^{n\times m}: \abs{\bq \balpha - \bu -  \bgamma} < \psi(\bq) \textrm{ for infinitely many } (\bu,\bq)\in \ZZ^m \times \ZZ^n},
\end{equation*}
where $\abs{\cdot}$ denotes the maximum norm in $\RR^m$ and
$[0,1]^{n\times m}$ is the set of all $n \times m$ matrices with real
entries in $[0,1]$.  The following is the most general multivariate
statement in the inhomogeneous Khintchine--Groshev theory.

\begin{theorem*}[Multivariate Inhomogeneous Khintchine--Groshev Theorem]
  Let $m,n\in\NN$ with $m>1$, let \mbox{$\bgamma\in \RR^m$}, and let $\psi:\ZZ^n\to [0,\infty)$. Then
  \begin{equation*}
    \lambda(W_{n,m}^{\bgamma}(\psi)) =
    \begin{cases}
      0 &\textrm{if } \sum_{\bq \in \ZZ^n} \psi(\bq)^m < \infty, \\[2ex]
      1 &\textrm{if } \sum_{\bq \in \ZZ^n} \psi(\bq)^m = \infty.
    \end{cases}
  \end{equation*}
  On the other hand, for every $n \geq 1$ and every $\gamma\in\RR$
  there exists \mbox{$\psi:\ZZ^n\to [0,\infty)$} such that
  $\sum \psi(\bq)$ diverges, while $\lambda(W_{n,1}^\gamma(\psi))=0$.
\end{theorem*}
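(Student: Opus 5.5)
The statement above is a compilation result: it is proved by assembling \th~\ref{thm:actualresult} with the previously known cases cited in the introduction. No new argument is required beyond checking that the cited results cover every pair $(n,m)$ with $m>1$, together with the counterexamples for $m=1$.

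\textbf{Convergence half.} This is the standard Borel--Cantelli argument, valid in all dimensions. For each $\bq\neq\bzero$, consider the set of $\balpha\in[0,1]^{n\times m}$ with $\abs{\bq\balpha-\bu-\bgamma}<\psi(\bq)$ for some $\bu\in\ZZ^m$.
\begin{itemize}
\item Some coordinate of $\bq$ is nonzero. Using the $\ZZ^m$-periodicity of $\bq\balpha$ in the corresponding row of $\balpha$, the map is measure-preserving onto the torus.
\item Hence this set has measure $\ll \min(1,\psi(\bq))^m$.
\end{itemize}
Summability of $\sum\psi(\bq)^m$ then gives measure zero. The single term $\bq=\bzero$ contributes at most one pair $(\bu,\bq)$ unless $\abs{\bu+\bgamma}<\psi(\bzero)$ for infinitely many $\bu$, which cannot happen since $\psi(\bzero)<\infty$. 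So it is irrelevant.

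\textbf{Divergence half for $m>1$.} I would split by $(n,m)$.
\begin{itemize}
\item \emph{All $(n,m)$ with $m\geq2$ except $(n,m)=(1,2)$.} These were settled in the multivariate setting by the first and third authors~\cite{AR}. Their results handle general $\bgamma$ and general $\psi:\ZZ^n\to[0,\infty)$ without monotonicity; for $n=1$ and $m\geq3$ this is also Yu's result~\cite{Yu}.
\item \emph{The case $(n,m)=(1,2)$.} Here $\psi$ is a function on $\ZZ$. We reduce to \th~\ref{thm:actualresult} as follows.
\begin{itemize}
\item The condition for $-q$ is $\abs{q\balpha-(-\bu)-(-\bgamma)}<\psi(-q)$.
\item So $W^{\bgamma}_{1,2}(\psi)\supseteq W^{\bgamma}(\psi|_{\NN})\cup W^{-\bgamma}(\tilde\psi)$, where $\tilde\psi(q)=\psi(-q)$.
\item Divergence of $\sum_{\bq\in\ZZ}\psi(\bq)^2$ forces divergence of one of the two half-series.
\item Applying \th~\ref{thm:actualresult} to that half gives full measure.
\end{itemize}
\end{itemize}

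\textbf{Counterexample half for $m=1$.} For each $n\geq1$ and $\gamma$ we need a divergent $\psi$ with $\lambda(W_{n,1}^\gamma(\psi))=0$.
\begin{itemize}
\item For $n=1$ this is the construction of~\cite{counterexamples} for inhomogeneous parameters, which extends the Duffin--Schaeffer example.
\item For $n>1$, the introduction notes that these constructions transfer to the multivariate $m=1$ setting. I would take $\psi$ supported on $\bq=(q,0,\dots,0)$ with $\psi(q,0,\dots,0)=\psi_1(q)$, where $\psi_1$ is the one-dimensional counterexample.
\item Then $\bq\balpha=q\alpha_{11}$ depends only on the first entry. The set is a product of the measure-zero one-dimensional set with $[0,1]^{n-1}$, while $\sum\psi=\sum\psi_1=\infty$.
\end{itemize}

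\textbf{Main obstacle.} The only genuine work is checking the bookkeeping: that the cited multivariate results really cover every $(n,m)$ with $m>1$ other than $(1,2)$, for arbitrary $\bgamma$ and with $\psi$ allowed to be multivariate.
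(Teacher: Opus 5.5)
The gap is in the step you called bookkeeping. Your other parts are correct: the convergence half, the reduction of $(n,m)=(1,2)$ to Theorem~\ref{thm:actualresult} (splitting $q>0$ from $q<0$ and replacing $\bgamma$ by $-\bgamma$), and the $m=1$ counterexamples supported on $(q,0,\dots,0)$, which is exactly what the paper does.

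The multivariate cases $(n,2)$ with $n\geq 2$ were \emph{not} settled before this paper, by \cite{AR} or anyone else, for irrational $\bgamma$. The paper says explicitly that the $m=2$ cases were open. Your own counterexample trick shows the citation cannot be right. Take $\psi:\ZZ^n\to[0,\infty)$ supported on $\set{(q,0,\dots,0): q\in\NN}$ with $\psi(q,0,\dots,0)=\psi_1(q)$. Then $W_{n,2}^{\bgamma}(\psi)=W^{\bgamma}(\psi_1)\times[0,1]^{(n-1)\times 2}$. So the multivariate $(n,2)$ statement contains Theorem~\ref{thm:actualresult} as a special case and could not have been known before it. What was known for $(n,2)$, $n\geq 2$, is the univariate statement, where $\psi(\bq)$ depends only on $\abs{\bq}$. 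That averaging over spheres is exactly what a general multivariate $\psi$ need not supply.

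The paper instead feeds Theorem~\ref{thm:actualresult} into the simultaneous-to-multivariate transference theorem \cite[Theorem~1.2(a)]{bootstrap}. That result upgrades a Khintchine-type theorem in dimension $(1,m)$ to the multivariate statement in every dimension $(n,m)$. The $m>2$ cases come out the same way, as \cite[Theorem~2.3]{bootstrap}, with the known $(1,m)$ results as input.

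This transference is a real ingredient. A general $\psi$ may spread its mass over infinitely many lines $\ZZ\bq_0$, each carrying a convergent sum. Then applying Theorem~\ref{thm:actualresult} line by line, via the measure-preserving map $\balpha\mapsto\bq_0\balpha$, gives nothing. One has to combine the independence of the sets attached to non-parallel $\bq$ with the one-dimensional information in a separate measure-theoretic argument. To complete your proof you must invoke or reprove that step for $m=2$, $n\geq 2$.
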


\begin{remark*}
  The cases where $m=2$ were open for irrational $\bgamma$.
  \th~\ref{thm:actualresult} is the $(n,m) = (1,2)$ case, and the rest
  of the $(n,2)$ cases follow by
  applying~\cite[Theorem~1.2(a)]{bootstrap}
  with~\th~\ref{thm:actualresult} as input, i.e., by setting $m=2$ in
  the proof of~\cite[Theorem~2.3]{bootstrap}, which is the $m>2$
  version of the above theorem. The $m=1, n>1$ counterexample
  statements are immediate consequences of their $(n,m)=(1,1)$
  counterparts: define $\psi$ to be supported on vectors of the form
  $\bq = (q, 0, 0, \dots, 0)$ with $q\in\NN$, and put
  $\psi(\bq) = \psi_{DS}(q)$, where $\psi_{DS}:\NN\to[0,\infty)$ is a
  counterexample function from~\cite{DS,counterexamples}.
\end{remark*}

Historically, the univariate case, where the functions $\psi$ are
constant on spheres in $\ZZ^n$ with respect to the maximum metric, was
studied first. With Theorem~\ref{thm:actualresult} solving the
remaining case, the following theorem is the most general univariate
statement in the inhomogeneous Khintchine--Groshev theory.

\begin{theorem*}[Univariate Inhomogeneous Khintchine--Groshev Theorem]
  Let $m,n\in\NN$ with $mn>1$, let $\bgamma\in \RR^m$, and let
  $\Psi:\NN\to [0,\infty)$. For $\bq\in\ZZ^n\setminus\set{\bzero}$, put
  $\psi(\bq):=\Psi(\abs{\bq})$. Then,
  \begin{equation*}
    \lambda(W_{n,m}^{\bgamma}(\psi)) =
    \begin{cases}
      0 &\textrm{if } \sum_{q=1}^\infty q^{n-1}\Psi(q)^m < \infty, \\[2ex]
      1 &\textrm{if } \sum_{q=1}^\infty q^{n-1}\Psi(q)^m = \infty.
    \end{cases}
  \end{equation*}
  On the other hand, for every $\gamma\in\RR$ there
  exists $\psi:\mathbb{N}\to [0,\infty)$ such that $\sum \psi(q)$
  diverges, while $\lambda(W_{1,1}^\gamma(\psi))=0$.
\end{theorem*}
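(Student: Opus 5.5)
The plan is to reduce the statement to results already recorded in the introduction, splitting according to whether $m>1$ or $m=1$. The only computation needed is a lattice point count: for $k\in\NN$, the number of $\bq\in\ZZ^n$ with $\abs{\bq}=k$ is $(2k+1)^n-(2k-1)^n$. This lies between constants (depending only on $n$) times $k^{n-1}$. Hence for $\psi(\bq)=\Psi(\abs{\bq})$,
\begin{equation*}
  \sum_{\bq\in\ZZ^n\setminus\set{\bzero}}\psi(\bq)^m = \sum_{k=1}^\infty\bigl((2k+1)^n-(2k-1)^n\bigr)\Psi(k)^m \asymp \sum_{k=1}^\infty k^{n-1}\Psi(k)^m ,
\end{equation*}
so the two series converge or diverge together.

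\emph{Convergence part, all $m,n$.} For each $\bq$, the set of $\balpha\in[0,1]^{n\times m}$ with $\abs{\bq\balpha-\bu-\bgamma}<\psi(\bq)$ for some $\bu\in\ZZ^m$ has measure at most $(2\psi(\bq))^m$. This holds because for $\bq\neq\bzero$ the map $\balpha\mapsto\bq\balpha \bmod \ZZ^m$ pushes Lebesgue measure on the torus to Lebesgue measure. By the count above these measures are summable, and the Borel--Cantelli lemma gives measure zero. No monotonicity is needed.

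\emph{Divergence part, $m>1$.} This follows immediately from the Multivariate Inhomogeneous Khintchine--Groshev Theorem stated above, applied to $\psi(\bq)=\Psi(\abs{\bq})$. The comparison displayed above converts divergence of $\sum k^{n-1}\Psi(k)^m$ into divergence of $\sum_{\bq}\psi(\bq)^m$. In particular, the new input \th~\ref{thm:actualresult} enters here, through the $m=2$ cases of the multivariate theorem.

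\emph{Divergence part, $m=1$, so $n\geq 2$.} The multivariate theorem is not available here, and its $m=1$ counterexamples show that it genuinely fails for general $\psi$. So I must use the univariate structure. For $n\geq3$ I would quote Sprind{\v z}uk's univariate theorem~\cite{Sprindzuk}. For $n=2$ the univariate inhomogeneous case $(n,m)=(2,1)$ is Kim's theorem~\cite{Kim}, which extends the second author's result~\cite{Hauke} for non-Liouville $\gamma$; the remaining univariate cases with $n\geq2$, $m=1$ are also covered in~\cite{AR}. This is the step I expect to need the most care. It is not hard, but one must check that each cited result is stated with exactly this normalization, namely $\psi$ constant on max-norm spheres and the series $\sum k^{n-1}\Psi(k)$. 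If a reference uses a different norm, one compares again via the lattice count, since the spheres for different norms are comparable up to constants.

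\emph{The counterexample clause, $(n,m)=(1,1)$.} For $\gamma\in\ZZ$ (equivalently $\gamma=0$, by periodicity) take the Duffin--Schaeffer function~\cite{DS}. For general $\gamma$ take the third author's construction~\cite{counterexamples}. In each case $\sum\psi(q)$ diverges while $\lambda(W^\gamma_{1,1}(\psi))=0$. This completes the proof.
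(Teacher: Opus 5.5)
Your proposal is correct and is essentially the argument the paper has in mind: Borel--Cantelli for convergence, Theorem~\ref{thm:actualresult} (entering through the $m=2$ cases of the multivariate theorem) together with earlier results for $m\geq 2$, Sprind{\v z}uk and Kim for $m=1$, and the Duffin--Schaeffer and Ram{\'i}rez examples for $(1,1)$. The only organizational difference is that you route the univariate $(n,2)$ cases with $n\geq 2$ through the multivariate theorem, and hence through \cite{bootstrap}, whereas these were already known from \cite{AR}.

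One caution concerns your fallback of transferring a cited univariate theorem between norms ``via the lattice count''. This would not work for non-monotone $\Psi$, because a function constant on max-norm spheres is generally not constant on spheres of another norm. So you genuinely need the cited results in the max-norm normalization, which is how the paper also takes them.
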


\subsubsection*{Hausdorff measure statements}

It is a matter of ongoing interest to establish metric dichotomies for
measures other than Lebesgue. The foundational result in this
direction is Jarnik's theorem~\cite{Jarnik}, which is the analogue of
Khintchine's theorem for Hausdorff measures.

Briefly, to a function $f:(0,\infty)\to(0,\infty)$ with
$f(x)\to 0\, (x\to 0)$, one associates a measure $\calH^f$, called the
Hausdorff $f$-measure, that is able to ``see'' sets on scales
determined by $f$. Often, a set of zero Lebesgue measure is best
understood through the use of an appropriate Hausdorff measure. For
example, the standard middle-thirds Cantor set, whose Lebesgue measure
is zero, is seen by the Hausdorff $f$-measure associated to the
function $f(x)=x^s$ where $s=\log 2/\log
3$. (See~\cite{FalconerBook}.)

In 2006, Beresnevich and Velani established the Mass Transference
Principle~\cite{BVmassTP}, a result that relates the Lebesgue measures
of limsup sets of balls to the Hausdorff measures of limsup sets of
suitably dilated balls. Using this, they showed that Khintchine's
theorem actually implies Jarnik's theorem. The Mass Transference
Principle has since become a standard tool for deducing metric
dichotomies for Hausdorff measures from corresponding Lebesgue-measure
statements, and many variants of it have appeared in the
literature. (See~\cite{AllenDaviaud} for an overview.)  One such
variant is~\cite[Theorem~1]{AllenBeresnevich}, a version of the Mass
Transference Principle for systems of linear forms. By applying it, we
deduce the following theorems from the general univariate and
multivariate statements above. The contributions coming
from~\th~\ref{thm:actualresult} are the cases where $m=2$.

\begin{theorem*}[Hausdorff Measure Multivariate Khintchine--Groshev
  Theorem]
    Let $m\geq 2$, and $\bgamma \in \mathbb{R}^m$ arbitrary. Let
    $f:\RR_{>0}\to \RR_{>0}$ and $g:r\to g(r):= r^{-m(n-1)}f(r)$ be
    dimension functions such that $r^{-nm}f(r)$ is monotonic.  Then
    for any function $\psi:\ZZ^n\to [0,\infty)$, we have
    \begin{equation*}
        \calH^f\left(W_{n,m}^{\bgamma}(\psi)\right) =
        \begin{cases}
            0 &\textrm{if } \sum\limits_{\bq\in \ZZ^n} \abs{\bq}^{m} g\parens*{\frac{\psi(\bq)}{\abs{\bq}}} <\infty, \\[2ex]
            \calH^f\left([0,1]^{mn}\right) &\textrm{if } \sum\limits_{\bq\in \ZZ^n} \abs{\bq}^{m} g\parens*{\frac{\psi(\bq)}{\abs{\bq}}} =\infty,
        \end{cases}
    \end{equation*}
    where $\calH^f$ denotes the Hausdorff $f$-measure.
\end{theorem*}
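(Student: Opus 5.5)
The theorem follows by feeding the multivariate Lebesgue statement into the Mass Transference Principle for systems of linear forms, \cite[Theorem~1]{AllenBeresnevich}. The plan has three steps: recast $W_{n,m}^{\bgamma}(\psi)$ as a limsup set, prove the convergence half by a covering argument, and prove the divergence half by applying the Mass Transference Principle to a dilated approximating function.

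\textbf{Setup.} For each $(\bu,\bq)$ with $\bq\neq\bzero$, the set of $\balpha\in[0,1]^{n\times m}$ with $\abs{\bq\balpha-\bu-\bgamma}<\psi(\bq)$ is a thickening of the resonant affine plane
\begin{equation*}
R_{\bq,\bu}=\set{\balpha:\bq\balpha=\bu+\bgamma}.
\end{equation*}
This plane has codimension $m$ in $\RR^{nm}$. The thickening has width comparable to $\psi(\bq)/\abs{\bq}$ in the $m$ transverse directions.

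\textbf{Convergence.} I would cover the thickening of $R_{\bq,\bu}$ by about $(\abs{\bq}/\psi(\bq))^{m(n-1)}$ balls of radius $\psi(\bq)/\abs{\bq}$. For fixed $\bq$ there are $\asymp\abs{\bq}^m$ relevant $\bu$. The total $f$-cost at level $\bq$ is therefore
\begin{equation*}
\ll \abs{\bq}^m\, (\psi(\bq)/\abs{\bq})^{-m(n-1)} f(\psi(\bq)/\abs{\bq}) = \abs{\bq}^m g(\psi(\bq)/\abs{\bq}).
\end{equation*}
Summing over $\bq$, the Hausdorff--Cantelli lemma gives $\calH^f(W)=0$ when the series converges. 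This step needs neither monotonicity nor the Lebesgue theorem.

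\textbf{Divergence.} This is the main content. If $r^{-nm}f(r)\to\infty$ as $r\to0$, then $\calH^f([0,1]^{nm})=\infty$; otherwise $\calH^f$ is comparable to Lebesgue measure.
\begin{itemize}
\item In the comparable-to-Lebesgue case, I would check that the divergence of the $g$-series is equivalent to $\sum\psi(\bq)^m=\infty$. The conclusion then follows from the Multivariate Inhomogeneous Khintchine--Groshev Theorem stated above, which needs $m\ge2$; for $m=2$ this rests on \th~\ref{thm:actualresult} via \cite{bootstrap}.
\item In the infinite case, I would define the dilated approximating function
\begin{equation*}
\theta(\bq)=\abs{\bq}\, g\parens*{\psi(\bq)/\abs{\bq}}^{1/m},
\end{equation*}
so that $\theta(\bq)^m=\abs{\bq}^m g(\psi(\bq)/\abs{\bq})$. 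Divergence of the $g$-series is then exactly $\sum\theta(\bq)^m=\infty$. Applying the Lebesgue theorem to $\theta$ with the same $\bgamma$ gives $\lambda(W_{n,m}^{\bgamma}(\theta))=1$.
\item The linear-forms Mass Transference Principle (\cite[Theorem~1]{AllenBeresnevich}) with planes $R_{\bq,\bu}$, ambient dimension $k=nm$, and plane dimension $l=m(n-1)$ converts this full-measure statement about $\theta$-neighbourhoods into $\calH^f(W_{n,m}^{\bgamma}(\psi))=\calH^f([0,1]^{nm})$.
\end{itemize}

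\textbf{Main obstacle.} The delicate step is matching the hypotheses of \cite[Theorem~1]{AllenBeresnevich}. The resonant sets there are affine, and nothing in that framework requires them to pass through the origin, so the shift by $\bgamma$ should be harmless. The hypotheses to check are that $g$ is a dimension function and that $r^{-k}f(r)$ is monotonic. One must also check the uniform local ubiquity-type condition, namely that the Lebesgue full-measure statement holds for the $\theta$-thickenings inside every ball; this follows by replacing $[0,1]^{nm}$ with an arbitrary ball. A secondary technicality is ensuring $\psi(\bq)/\abs{\bq}\to0$. This can be arranged because $\bq$ for which $\psi(\bq)/\abs{\bq}$ does not tend to zero can be handled separately, as they already make $W$ essentially everything.
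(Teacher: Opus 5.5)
Your proposal is correct and is essentially the paper's argument. The paper deduces this theorem by applying \cite[Theorem~1]{AllenBeresnevich} to the multivariate inhomogeneous Lebesgue statement, whose $m=2$ cases come from Theorem~\ref{thm:actualresult} via \cite{bootstrap}; this is exactly your route through $\theta(\bq)=\abs{\bq}\,g(\psi(\bq)/\abs{\bq})^{1/m}$. One cosmetic slip: your divergence case split omits $r^{-nm}f(r)\to 0$, where $\calH^f$ vanishes on bounded sets and the claim is trivial, and in any case the Mass Transference Principle itself treats all cases uniformly.
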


\begin{theorem*}[Hausdorff Measure Univariate Khintchine--Groshev Theorem]
  Let $mn\geq 2$, and $\bgamma \in \mathbb{R}^m$ arbitrary. Let
  $f:\RR_{>0}\to \RR_{>0}$ and $g:r\to g(r):= r^{-m(n-1)}f(r)$ be
  dimension functions such that $r^{-nm}f(r)$ is monotonic.  Then for
  any function $\Psi:\NN\to [0,\infty)$, we have
    \begin{equation*}
        \calH^f\left(W_{n,m}^{\bgamma}(\psi)\right) =
        \begin{cases}
            0 &\textrm{if } \sum\limits_{q\in \mathbb{N}} q^{n+m-1} g\parens*{\frac{\Psi(q)}{q}} <\infty, \\[2ex]
            \calH^f\left([0,1]^{mn}\right) &\textrm{if } \sum\limits_{q\in \mathbb{N}} q^{n+m-1} g\parens*{\frac{\Psi(q)}{q}} =\infty,
        \end{cases}
    \end{equation*}
    where $\calH^f$ denotes the Hausdorff $f$-measure, and
    $\psi(\bq):=\Psi(\abs{\bq})$.
\end{theorem*}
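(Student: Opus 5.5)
The statement immediately above is the Hausdorff measure univariate Khintchine--Groshev theorem. I would derive it from the Lebesgue-measure univariate statement, together with \th~\ref{thm:actualresult} in the case $(n,m)=(1,2)$, by means of the Mass Transference Principle for systems of linear forms~\cite[Theorem~1]{AllenBeresnevich}. I would not prove it from scratch.

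\emph{Convergence part.} This is a standard covering argument. For each $\bq$ with $\abs{\bq}=q$, the set of $\balpha\in[0,1]^{n\times m}$ satisfying $\abs{\bq\balpha-\bu-\bgamma}<\Psi(q)$ for some $\bu$ is a union of neighbourhoods of affine hyperplanes of codimension $m$. Each neighbourhood has thickness $\asymp \Psi(q)/q$. I would cover it by $\asymp q^{m}(\Psi(q)/q)^{-m(n-1)}$ balls of radius $\Psi(q)/q$. There are $\asymp q^{n-1}$ vectors $\bq$ with $\abs{\bq}=q$. Summing $f(\Psi(q)/q)$ over this cover gives a total of
\[
\asymp q^{n+m-1}(\Psi(q)/q)^{-m(n-1)}f(\Psi(q)/q) = q^{n+m-1}g(\Psi(q)/q).
\]
When the series converges, the tails of this cover have $f$-cost tending to zero, so $\calH^f(W_{n,m}^{\bgamma}(\psi))=0$. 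Monotonicity of $r^{-nm}f(r)$ is only needed to handle the degenerate case where $\Psi(q)/q$ does not tend to $0$, in which $f$ is comparable to $r^{nm}$ and the statement reduces to the Lebesgue one.

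\emph{Divergence part.} I would apply~\cite[Theorem~1]{AllenBeresnevich} to the family of planes $R_{\bq,\bu}=\{\balpha:\bq\balpha=\bu+\bgamma\}$. That result says the following. Suppose that for every ball $B$ in $[0,1]^{mn}$ the set
\[
\Lambda\bigl(\Upsilon\bigr)=\limsup \Delta(R_{\bq,\bu},\Upsilon(\abs{\bq}))
\]
has full Lebesgue measure in $B$, where the thickening function is
\[
\Upsilon(q)=q\,\bigl(g(\Psi(q)/q)\bigr)^{1/m}.
\]
(One should check this normalisation against the paper's convention; the intention is the function satisfying $(\Upsilon(q)/q)^m = g(\Psi(q)/q)$.) Then $\calH^f(\Lambda(\Psi))=\calH^f(B)$, where $\Lambda(\Psi)$ is the corresponding limsup set with thickening $\Psi(\abs{\bq})/\abs{\bq}$. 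The hypothesis is supplied by the univariate Lebesgue theorem with $\Upsilon$ in place of $\Psi$. Its divergence condition is
\[
\sum_q q^{n-1}\Upsilon(q)^m=\sum_q q^{n+m-1}g(\Psi(q)/q)=\infty,
\]
which is exactly our hypothesis. Since that Lebesgue statement carries no monotonicity assumption for $mn>1$, and this is where \th~\ref{thm:actualresult} enters for $(n,m)=(1,2)$, it gives full measure of $W_{n,m}^{\bgamma}$ for $\Upsilon$. Both periodicity and the full-measure conclusion are local, so full measure holds in every ball. The Mass Transference Principle then yields $\calH^f(W_{n,m}^{\bgamma}(\psi))=\calH^f([0,1]^{mn})$.

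\emph{Main obstacle.} The delicate point is verifying the technical hypotheses of~\cite[Theorem~1]{AllenBeresnevich}. These are the requirements that $\Psi(q)/q\to0$ and $\Upsilon(q)/q\to0$, the regularity condition that $r^{-nm}f(r)$ be monotonic (assumed in the statement), and the handling of the case where $f(r)/r^{nm}\to\text{const}>0$. I would first reduce to $\Psi(q)/q\to 0$: otherwise $W_{n,m}^{\bgamma}$ has full Lebesgue measure and $\calH^f$ is comparable to Lebesgue measure. I would then dispose of the case $f(r)/r^{nm}\to\infty$, where $\calH^f([0,1]^{mn})=\infty$ and the MTP applies directly. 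The remaining bookkeeping is routine.
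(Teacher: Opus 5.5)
Your proposal is correct and follows the paper's route: the paper deduces this statement in one line by applying the Allen--Beresnevich mass transference principle for linear forms \cite[Theorem~1]{AllenBeresnevich} to the univariate Lebesgue Khintchine--Groshev theorem, whose $m=2$ cases come from Theorem~\ref{thm:actualresult}, with the auxiliary function $q\,g(\Psi(q)/q)^{1/m}$ as you describe. One small slip: if $\Psi(q)/q\not\to 0$ then $\Psi(q)>1/2$ for infinitely many $q$, so $W_{n,m}^{\bgamma}(\psi)=[0,1]^{mn}$ outright, and you should not justify this case by claiming that $\calH^f$ is comparable to Lebesgue measure, which fails in general.
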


\subsubsection*{Asymptotic notation}

Throughout this paper, we use the notation $f\ll g$ to mean that there
exists a constant $C>0$ such that $f\leq C g$, and we use $g\gg f$
synonymously. We use the notation $f\asymp g$ to mean that $f\ll g$
and $f\gg g$ both hold.

\section{Heuristics and a model problem}\label{sec:heur-model-probl}

In this section, we explain the obstacles to proving
Theorem~\ref{thm:actualresult} and others like it, how those obstacles
were overcome in preceding works that solved the corresponding problem
in higher $n$-by-$m$-dimensions, and why the methods of those
preceding works fail in the $(n,m) = (1,2)$ case.  We also present a
model problem that highlights the key ideas of this manuscript and
serves to give an overview of the steps in the proof
of~\th~\ref{thm:actualresult}. 

 Most of the discussion takes place in the simultaneous
  setting (that is, where $n=1$), where the goal is to simultaneously
  approximate the components of a vector $\balpha\in\RR^m$.

\paragraph*{Quasi-independence on average.}

In all these Khintchine-type questions, one asks about the measure of
$\limsup_{q \to \infty}E_q$ where ${E_q:=}E_q(\psi)$ is a union of
small hypercubes. The first Borel--Cantelli lemma proves the
convergence part of the metric dichotomy immediately, and one is left
with the task of showing that the limsup set has full measure under
the divergence assumption. If we had perfect stochastic independence
of the set system $(E_q)_{q \in \mathbb{N}}$, then the second
Borel--Cantelli Lemma would immediately provide us with the
result. However, the system does not satisfy this strong independence
assumption, and the second Borel--Cantelli lemma is in practice
useless. Instead, there are refined versions of it that guarantee
positive measure of $\limsup_{q\to\infty} E_q$ under weaker
independence assumptions on $(E_q)_{q\in\NN}$. One commonly-used such
refinement---the one we use here---guarantees positive measure as long
as we can show the second moment bound
\begin{equation}\label{QIA}
  \sum_{q \neq r \leq Q}\lambda(E_q \cap E_r) \leq C \left(\sum_{q \leq Q}\lambda(E_q)\right)^2
\end{equation}
for infinitely many $Q \in \NN$, where $C > 0$ is an absolute
constant~\cite[Lemma~2.3]{Harman}.  In applications to Khintchine-type
problems, additional equidistributive properties of the sets $E_q$
allow one to strengthen the conclusion from positive measure to full
measure. Therefore, most of the effort goes toward
establishing~(\ref{QIA}), \emph{quasi-independence on average}.

\paragraph*{Overlap estimates.}

The above mentioned unions of hypercubes are
  \begin{equation}
    \label{eq:Eqdef}
    \begin{aligned}
      E_q&= \set*{\balpha \in [0,1]^m : \abs{q\balpha - \bu - \bgamma} < \psi(q) \textrm{ for some } \bu\in\ZZ^m} \\
         &= \bigcup_{\bu \in \ZZ^m} B\parens*{\frac{\bu+\bgamma}{q}, \frac{\psi(q)}{q}}\cap[0,1]^m
    \end{aligned}
  \end{equation}
  where $B(\bx, r)$ denotes the $\abs{\cdot}$-ball (a hypercube)
centered at $\bx$ having radius $r$.

In order to establish~(\ref{QIA}), it is crucial that we find
sufficiently sharp upper bounds for $\lambda(E_q \cap E_r)$, usually
known as \textit{overlap estimates}. An elementary calculation (see
e.g. \cite[Chapter~3]{Harman}) gives the overlap estimates
\begin{equation}\label{to_lattice_points}
\lambda(E_q \cap E_r) \ll \left(\min\left\{\frac{\psi(q)}{q},\frac{\psi(r)}{r}\right\}\right)^m \cdot \#\underbrace{\left\{(\bu ,\bv) \in \mathbb{Z}_q^m \times \mathbb{Z}_r^m: \left|\frac{\bu + \bgamma}{q} - \frac{\bv+\bgamma}{r}\right| < \frac{X}{qr}\right\}}_{C(q,r)},
\end{equation}
where $X:= X(q,r) = \max\set*{2r\psi(q), 2q\psi(r)}$ and
$\ZZ_q := \ZZ/(q\ZZ)$. The set $C(q,r)$ consists of points
$(\bu,\bv)\in\ZZ^m\times\ZZ^m$ lying in a region of volume $(2X)^m$,
which serves as a crude estimate of $\#C(q,r)$ with an error that is
$O(\gcd(q,r)^m)$.  If $X \geq \gcd(q,r)$, then this gives a bound
$\#C(q,r)\ll X^m$, which shows
$\lambda(E_q \cap E_r) \ll \lambda(E_q)\lambda(E_r)$ and would thus
imply~\eqref{QIA} if $X(q,r)$ were large for all $q,r$.

Once again, this is too much to hope for in practice.
Accepting that $X(q,r)<\gcd(q,r)$ occurs and that the
  $O(\gcd(q,r)^m)$ error is unavoidable, we get
\begin{equation}\label{trivial_overlap}
  \lambda(E_q \cap E_r) \ll \lambda(E_q)\lambda(E_r) + \left(\gcd(q,r)\right)^m \left(\min\left\{\frac{\psi(q)}{q},\frac{\psi(r)}{r}\right\}\right)^m,
\end{equation}
a bound that holds for all pairs
$(q,r) \in \NN \times
  \NN$. Applying~\eqref{trivial_overlap}, we obtain
\begin{equation*}
  \sum_{q,r \leq Q}\lambda(E_q \cap E_r) \ll \left(\sum_{q \leq Q}\lambda(E_q)\right)^2 + \sum_{r,q \leq Q} (\gcd(q,r))^m\parens*{\frac{\psi(q)}{q}}^{m}.
\end{equation*}
For $m>2$, elementary calculations show
\begin{equation}\label{eq:llqm}
  \sum_{r\leq q}(\gcd(q,r))^m\ll q^m,
\end{equation}
which, when combined with the previous expression, leads
to~(\ref{QIA}).
From here we can conclude for $m>2$ that if $\sum\psi(q)^m=\infty$,
then $W^{\bgamma}(\psi)$ has positive measure for every
$\bgamma\in \RR^m$. Full measure comes quickly thereafter, for example by applying of~\cite[Proposition~1]{AR}.
This recovers the nonmonotonic inhomogeneous Khintchine theorem in
dimension $m>2$ (due to Ennola~\cite{Ennola} and also found in
Yu~\cite[Theorem~1.8]{Yu}).

When $m=2$, we do not have~(\ref{eq:llqm}), rather, it comes with a
$\log\log q$ factor attached. The alternate bound
\begin{equation*}
  \sum_{q,r \leq Q}\lambda(E_q \cap E_r) \ll \left(\sum_{q \leq Q}\lambda(E_q)\right)^2 + \sum_{r,q \leq Q} \psi(q)^{m/2}\psi(r)^{m/2}\frac{\gcd(q,r)^m}{(qr)^{m/2}},
\end{equation*}
which is tighter in the terms where $q\psi(r) < r\psi(q)$, leads at
best to
\begin{equation}
  \label{eq:LR}
  \sum_{r,q \leq Q} \psi(q)^{m/2}\psi(r)^{m/2}\frac{\gcd(q,r)^m}{(qr)^{m/2}} \ll (\log \log Q)\sum_{q \leq Q}\psi(q)^m,
\end{equation}
and this is sharp; see Lewko--Radziwi\l\l~\cite{LR}.

Bounds of the strength of~(\ref{eq:LR}) with~(\ref{trivial_overlap})
have been used to prove~\th~\ref{thm:actualresult} under
extra-divergence assumptions (as
in~\cite[Theorem~1.8]{Yu},~\cite[Theorem~3]{AR}), but for arbitrarily
slowly growing $\sum_{q \leq Q}\psi(q)^2$, those bounds are not tight
enough to deduce~\eqref{QIA}.

Therefore, in the pursuit of~\th~\ref{thm:actualresult}, the
estimate~\eqref{trivial_overlap} should be avoided, since it has been
proven too weak to establish quasi-independence on average. Indeed, in
view of the counterexamples of Duffin and Schaeffer~\cite{DS} and the
inhomogeneous generalizations in~\cite{counterexamples} for $m = 1$,
this is not merely a technical obstruction, but possibly a structural
component of the problem: for certain $\psi$ and $\gamma$, the sets
$\set{E_q}_q$ in the one-dimensional problem are actually not
quasi-independent on average.  There is the possibility that this is
still the case for $m = 2$, an eventuality that would constitute an
actual obstruction to proving Theorem \ref{thm:actualresult} by
showing \eqref{QIA}.  A modified strategy presents itself: to find
subsets $E_q'\subset E_q$ having a divergent measure sum and for which
tighter overlap estimates can be established. The goal then becomes
proving \eqref{QIA} for sets $E_q'$ in place of $E_q$.

\paragraph*{Surgery on $E_q$.}

Gallagher's proof of the \textit{homogeneous} version
of~\th~\ref{thm:actualresult} used such a strategy. He defined the
sets $E_q'$ by applying a surgery to the sets $E_q$: excise the
hypercubes associated to non-primitive points, i.e., for $q\in\NN$,
define (from now on, let $m =
  2$)
\begin{equation}\label{eq:Eqprime}
  E_q' := \bigcup_{\bu \in S_q} B\parens*{\frac{\bu }{q}, \frac{\psi(q)}{q}}\cap[0,1]^2,
\end{equation}
where
$S_q := \{\bu = (u_1,u_2) \in \mathbb{Z}_q^2: \gcd(u_1,u_2,q) = 1\}$.
A routine computation shows that $\#S_q \gg q^2$ and therefore
$\lambda(E_q') \gg \lambda(E_q)$. (By contrast, when $m = 1$ the
corresponding surgery leads to a factor of $\varphi(q)/q$ in the new
measure. The monotonicity assumption in Khintchine's theorem ensures
that this new factor will not affect the divergence of the measure
sum, and the difficulty of the Duffin--Schaeffer Conjecture lay in
contending with this new factor for general functions $\psi$.)

The same reasoning that led to~\eqref{to_lattice_points} now gives 
\begin{equation}
\lambda(E_q' \cap E_r') \ll \left(\min\left\{\frac{\psi(q)}{q},\frac{\psi(r)}{r}\right\}\right)^2 \cdot \#\left\{(\bu,\bv) \in S_q \times S_r: \left|\frac{\bu}{q} - \frac{\bv}{r}\right| < \frac{X}{qr} \right\}.
\end{equation}
In the formerly problematic cases where $X(q,r)<\gcd(q,r)$, we now have
\begin{equation*}
  \left\{(\bu,\bv) \in S_q \times S_r: \left|\frac{\bu}{q} - \frac{\bv}{r}\right| < \frac{X}{qr} \right\} = \left\{(\bu,\bv) \in S_q \times S_r: \frac{\bu}{q} =
    \frac{\bv}{r}\right\},
\end{equation*}
which, by the primitivity assumptions $(\bu,\bv) \in S_q\times S_r$,
is empty when $q \neq r$. Therefore, the previous issue vanishes and
one can deduce \eqref{QIA} for $(E_q')_{q \in \mathbb{N}}$.

This approach has been generalized to allow for a \textit{rational}
inhomogeneous parameter $\bgamma = \bA/B = (A_1/B,A_2/B)$ (see e.g.~\cite{BHV,Kim}). Here, one replaces the set
$S_q$ in the definition of $E_q'$ with the set
\begin{equation}\label{rational_adapt}
  \{\bu = (u_1,u_2) \in \mathbb{Z}_q^2: \gcd(u_1B + A_1,u_2B + A_2,Bq) = 1\},
\end{equation}
{and the basic overlap estimates become
\begin{equation}\label{eq:basicestimatesinhom}
  \lambda(E_q' \cap E_r') \ll \left(\min\left\{\frac{\psi(q)}{q},\frac{\psi(r)}{r}\right\}\right)^2 \cdot \#\left\{(\bu,\bv) \in S_q \times S_r: \left|\frac{\bu+\bgamma}{q} - \frac{\bv+\bgamma}{r}\right| < \frac{X}{qr} \right\}.
\end{equation}
The primitivity condition in~(\ref{rational_adapt})} now forbids
solutions $(\bu, \bv)$ to
\begin{equation}\label{eq:nosols}
  \frac{\bu + \bgamma}{q} - \frac{\bv + \bgamma}{r} = 0,
\end{equation}
{so that
\begin{equation}
  \label{eq:atleast}
  \abs*{\frac{\bu+\bgamma}{q} - \frac{\bv+\bgamma}{r}} \geq \frac{\gcd(q,r)}{qrB},\quad (\bu, \bv)\in S_q\times S_r.
\end{equation}
This separation is used to bound the number of solutions counted
in~(\ref{eq:basicestimatesinhom}), leading to}
\begin{align}
\lambda(E_q' \cap E_r') &\ll
\lambda(E_q')\lambda(E_r') + 
    \frac{1}{D^2}\lambda(E_q')\lambda(E_r')\bone_{\brackets*{2D \geq 1/B}} \label{eq:rational_overlap} \\
        &\ll B^2 \cdot \lambda(E_q')\lambda(E_r'). \nonumber
\end{align}
Since $B$ is fixed, it can be absorbed into the implied constant
and \eqref{QIA} follows for $E_q'$.

Clearly, this approach would have to be modified in order for it to
apply in the case of irrational $\bgamma$. One natural modification is
to let the primitivity condition in~(\ref{rational_adapt}) depend on a
sequence $\set{\bA_k/B_k}_k$ of increasingly good rational
approximations to $\bgamma$. Such primitivity conditions were applied
by Schmidt in~\cite{Schmidt}, and also more recently by Chow--Technau
in~\cite{CT}, where they coined the term ``shift-reduced fractions''.
Reproducing the above discussion in this setting leads to
$\ll B_k^2 \lambda(E_q')\lambda(E_r')$. Since $B_k$ grows unboundedly,
the implicit constant does not absorb it, and~(\ref{QIA}) does not
immediately follow. Therefore, the strategy requires other
ingredients.

A recent paper of Kim~\cite{Kim_new} introduces one such ingredient in
the form of an extra assumption on $\psi$. Kim uses a shift-reduction
strategy to prove~(\ref{eq:sum-measure}) under the added assumption
that $\psi(q) = O(q^{-\delta})$ for some $\delta>0$. The effect of
this assumption is to introduce an upper bound on the quantity $X/qr$
which, when combined with his choice of shift-reduction, causes
$C(q,r)$ in~(\ref{eq:basicestimatesinhom}) to be empty when
$\gcd(q,r)> q^{1-\eps}$, for a suitably chosen $\eps>0$. Meanwhile,
the GCD sum~(\ref{eq:llqm}) is bounded $\ll q^2$ when restricted by
the condition $\gcd(q,r)\leq q^{1-\eps}$, and~(\ref{QIA}) follows.

The proof of~\th~\ref{thm:actualresult} also involves a
shift-reduction scheme, but before introducing it, let us describe a
more delicate view of the obstacle arising from small values of
$X(q,r)$.  This view, when combined with our shift-reductions, will
give crucial refinements of~(\ref{eq:rational_overlap}).

\paragraph*{A more delicate view.}

Recall that~(\ref{trivial_overlap}) comes from regarding
  the second term in the estimate
  \begin{equation*}
    \#C(q,r) \ll X^2 + \gcd(q,r)^2
  \end{equation*}
  as unavoidable. It is more accurate to say that it is not
  \emph{always} avoidable, and we profit from remembering that it only
  arises from solutions $(\bu, \bv)$ to
  \begin{equation}\label{eq:nearbycenters}
    \left| \frac{\bu + \bgamma}{q} - \frac{\bv + \bgamma}{r}\right| < \frac{X}{qr}
  \end{equation}
  when $X$ is much smaller than $\gcd(q,r)$. Such solutions are rare,
  and it is possible that for some $q,r$, there are none. Taking this possibility into account, a straightforward calculation
shows
\begin{equation}
  \label{eq:withindicator}
  \lambda(E_q\cap E_r) \ll \lambda(E_q)\lambda(E_r) + 
  \frac{1}{D^2}\lambda(E_q)\lambda(E_r)\bone_{\brackets*{\norm*{\frac{q-r}{\gcd(q,r)}\bgamma}<D}},
\end{equation}
where $D(q,r) := X(q,r)/\gcd(q,r)$ and $\norm{\cdot}$ denotes the
distance to the nearest integer
vector with respect to the maximum norm.

Dropping the indicator variable reproduces~\eqref{trivial_overlap},
which we know is too crude for our purposes in dimension $m = 2$. The
desired advantage from including the indicator comes from the
equidistribution of $(n\bgamma)_{n\in\NN}$ modulo one. A
  natural hope is that $\frac{q-r}{\gcd(q,r)}\bgamma \pmod 1$ sees that
  equidistribution and that the indicator takes a value $\ll D^2$ on
  average. Note however that $D:=D(q,r)$ depends on $\psi(q)$ and
$\psi(r)$ and may therefore behave erratically. Imposing a regularity
condition on $\psi$ (such as monotonic decrease or an
Erd\H{o}s--Vaaler-type condition as in Yu~\cite{Yu}) allows indeed to take advantage of equidistribution. But a general
  treatment requires us to allow for the possibility that the behavior
  of $\psi$ prevents the direct use of equidistribution.

  It turns out that the most decisive obstruction in non-monotonic
  approximations comes from the possibility that the \textit{support}
  of $\psi$ might be quite sparse. The recent paper~\cite{CHPR} formalizes this by producing
  Duffin--Schaeffer-type counterexamples that, unlike the classical
  examples, are monotonic on their support and in fact can have
  prescribed values on their support. This suggests that the
  \textit{regularity} of $\psi$ on its support is more of a technical
  burden, and we are thus motivated to study the following model
  problem which is free from that burden by design.

\paragraph*{A model problem.}

{We make the simplifying assumption that
  $\bgamma = (\gamma,\gamma)$ for some irrational $\gamma\in\RR$.}
Assume a large dyadic range $[T,2T]$ to be given, and let
$\mathcal{M} = \supp \psi \cap [T,2T]$ with
$\lvert \mathcal{M}\rvert = M$, where $M$ is a quantity that does not
depend on $T$. Now for $q\in \calM$, suppose
\begin{equation*}
  \psi(q) = \frac{1}{\sqrt{M}} \quad\textrm{so that}\quad \sum_{T \leq q \leq 2T}\psi(q)^2 = 1.
\end{equation*}
The second term in~(\ref{eq:withindicator}) now takes the form
  \begin{equation*}
    \frac{1}{D^2}\lambda(E_q)\lambda(E_r)\bone_{\brackets*{\norm*{\frac{q-r}{\gcd(q,r)}\bgamma}<D}} = \frac{1}{D^2M^2}\bone_{\brackets*{\norm*{\frac{q-r}{\gcd(q,r)}\gamma}<D}},
  \end{equation*}
  and the goal is to control it on average over the pairs
  $(q,r)\in[T, 2T]$. Note that only the pairs with $D(q,r)<1$ present
  any concern. We aim to show that
\begin{equation}\label{model_question}
  \sup_{\substack{\mathcal{M}\subseteq
      {[T,2T]}\\ |\mathcal{M}| = M }}
  \frac{1}{M^2} \sum_{j \in \mathbb{N}} 2^{2j} \sum_{\substack{q \neq r \in \mathcal{M}\\D(q,r) \leq 2^{-j}}}\bone_{\brackets*{{\norm*{\frac{q-r}{\gcd(q,r)}\gamma}}<2^{-j}}} \ll 1,
\end{equation}
with an implicit constant that neither depends on $T$ nor on $M$.

We make the following crucial observation: If $D(q,r) \leq 2^{-j}$,
then $\max\{\frac{q}{\gcd(q,r)}, \frac{r}{\gcd(q,r)}\} \leq 2^{-j}\sqrt{M}$,
and consequently $\frac{q-r}{\gcd(q,r)} \leq 2^{-j}\sqrt{M}$.  Note that
this is a bound in $M$, and not in $T$, which is key to having any
chance of proving \eqref{model_question}.  We now rewrite
\[
  \sum_{\substack{q\neq r \in \mathcal{M}\\D(q,r) \leq
      2^{-j}}}\bone_{\brackets*{{\norm*{\frac{q-r}{\gcd(q,r)}\gamma}}<2^{-j}}}
  \ll \sum_{\substack{1 \leq h \leq 2^{-j}\sqrt{M}\\ {\lVert
      h\gamma\rVert< 2^{-j}}}}f_j(h)
\]

where
\[f_j(h) := \#\left\{(q,r) \in \mathcal{M}: D(q,r) \leq 2^{-j},
    \frac{q-r}{\gcd(q,r)} = h\right\}.\] For fixed $q \in \mathcal{M}$,
there are at most $2^{-j}\sqrt{M}$ choices for $e:= \frac{r}{\gcd(q,r)}$,
and for fixed $e$, there is at most one $r$ solving the system
\[\frac{q-r}{\gcd(q,r)} = h, \quad \frac{r}{\gcd(q,r)} = e.\]
Thus we obtain $f_j(h) \ll M^{3/2} 2^{-j}$, which gives
    \begin{equation*}
      \sum_{\substack{1 \leq h \leq 2^{-j}\sqrt{M}\\
          \lVert h\gamma\rVert \leq 2^{-j}}}f_j(h) \ll M^{3/2} 2^{-j}\underbrace{\sum_{1 \leq h \leq 2^{-j}\sqrt{M}}\ind{\norm{h\gamma}< 2^{-j}}}_{\calB_j}.
    \end{equation*}
    It is vital to find a good estimate of the cardinality $\calB_j$
    of the Bohr set. Note that by simply dropping the condition
    $ \lVert h \gamma\rVert < 2^{-j}$, we obtain the Bohr set estimate
    $\calB_j \leq 2^{-j}\sqrt{M}$ for $j \ll \log M$ and $\calB_j = 0$
    for $j \gg \log M$. Plugging this bound into the
    {left}-hand side of \eqref{model_question}, gives
    \begin{equation*}
      \sup_{\substack{\mathcal{M}\subseteq [T,2T]\\ |\mathcal{M}| = M }} 
      \frac{1}{M^2} \sum_{j \in \mathbb{N}} 2^{2j} \sum_{\substack{q,r \in \mathcal{M}\\D(q,r) \leq 2^{-j}}}\bone_{\brackets*{\norm*{\frac{q-r}{\gcd(q,r)}\gamma}<2^{-j}}} \ll \sum_{j \ll \log M} 1,
    \end{equation*}
    which fails to prove \eqref{model_question}. 
    
    Evidently, we must
    win back a factor $\calB_j \leq 2^{-j}\sqrt{M}g(j)$ with
    $\sum_{j}g(j) \ll 1$ by making consequential use of the condition
    $\lVert h \gamma\rVert < 2^{-j}$. This can be achieved if $\gamma$
    has finite Diophantine exponent $\tau < \infty$, meaning
      that $\norm{h\gamma}\gg h^{-\tau}$ for all $h\geq 1$. In this
      case, the members of the Bohr set are pairwise separated by
      $\gg 2^{j/\tau}$ and we get
    $\calB_j \ll (2^{-j}\sqrt{M})2^{-j/\tau}$, leading to
    $g(j) := 2^{-j/\tau}$. {This solves the model problem
      for non-Liouville $\gamma$, and suggests} a strategy for proving
    Theorem \ref{thm:actualresult} for non-Liouville parameters
    $\bgamma\in\RR^2$.

    With a more careful analysis, one can use this same strategy to
    solve the model problem for parameters that are ``tamely
    Liouville,'' for example in the sense that
    $\lVert h \gamma \rVert \gg \exp(-\sqrt{h})$. Note however that
    for wildly Liouville $\gamma$, or in the extremal case rational
    numbers, the crude bound $\calB_j \ll 2^{-j}\sqrt{M}$ is all one
    can hope for.

    On the other hand, we have already described a strategy that
      works for rational parameters, namely, to impose the primitivity
      condition~(\ref{rational_adapt}) and thereby avoid the issues
      that arise from small $X(q,r)$. It turns out that the idea for
      irrational parameters of adapting~(\ref{rational_adapt}) to a
      sequence of increasingly good rational approximations becomes
      useful when combined with the more delicate
      estimate~(\ref{eq:withindicator}).

    \paragraph*{Solution to the model problem using shift reductions.}
    
  Continuing with the model problem, we define
  $A/B := A(M)/B(M)$ to be a rational number such that
  \begin{equation}\label{eq:proximity}
    \abs*{\gamma - \frac{A}{B}} < \frac{1}{\sqrt{M}},
  \end{equation}
  with $B$ minimal.  Since $\psi(q) = \frac{1}{\sqrt{M}}$,
  the triangle inequality shows that every solution
    $(\bu, \bv)$ to~(\ref{eq:nearbycenters}) is also a solution to
    \begin{equation}\label{eq:nearbyctrs2}
      \abs*{\frac{\bu + \bA/B}{q} - \frac{\bv + \bA/B}{r}} < \frac{2X}{qr},
    \end{equation}
    where $\bA = (A,A)$. 

  We now define $S_q$ as in \eqref{rational_adapt}. For the same reasons that lead to~\eqref{eq:rational_overlap}, we see that~(\ref{eq:nearbyctrs2}) can only have solutions
  $(\bu, \bv)\in S_q\times S_r$ if $2D(q,r)\geq 1/B$. Incorporating
this into the reasoning that led to~\eqref{eq:rational_overlap} and~(\ref{eq:withindicator}) gives
the refined estimate (see Lemma \ref{lem:overlaps})
  \begin{equation}\label{eq:refinedestimate}
    \lambda(E_q' \cap E_r') \ll \lambda(E_q')\lambda(E_r') + 
    \frac{1}{D^2}\lambda(E_q')\lambda(E_r')\bone_{\brackets*{\norm*{\frac{q-r}{\gcd(q,r)}\gamma}<D}}\bone_{\brackets*{2D \geq 1/B}},
  \end{equation}
  where the sets $E_q'$ are defined by~(\ref{eq:Eqprime}), now with
    the currently relevant, shift-reduced definition of $S_q$. This
  produces a refined version of the model
  problem~\eqref{model_question}, namely, we must show
  \begin{equation}
    \label{model_question2}
    \sup_{\substack{\mathcal{M}\subseteq [T,2T]\\ |\mathcal{M}| = M }} 
    \frac{1}{M^2} \sum_{j \in \mathbb{N}} 2^{2j} \sum_{\substack{q,r \in \mathcal{M}\\D(q,r) \leq 2^{-j}}}\bone_{\brackets*{\norm*{\frac{q-r}{\gcd(q,r)}\gamma}<2^{-j}}}\bone_{\brackets*{2^{j-1} \leq B(M)}} \ll 1.
  \end{equation}
  Importantly, the implied constant must be absolute, and is in
  particular not allowed to depend on $B$, since $B= B(M)$ grows with
  $M$. By following the arguments from above, one sees that
  establishing~(\ref{model_question2}) reduces to showing
  \begin{equation}
    \label{model_question3} 
    \frac{1}{\sqrt{M}} \sum_{\substack{j \in \mathbb{N}\\ 2^{j-1} \leq B}} 2^{j} 
    \sum_{1 \leq h \leq 2^{-j}\sqrt{M}}\ind{\norm*{h\gamma} < 2^{-j}}
    \ll 1.
  \end{equation}
  By~(\ref{eq:proximity}) and the triangle inequality, it suffices to show
  \begin{equation}\label{model_question3_rational} 
    \frac{1}{\sqrt{M}} \sum_{\substack{j \in \mathbb{N}\\ 2^{j-1} \leq B}} 2^{j} 
    \sum_{1 \leq h \leq 2^{-j}\sqrt{M}}\ind{\norm*{h\parens*{\frac{A}{B}}} < 2^{-j+1}}
    \ll 1.
  \end{equation}
  The Bohr set sum is now associated to a \textit{rational} rotation, and
  estimating it is a matter of understanding the distribution of
  rational rotations (compare Lemma \ref{bohrsetbound}).

  If $2^{-j}\sqrt{M} \geq B$, then
  $\set{h(A/B): 1 \leq h \leq 2^{-j}\sqrt{M}}$ is a full rotation
  orbit, and we have
  \begin{equation*}
    \sum_{1 \leq h \leq 2^{-j}\sqrt{M}}\ind{\norm*{h\parens*{\frac{A}{B}}} < 2^{-j+1}}
    \ll (2^{-j+1}B+1)\frac{2^{-j}\sqrt{M}}{B} \ll 2^{-2j}\sqrt{M},
  \end{equation*}
  recalling that in~(\ref{model_question3_rational}) we only consider $j$ for
  which $2^{j-1}\leq B$.
  It follows that
  \begin{equation*}
    \frac{1}{\sqrt{M}} \sum_{\substack{j \in \mathbb{N}\\ 2^{j-1} \leq B\\2^{-j}\sqrt{M} \geq B}} 2^{j} 
    \sum_{1 \leq h \leq 2^{-j}M}\ind{\norm*{h\parens*{\frac{A}{B}}} < 2^{-j+1}}
    \ll  \sum_{\substack{j \in \mathbb{N}\\ 2^{j-1} \leq B}} 2^{-j}
    \ll 1.
  \end{equation*}

  On the other hand, if $2^{-j}\sqrt{M} < B$, then
$\set{h(A/B): 1 \leq h \leq 2^{-j}\sqrt{M}}$ is a partial orbit omitting $0$. We
have
  \begin{equation*}
    \sum_{1 \leq h \leq 2^{-j}\sqrt{M}}\ind{\norm*{h\parens*{\frac{A}{B}}} < 2^{-j+1}}
    \leq \sum_{1 \leq h \leq B-1}\ind{\norm*{h\parens*{\frac{A}{B}}} < 2^{-j+1}} \ll B2^{-j+1}.
  \end{equation*}
  This estimate provides us with
  \begin{equation*}
    \frac{1}{\sqrt{M}}  \sum_{\substack{j \in \mathbb{N}\\ 2^{j-1} \leq B\\2^{-j}\sqrt{M} < B}} 2^{j} 
    \sum_{1 \leq h \leq 2^{-j}\sqrt{M}}\ind{\norm*{h\parens*{\frac{A}{B}}} < 2^{-j+1}}
    \ll
    \frac{B}{\sqrt{M}} \sum_{\substack{j \in \mathbb{N}\\ 2^{j-1} \leq B}} 1 \ll \frac{B\log B}{\sqrt{M}}.
  \end{equation*}
  If there were a fixed parameter $0 < \sigma < 1$ and a guarantee
  that $B \leq M^{\sigma/2}$, then this estimate would lead
  to~(\ref{model_question3_rational}), as desired. Fixing such a
  parameter, we are left only with the case where $B > M^{\sigma/2}$.
  
  In this case, we modify the shift-reduction in a way that can be
  seen as one of the main technical innovations in this article.
  Rather than shift-reducing with respect to $\bA/B$, we shift-reduce
  with respect to $\ba/b$, where $1 \leq b < B$ and
  \begin{equation}\label{approx_b}
    b\left|\gamma - \frac{a}{b}\right| = \norm{b\gamma} \leq \frac{1}{B} < M^{-\sigma/2}
  \end{equation}
  with $\ba = (a,a)$. The rational number $a/b$ is guaranteed by
  Dirichlet's theorem to exist. We now have no solutions $(\bu,\bv)$
  to~(\ref{eq:nosols}) (having replaced $(\bA,B)$ with $(\ba, b)$),
  which implies~(\ref{eq:atleast}) (having made the same
  replacement). Meanwhile, the triangle inequality shows that any
  solution $(\bu, \bv)\in S_q\times S_r$ to~(\ref{eq:nearbycenters}) is also a solution
  to
  \begin{equation*}
    \abs*{\frac{\bu + \ba/b}{q} - \frac{\bv + \ba/b}{r}} < \frac{X}{qr} + \abs*{\frac{(q-r)\norm{b\gamma}}{qrb}}
  \end{equation*}  
  which, when combined with~(\ref{eq:atleast}) (with $\ba,b$ instead
  of $\bA,B$), gives
  \begin{equation*}
    \frac{\gcd(q,r)}{qrb} \leq \frac{X}{qr} + \abs*{\frac{(q-r)\norm{b\gamma}}{qrb}}.
  \end{equation*}
  Multiplying by $qrb/\gcd(q,r)$ gives
  \begin{equation*}
    1 \leq Db + \abs*{\frac{q-r}{\gcd(q,r)}}\norm{b\gamma} \leq Db + D\sqrt{M} \norm{b\gamma}.
  \end{equation*}
  Incorporating this into the reasoning
  that led to~(\ref{eq:refinedestimate}) gives
  \begin{equation}\label{eq:refinedestimate_ab}
    \lambda(E_q' \cap E_r') \ll \lambda(E_q')\lambda(E_r') + 
    \frac{1}{D^2}\lambda(E_q')\lambda(E_r')\bone_{\brackets*{\norm*{\frac{q-r}{\gcd(q,r)}\gamma}<D}}\ind{1 \leq Db + D\sqrt{M}\norm{b\gamma}},
  \end{equation}
  which will lead to a newly refined model problem.

Splitting into the cases $D\sqrt{M} \lVert b \gamma\rVert \leq \frac{1}{2}$
and $D\sqrt{M} \lVert b \gamma\rVert > \frac{1}{2}$ gives
\begin{align*}
  \ind{1 \leq Db + D\sqrt{M}\norm{b\gamma}}
  &\leq \ind{D\sqrt{M} \norm{b\gamma} \leq 1/2}\ind{2D\geq 1/b} + \ind{D\sqrt{M} \norm{b\gamma} > 1/2},
    \intertext{and fixing a small parameter $0 < \rho < \sigma$, we find}
  &\leq \ind{2b > D^{-\rho}} + \ind{D\sqrt{M} \norm{b\gamma} > 1/2}\ind{2b \leq  D^{-\rho}}.
\end{align*}
As before, we stratify over dyadic values of $D$ to arrive at the
model problem we need to solve when $B > M^{\sigma/2}$. It is
\begin{equation}
\label{model_question4}\sup_{\substack{\mathcal{M}\subseteq [T,2T]\\ |\mathcal{M}| = M }} 
\frac{1}{M^2} \parens*{\sum_{\substack{j \in \mathbb{N}\\ 2b > 2^{\rho j}}} 2^{2j} \sum_{\substack{q,r \in \mathcal{M}\\D(q,r) \leq 2^{-j}}}\bone_{\brackets*{\norm*{\frac{q-r}{\gcd(q,r)}\gamma}<2^{-j}}}
+   \sum_{\substack{j \in \mathbb{N}\\ 2b \leq 2^{\rho j}\\
    2^j < \sqrt{M}\lVert b \gamma \rVert
  }} 2^{2j} \sum_{\substack{q,r \in \mathcal{M}\\D(q,r) \leq 2^{-j}}}\bone_{\brackets*{\norm*{\frac{q-r}{\gcd(q,r)}\gamma}}<2^{-j}}}
\ll 1.
\end{equation}
Again, the treatment of this problem requires suitable 
estimates on 
\begin{equation*}
  \sum_{1 \leq h \leq 2^{-j}\sqrt{M}}\ind{\norm*{h\gamma} < 2^{-j}}.
\end{equation*}

Depending on the case, we employ different bounds for the Bohr sets in
the discussion (compare \th\ref{adhocbohrsetbounds}): Decomposing $h = mb + s$ and using \eqref{approx_b},
we note that
\begin{equation}\label{decomp_heur}
  \#\{1 \leq h \leq 2^{-j}\sqrt{M}: \lVert h \gamma\rVert \leq 2^{-j}\}
  \leq \sum_{1 \leq m \leq \frac{2^{-j}\sqrt{M}}{b}}\sum_{1 \leq s \leq b} \ind{\norm*{s\parens*{\frac{a}{b}} + mb\gamma} \leq (2^{-j} + M^{-\sigma/2})}.
\end{equation}
In the case $2b>2^{\rho j}$, we bound this further from above by 
\begin{equation*}
    \begin{split}
      \frac{2^{-j}\sqrt{M}}{b} \underbrace{\sup_{\beta \in \mathbb{R}} \#\{1 \leq s \leq b: \norm*{s\parens*{\frac{a}{b}} + \beta} \leq (2^{-j} + M^{-\sigma/2})\}}_{\ll b(2^{-j} + M^{-\sigma/2}) + 1}\ll 2^{-2j}\sqrt{M} + 2^{-j}M^{(1-\sigma)/2} + \frac{2^{-j}\sqrt{M}}{b}.
   \end{split}
\end{equation*}
This yields that 
\begin{equation*}
  \frac{1}{\sqrt{M}} \sum_{\substack{j \in \mathbb{N}\\ 2^j \leq
      \sqrt{M}\\ 2b > 2^{\rho j}}} 2^{j} \sum_{\substack{1 \leq h \leq
      2^{-j}M\\ \lVert h\gamma\rVert \leq 2^{-j}}} 1 \ll 1 +
  \sum_{\substack{j \in \mathbb{N}\\ 2^j \leq \sqrt{M}}}M^{-\sigma/2}
  + \sum_{\substack{j \in \mathbb{N}\\ b > 2^{\rho j}}}\frac{1}{b} \ll
  1 + \frac{\log M}{M^{\sigma/2}} + \frac{\log b}{b}\ll 1,
\end{equation*}
bounding the first sum in~(\ref{model_question4}).  If
$b \leq 2^{\rho j}$ and $2^{-j}\sqrt{M}\lVert b \gamma \rVert > 1/2$, then we bound \eqref{decomp_heur} using spacing
arguments (compare Lemma \ref{lem:oncearound}) by
\begin{equation}
    \begin{split}
&\leq \sum_{1 \leq s \leq b} \left(\frac{\sqrt{M}2^{-j} \lVert b \gamma\rVert}{b} + 1\right) \sup_{\beta \in \mathbb{R}}
\#\left\{1 \leq n \leq \frac{1}{\lVert b \gamma \rVert}: \lVert n\cdot (b\gamma)+ \beta \rVert \leq (2^{-j} + M^{-\sigma/2})\right\}
\\&\ll \left(\sqrt{M}2^{-j} \lVert b \gamma\rVert + b\right) 
\left({\frac{(2^{-j} + M^{-\sigma/2})}{\lVert b \gamma \rVert }}+1\right).
   \end{split}
\end{equation}
Since $\lVert b \gamma \rVert < \frac{1}{B} < M^{-\sigma/2}$ and
$\frac{1}{\lVert b\gamma \rVert} \ll \sqrt{M}2^{-j}$, the above is bounded by
$b2^{-2j}\sqrt{M} + 2^{-j}bM^{(1-\sigma)/2}$, and we get
\begin{equation*}
  \frac{1}{\sqrt{M}} \sum_{\substack{j \in \mathbb{N}\\ 2b \leq 2^{\rho j}\\
    2^j < \sqrt{M}\lVert b \gamma \rVert
  }} 2^{j} \sum_{\substack{1 \leq h \leq 2^{-j}M\\ \lVert
      h\gamma\rVert \leq 2^{-j}}} 1 \ll \sum_{\substack{j \in
      \mathbb{N}\\ 2^j < \sqrt{M}\\b \leq 2^{\rho j}}}b M^{-\sigma/2} + \sum_{\substack{j
      \in \mathbb{N}\\b \leq 2^{\rho j}}}b2^{-j} \ll M^{\rho/2 - \sigma/2} +
  \sum_{\substack{j \in \mathbb{N}}}2^{-j(1 - \rho)} \ll 1.
\end{equation*}
This finally covers all the cases, giving an affirmative answer to the
refined model problems of \eqref{model_question3} in the cases
$B\leq M^{\sigma/2}$ and \eqref{model_question4} in the cases
$B> M^{\sigma/2}$.

\paragraph*{In summary of the solution of the model problem.}

The treatment of the model problem can be summarized as follows. Once
we have shift-reduced, we obtain the overlap estimate
\begin{equation*}
  \lambda(E_q' \cap E_r') \ll \lambda(E_q')\lambda(E_r') + 
  \frac{1}{D^2}\lambda(E_q')\lambda(E_r')\bone_{\brackets*{\norm*{\frac{q-r}{\gcd(q,r)}\gamma}<D}}\bone_{\brackets*{2D
      \geq 1/B}},
\end{equation*}
which holds regardless of $\gamma$.

If $\gamma$ were rational, then we could drop the first indicator
$\bone_{\brackets*{\norm*{\frac{q-r}{\gcd(q,r)}\gamma}<D}}$, and this
would lead to $\ll B^2 \lambda(E_q')\lambda(E_r')$ as discussed. But
this would not work for irrational $\gamma$.

If $\gamma$ is a non-Liouville irrational, then one can drop
$\bone_{\brackets*{2D \geq 1/B}}$ instead, and a sufficient amount of
saving comes from the rate of equidistribution of $q\gamma \pmod 1$,
as described. In fact, this strategy is viable even without
shift-reductions. But it does not work for Liouville $\gamma$, even
with the shift-reductions.

For Liouville $\gamma$, we cannot drop either of the indicators, but
instead must use both simultaneously. Each indicator has a different
regime where---through the fine-tuning of the shift-reduction
scheme---it unfolds its full power. The different regimes cover the
full range of possibilities, and, in particular, this strategy works
equally well for non-Liouville $\gamma$ as it does for Liouville
$\gamma$. Indeed, if $\gamma$ has Diophantine type
$1\leq\tau < \infty$, then one has
\begin{equation}\label{eq:instructive}
  M^{\frac{1}{2}\parens*{\frac{1}{1+\tau}}}\ll B \ll   M^{\frac{1}{2}\parens*{\frac{\tau}{1+\tau}}}.
\end{equation}
If we select $\tau/(1+\tau) < \sigma < 1$, then the model problem can
be solved by shift-reducing with respect to $A/B$, for all $M$; If we
select $0 < \sigma < 1/(1+\tau)$, it can be solved by shift-reducing
with respect to $a/b$, for all $M$, thus actually both approaches
work. A Liouville parameter $\gamma$, on the other hand, has
Diophantine type $\tau=\infty$, so we lose~(\ref{eq:instructive}), and
the successful shift-reduction is forced to depend on $M$, regardless
of the choice of $\sigma$.  We select a fixed value for
$\sigma\in (0,1)$, that can be chosen arbitrarily close to
$1$. Whenever $B \leq M^{\sigma/2}$ does not hold, there exists
$b\leq M^{(1-\sigma)/2}$ such that
$\lVert b\gamma \rVert \leq M^{-\sigma/2}$ which after
shift-reduction, results in an indicator
\begin{equation*}
  \ind{1 \leq Db + D\sqrt{M}\norm{b\gamma}}\leq \bone_{\brackets*{D
      \geq M^{(\sigma-1)/2}}}
\end{equation*}
that is very powerful, and sufficient for establishing the result.

\paragraph*{Beyond the model problem.}

Of course, the model problem is a much simplified version of the main
problem behind~\th~\ref{thm:actualresult}, and several complications
arise once we exit its confines. First, we cannot in general assume
that $\psi$ is constant on dyadic ranges in its support; second, we
cannot assume that the inhomogeneous parameter takes the form
$\bgamma=(\gamma, \gamma)$ for some $\gamma\in \RR$; and third, it is
not sufficient to only bound the overlaps $\lambda(E_q\cap E_r)$ for
pairs $q,r$ lying in dyadic ranges $[T,2T]$.

\

Since $\psi$ is not necessarily constant on dyadic ranges on its
support, the definition of $B$ must depend on $\psi(q)$ (and not only
on $\#\mathcal{M}= M$). Consequently, the shift
  reductions defining $E_q'$ and $E_r'$ are generally with respect to
  different rationals $\bA_q/B_q$ and $\bA_r/B_r$, even for $q,r$ in
  the same dyadic range.  {The above discussion's
  definitions are adapted in~(\ref{eq:Bkbound}) and~(\ref{eq:bk}), and
  the shift-reduction becomes~\eqref{shift_reduced_sets}}---the reader
will note a distinction
\begin{equation*}
  B_k \leq 2^{\sigma k}\quad\textrm{versus}\quad
  B_k >2^{\sigma k}.
\end{equation*}
This is analogous to the similar distinction made in the model
problem, and leads to estimates analogous to those for solving model
problems~\eqref{model_question2} and~\eqref{model_question3},
respectively.  A subtlety that arises from the dependence of
$\bA_q/B_q$ on $\psi(q)$ is that for $r < q$, it might happen that
$B_r \neq B_q$ and that the estimates~(\ref{eq:basicestimatesinhom})
count solutions $(\bu, \bv)$ to
\begin{equation*}
  \frac{B_r\bu+\bA_r}{B_rq} - \frac{B_r\bv+\bA_r}{B_rr} = 0,\quad  (\bu,\bv) \in S_q \times S_r.
\end{equation*}
By the definition of $S_r$, the latter rational vector is in reduced
form, but the former might not be. Thus the above may produce solutions
when $q$ is an integer multiple of $r$. However, it turns out that
pairs with $r \mid q$ can be estimated by the trivial estimate
\eqref{trivial_overlap}, and the corresponding GCD-sums give an
acceptable contribution.

\

The model problem's assumption that $\bgamma = (\gamma, \gamma)$ had
the effect of reducing much of the discussion to the one-dimensional
Diophantine approximation of $\gamma\in \RR$. In the genuinely
two-dimensional case $\bgamma=(\gamma_1, \gamma_2)$, one sees two main
differences. First, the rational vectors $\ba/b$ and $\bA/B$ now arise
as simultaneous good approximations $(a_1/b,a_2/b)$, $(A_1/B,A_2/B)$
to $(\gamma_1,\gamma_2)$---see \eqref{eq:Bkbound} and
\eqref{eq:bk}. As in the model problem, $\bA/B$ is a ``best
approximation of the first kind,'' while $\ba/b$ comes from an
application of (the simultaneous version of) Dirichlet's Theorem. Of
course, in the two-dimensional setting we have $\norm{b\bgamma} \leq B^{-1/2}$
instead of $\norm{b\gamma} \leq B^{-1}$. The second difference is that the Bohr set
estimates, which in the model problem required only the understanding
of rotation orbits in the circle, now require the consideration of
orbits of toral translations in dimension two.

Fortunately, the strategy described above has enough room to
accommodate both of these changes---the lower approximation exponent
for $\ba/b$ and the higher-dimensional toral translations. The Bohr
set estimates are carried out in Section~\ref{sec:bohr-set-bounds}. In
fact, one sees that the worst-case scenarios were already present for
diagonal parameters $\bgamma=(\gamma, \gamma)$, and the genuinely
two-dimensional parameters do not cause much additional trouble.

\

Finally, since we cannot assume that
$\sum_{n \in [T,2T]}\psi(n)^2 \geq 1$ for infinitely many $T$, it is
not enough to only bound overlaps for pairs $q,r$ with $q \asymp
r$. We also must treat pairs lying in different dyadic ranges,
$q \in [Q, 2Q]$ and $r\in [R,2R]$, possibly with $Q/R$ being large. It turns
out that we need to win some extra factor in terms of $Q/R$, since
otherwise, the dyadic approach considered here loses too much. This
gain comes for different reasons in different cases, depending on the
relative sizes of $\psi(r), \psi(q)$, but can be explained by the
effect that the maximum condition in the definition of $D(q,r)$ (which
helps in this case since the ``enemy'' is $D$ being small) has. It
makes asymmetric situations easier to handle, and allows for the
desired additional gain (it is actually almost linear in $Q/R$). A
final inconvenience arises in this setup since replacing $\bgamma$
with its rational approximation $\bA_q/B_q$ is no longer
straightforwardly possible: For $Q \asymp R$, the approximation error
was bounded by $D$, which can be compensated by replacing $D$ in the
Bohr sets with $2D$. In the new setup, the error is in fact
$\frac{Q}{R}D$ (to be later denoted by $\widehat{D}$). This issue is
treated by yet another case distinction, depending on whether
$\frac{Q}{R}D < D^{\tau}$ or not, where $0 < \tau < 1$ is yet another
parameter for us to choose; If $\frac{Q}{R}D < D^{\tau}$, then we
simply increase the window of the Bohr set to $D^{\tau}$, and the
estimates presented above still have enough flexibility to accommodate
this additional loss in the exponent. If $\frac{Q}{R}D > D^{\tau}$,
then this implies $D > (R/Q)^{1/(1-\tau)}$, and we can only have
$\ll \log Q/R$ many dyadic ranges for such $D$. This factor is
overcompensated by the almost linear gain $R/Q$ discussed before.

\

\paragraph*{Remarks on linear forms.}

We end this section with some remarks on the $n$-by-$m$ linear forms
cases, where $n\geq 2$. Recall that the initial difficulty in the
pursuit of~\th~\ref{thm:actualresult} is that the
bound~(\ref{eq:llqm}) on GCD sums fails for $m=2$. On the other hand,
when $m\geq 3$, that bound together with the trivial overlap
estimates~(\ref{trivial_overlap}) gives quasi-independence on average
without having to study the Diophantine properties of the
inhomogeneous parameter $\bgamma$---the problem reduces to
measure-theoretic considerations. Similar phenomena occur in all
dimensions $(n,m)$ other than $(2,1)$ or $(1,2)$. In the results of Sprind{\v
  z}uk~\cite{Sprindzuk}, Beresnevich--Velani~\cite{BVKG}, and
of the first and third authors~\cite{AR}, one sees measure-theoretic arguments
that are facilitated in large part by controlling the average behavior
of quantities involving elementary arithmetic functions, and not
because of any precise study of Diophantine properties of
parameters. Indeed, the classical theorems, with their monotonicity
assumptions, are proved similarly. In~\cite{bootstrap} it is shown
that, for measure-theoretic and geometric reasons, every
Khintchine-type problem that can be solved in dimension $(1,m)$ can
automatically be solved multivariately in dimension $(n,m)$ for all
$n\geq 1$. (In particular, the
$(1,2)$-dimensional~\th~\ref{thm:actualresult} implies the $(n,2)$
cases of the general multivariate Khintchine--Groshev theorem stated
above.)

The dual case $(n,m) = (2,1)$ is different in this regard. It had a
similar issue arising from divergent GCD sums, causing trivial overlap
estimates to be insufficient for quasi-independence on average. There,
a comparable, but much more straightforward surgery of the sets was
applied: Since the measure sum in that case reads
$\sum_{q}q\psi(q) = \infty$, one can assume without loss of generality
that $\psi(q) \leq 1/q$. This allowed for a better control over the
size of $D$ in terms of $q$ (and not only in $\psi(q)$), which then
allowed for the application of (slightly tilted) GCD sum bounds (see
the papers~\cite{Hauke,Kim}). (This is similar to the advantage gained
in Kim's recent paper~\cite{Kim_new} where he makes the assumption of
$\psi(q)=O(q^{-\delta})$, as mentioned above.)  This settled the case
$(n,m) = (2,1)$, without having to appeal to equidistributional
properties of $(q\gamma)_{q \in \mathbb{N}}$, and the complicated case
distinction that appears to be necessary in the final case
$(n,m) = (1,2)$ considered in this article.

\section{Preliminaries}

The aim of this section is to set the notation, introduce the suitable shift-reduction, and reduce the question indeed to the problem described
above in establishing quasi-independence on average.  Given
$\psi:\NN\to [0,\infty)$ and $\bgamma\in\RR^2$, define
\begin{equation*}
  E_q= E_q(\psi) = \set*{\balpha \in [0,1]^2 : \norm{q\balpha - \bgamma} < \psi(q)},
\end{equation*}
and note that this definition coincides with~(\ref{eq:Eqdef}). Then
$W^{\bgamma}(\psi) = \limsup_{q\to\infty} E_q$, and this
characterization allows us to treat the convergence case in Theorem
\ref{thm:actualresult} immediately.

In fact, the following proposition allows us to focus our attention on
the divergence case, with functions $\psi$ that approach $0$ and
irrational inhomogeneous parameters.
  
\begin{proposition}\th\label{prop:wlog}
  If
  \begin{enumerate}
  \item $\sum_{q \in \mathbb{N}} \psi(q)^2$ converges, or
  \item $\limsup_{q \to \infty} \psi(q) > 0$, or
  \item $\bgamma\in\QQ^2$,
  \end{enumerate}
  then~\th\ref{thm:actualresult} holds.
\end{proposition}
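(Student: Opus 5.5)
The plan is to treat the three cases separately; each reduces quickly to a known argument.

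\emph{Case 1: convergence.} Since $W^{\bgamma}(\psi)=\limsup_{q\to\infty}E_q$ and
\begin{equation*}
  \lambda(E_q)\leq (2\psi(q))^2,
\end{equation*}
the first Borel--Cantelli lemma gives $\lambda(W^{\bgamma}(\psi))=0$. This holds because $E_q$ is, modulo $\ZZ^2$, a union of $q^2$ boxes of side $2\psi(q)/q$. When $\psi(q)\geq 1/2$ we have $E_q=[0,1]^2$, but such $q$ occur only finitely often, since $\sum\psi(q)^2<\infty$.

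\emph{Case 2: $\limsup_{q\to\infty}\psi(q)>0$.} Then $\sum\psi(q)^2=\infty$, so we must prove full measure. Choose $\eps>0$ and an infinite set $\calQ\subset\NN$ with $\psi(q)>\eps$ for all $q\in\calQ$. It suffices to show that
\begin{equation*}
  \set{\balpha\in[0,1]^2:\norm{q\balpha-\bgamma}<\eps\ \text{for infinitely many } q\in\calQ}
\end{equation*}
has full measure. I would get this from a Weyl-type equidistribution statement. By Weyl's criterion, for almost every $\balpha$ the sequence $(q\balpha)_{q\in\calQ}$ is equidistributed modulo $1$ along the increasing enumeration of $\calQ$; the standard proof is a second-moment computation of exponential sums over $\calQ$ (Weyl's theorem for distinct integer sequences). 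For such $\balpha$, the sequence $q\balpha$ enters the box of radius $\eps$ around $\bgamma$ modulo $\ZZ^2$ with positive frequency, hence infinitely often. Alternatively, one can argue as follows: for fixed distinct $q,r$,
\begin{equation*}
  \lambda(E_q\cap E_r)=\lambda(E_q)\lambda(E_r),
\end{equation*}
where now $E_q$ is defined with constant radius $\eps$. This is an exact Fourier identity, because the set $\set{\balpha:\norm{q\balpha-\bgamma}<\eps}$ has Fourier support on $q\ZZ^2$, and for $q\ne r$ the only common nonzero frequencies would need $q\bx=r\by$. That last point still requires care, so I would default to the Weyl argument. Pairwise-independence-type bounds then yield \eqref{QIA}, hence positive measure. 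Upgrading to full measure follows from the zero--one law of \cite[Proposition~1]{AR}, which the paper already cites for this purpose.

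\emph{Case 3: $\bgamma=\bA/B\in\QQ^2$.} Assume $\sum\psi(q)^2=\infty$; by Case 2 we may also assume $\psi\to 0$. Use the shift-reduced sets $E_q'$ with $S_q$ as in \eqref{rational_adapt}. Two estimates are needed:
\begin{equation*}
  \lambda(E_q')\gg_B\lambda(E_q)
  \quad\text{and}\quad
  \lambda(E_q'\cap E_r')\ll B^2\lambda(E_q')\lambda(E_r')
\end{equation*}
for $q\ne r$, as in \eqref{eq:rational_overlap}. The first is a count of primitive residues. The second uses the separation \eqref{eq:atleast}, which comes from the absence of solutions to \eqref{eq:nosols} under the primitivity condition, together with the lattice-point count \eqref{eq:basicestimatesinhom}. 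Then \eqref{QIA} holds with $C\ll B^2$, giving positive measure, and \cite[Proposition~1]{AR} upgrades this to full measure. Alternatively, one may simply cite the rational case from \cite{Kim} or \cite{BHV}.

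\emph{Main obstacle.} The delicate step is Case 3, specifically the claim that \eqref{eq:nosols} has no solutions with $(\bu,\bv)\in S_q\times S_r$. The fractions $(B\bu+\bA)/(Bq)$ are in lowest terms, but if $\gcd(B,q)>1$ a shared denominator could still allow equality for some $q\ne r$. I would handle this by restricting to $q$ in a fixed residue class coprime-friendly to $B$, or by dyadic thinning that preserves divergence, or else fall back on citing \cite{Kim}.
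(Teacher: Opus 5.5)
Your proof is correct. Case 1 is the paper's Borel--Cantelli argument, and your fallback in Case 3, citing \cite{Kim}, is exactly what the paper does. The real difference is in Case 2.

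\textbf{Case 2.} The paper replaces $\psi$ by $\delta\bone_{\calQ}$. It uses the equidistribution of the grid $\set{\bu/q}$ to get $\lambda(E_q\cap U)\geq 2\delta^2\lambda(U)$ for every ball $U$ and all large $q\in\calQ$. It then passes to the limsup via $\lambda(\limsup_q E_q\cap U)\geq\limsup_q\lambda(E_q\cap U)$ and concludes with the Lebesgue density theorem.

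You instead use the metric Weyl theorem. For $\mathbf h\in\ZZ^2\setminus\set{\bzero}$ one has $\int_{\TT^2}\abs{N^{-1}\sum_{n\leq N}e^{2\pi i q_n\mathbf h\cdot\balpha}}^2\,d\balpha=1/N$. Passing along $N=k^2$ and interpolating, almost every $\balpha$ makes $(q\balpha)_{q\in\calQ}$ equidistributed in $\TT^2$. Such a sequence visits the $\eps$-box about $\bgamma$ with frequency $(2\eps)^2$ (for $\eps<1/2$).

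This route is rigorous, gives full measure directly, and even yields an asymptotic visit count. The paper's route avoids exponential sums and rests on the same kind of local equidistribution that is later fed into Proposition~\ref{full_meas}. Two consequences for your write-up:
\begin{itemize}
\item Your sentence about pairwise independence, \eqref{QIA} and the zero--one law is unnecessary in Case 2.
\item The Fourier aside should be dropped, not repaired. The identity $\lambda(E_q\cap E_r)=\lambda(E_q)\lambda(E_r)$ is false, because the supports $q\ZZ^2$ and $r\ZZ^2$ share $\mathrm{lcm}(q,r)\ZZ^2\setminus\set{\bzero}$. For example, $\bgamma=\bzero$, $r=1$, $q=2$ gives $\lambda(E_q\cap E_r)=\eps^2$ against $\lambda(E_q)\lambda(E_r)=16\eps^4$.
\end{itemize}

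\textbf{Case 3.} Your direct shift-reduction argument also works, and the obstacle you flag does not exist. Suppose $\gcd(B\bu+\bA,Bq)=1$ and $d(B\bu+\bA)/(Bq)\in\ZZ^2$. Then $Bq$ divides $d(Bu_1+A_1)$, $d(Bu_2+A_2)$ and $dBq$, hence divides $d\gcd(B\bu+\bA,Bq)=d$. So $Bq$ is the exact denominator of $(\bu+\bgamma)/q$.

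Two such vectors can coincide only if $Bq=Br$, i.e.\ $q=r$, whatever $\gcd(B,q)$ is. Hence \eqref{eq:nosols} has no solutions in $S_q\times S_r$ for $q\neq r$, and \eqref{eq:atleast} follows with no thinning of residue classes. In the paper, solutions when $r\mid q$ arise only because $q$ and $r$ are shift-reduced with respect to different rationals $\bA_k/B_k$ and $\bA_\ell/B_\ell$.

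The upgrade from positive to full measure can then be done as you say, or via Proposition~\ref{full_meas} together with the equidistribution argument of Lemma~\ref{lem:equid}(ii).
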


\begin{proof}
  The first assertion is a direct application of the Borel--Cantelli
  lemma. From~(\ref{eq:Eqdef}) we obtain $\lambda(E_q) \leq 4\psi(q)^2$,
  and by the convergence assumption, we have
  $\sum_{q \in \mathbb{N}} \lambda(E_q) < \infty$. Thus, the convergence
  Borel--Cantelli Lemma proves the claim.

  For the second assertion, assume
  $\limsup_{q \to \infty} \psi(q) > 0$. Then there exists a sequence
  $\mathcal{Q} = (q_i)_{i \in \mathbb{N}}$ of increasing integers such
  that $\psi(q_k) > \delta$ for some $0 < \delta < \frac{1}{2}$. We
  now define
  \begin{equation*}
    \tilde{\psi}(q)
    :=
    \begin{cases}
      \delta \text{ if } q \in \mathcal{Q},\\
      0 \text{ otherwise. }
    \end{cases}
  \end{equation*}
  Note that we still have
  $\sum_{q \in \mathbb{N}} \tilde{\psi}(q) = \infty$, and since
  $\tilde{\psi} (q) \leq \psi(q)$ for all sufficiently large $q$, we
  have $W^{\gamma}(\tilde{\psi}) \subseteq W^{\gamma}(\psi)$. It
  therefore suffices to show $\lambda(W^{\gamma}(\tilde{\psi})) = 1$.
  Let $U\subset [0,1]^2$ be an open ball. With
  \begin{equation*}
    E_q := \bigcup_{\bu \in \mathbb{Z}_q^2} B\parens*{\frac{\bu + \bgamma}{q},
      \frac{\tilde{\psi}(q)}{q}}\cap[0,1]^2
  \end{equation*}
  and the equidistribution of $\{\bu/q: \bu \in \mathbb{Z}_q^2\}$ in
  $[0,1)^2$, we immediately obtain
  \begin{equation*}
    \lambda(E_q \cap U) \geq \frac{1}{2}\lambda(E_q)\lambda(U)\geq
    2\delta^2\lambda(U)
  \end{equation*}
  for all sufficiently large $q\in\calQ$. Therefore,
  \begin{equation*}
    \lambda(\limsup_{q\to\infty} E_q\cap U) = \lambda(W(\tilde\psi)\cap U) \geq 2\delta^2\lambda(U).
  \end{equation*}
  Since this holds for all open balls $U$, the Lebesgue density
  theorem gives $\lambda(W(\tilde\psi))=1$ as desired.

  The third assertion is a result of Kim~\cite[Theorem~2]{Kim}.
\end{proof}

In view of~\th~\ref{prop:wlog} we will assume for the rest of this
article the following:
\begin{align}
    \label{divergent}&\sum_{q \in \mathbb{N}} \psi(q)^2 = \infty,\\
    \label{psi_to_0}&\lim_{q \to \infty}\psi(q) = 0,\\
    \label{irrational}&\bgamma \notin \mathbb{Q}^2.
\end{align}
Furthermore, by modifying the value of $\psi(q)$ for at most finitely
many $q\in\NN$, we may further assume that $\psi:\NN\to[0,1/2]$.

\subsection{Shift reduction}\label{sec:shift-reduction}

Let $\bgamma\in\RR^2 \setminus{\mathbb{Q}^2}$ be fixed. For $k\geq 0$, let $B_k \geq 1$ be the smallest
integer such that there exists $\bA_k\in \ZZ^2$ with
\begin{equation}\label{eq:Bkbound}
  \abs*{\bgamma - \frac{\bA_k}{B_k}} = \frac{\norm{B_k\bgamma}}{B_k}< 2^{-k}.
\end{equation}
By minimality of $B_k$, we have that $\gcd(\bA_k, B_k)=1$. If
$B_k\geq 2$, let $1 \leq b_k < B_k$ be minimal such that there exists
$\ba_k\in\ZZ^2$ with
\begin{equation}\label{eq:bk}
  \abs*{\bgamma - \frac{\ba_k}{b_k}} = \frac{\norm{b_k\bgamma}}{b_k} \leq \frac{1}{b_k B_k^{1/2}},
\end{equation}
as furnished by Dirichlet's theorem.  

The inequality
\begin{equation}\label{eq:bkbound}
  b_k \leq   \frac{2^k}{B_k^{1/2}},
\end{equation}
follows from the minimality of $B_k$. Again, we have
$\gcd(\ba_k, b_k)=1$.

To $\psi:\NN\to [0, 1/2]$ and $0 < \sigma < 1$ we associate the
following shift-reduced rational points. For $q$ such that
$2^{-k} < \psi(q) \leq 2^{-k+1}$, define
\begin{equation}\label{shift_reduced_sets}
  S_q =
  \begin{cases}
    \set*{\bu \in (\ZZ/q\ZZ)^2 : \gcd(B_k\bu + \bA_k, B_kq) =1} &\textrm{if } B_k\leq 2^{\sigma k},\\
    \set*{\bu \in (\ZZ/q\ZZ)^2 : \gcd(b_k\bu + \ba_k, b_kq) =1} &\textrm{if } B_k> 2^{\sigma k}. 
  \end{cases}
\end{equation}
We remark here that since $\bgamma \notin \mathbb{Q}^2$, we have that
$\lim_{k \to \infty} B_k = \infty$,
and thus in particular, $B_k \geq 2$ for $k \geq K_0 = K_0(\bgamma)$. Since
we may assume that $\lim_{q \to \infty} \psi(q) = 0$, we may assume
without loss of generality that $\psi(q) < 2^{-K_0 +1}$ for all
$q \in \mathbb{N}$, which makes $b_k$ and consequently
\eqref{shift_reduced_sets} well-defined.

\subsection{Reduction to a key lemma}\label{sec:reduction-to-key-lemma}

In this subsection, we prove Theorem~\ref{thm:actualresult} assuming
Lemma~\ref{lem:logtolerance_} given below. \th~\ref{lem:logtolerance_}
is the key to establishing quasi-independence on average, and its
proof is the focus of the subsequent sections.

Define
\begin{align}
  E_q'= E_q'(\psi,\bgamma) &= \set*{\balpha \in [0,1]^2 : \abs{q\balpha - \bu - \bgamma} < \psi(q) \textrm{ for some } \bu\in S_q} \label{def_Eq'}\\
                   &= \bigcup_{\bu\in S_q} B\parens*{\frac{\bu + \bgamma}{q}, \frac{\psi(q)}{q}} \cap [0,1]^2,
\end{align}
where $S_q$ is as in~\eqref{shift_reduced_sets}.

\begin{lemma}\th\label{lem:logtolerance_}
  Let $\psi: \NN \to [0, 1/2]$. For $R\leq Q$,
  \begin{equation}\label{eq:logtol}
    \sum_{q\in (Q, 2Q]} \sum_{r\in (R, 2R]}\lambda(E_q'\cap E_r') \\
    \ll \sum_{q\in (Q, 2Q]} \lambda(E_q') \sum_{r\in (R, 2R]}\lambda(E_r') + \frac{\log^+ (Q/R)}{(Q/R)} \parens*{\sum_{q\in (Q, 2Q]}\lambda(E_q') +  \sum_{r\in (R, 2R]}\lambda(E_r')}
  \end{equation}
  where $\log^+$ denotes $\max\set{1, \log_2}$.
\end{lemma}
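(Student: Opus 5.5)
I would prove this along the lines of the model problem, turning each heuristic step into a rigorous one. The first step is a pointwise overlap estimate for $\lambda(E_q'\cap E_r')$ with $q\in(Q,2Q]$, $r\in(R,2R]$, $q\neq r$. Write $k,\ell$ for the dyadic levels with $2^{-k}<\psi(q)\le 2^{-k+1}$ and $2^{-\ell}<\psi(r)\le 2^{-\ell+1}$. Set $X=\max\{2r\psi(q),2q\psi(r)\}$, $D=X/\gcd(q,r)$, and let $\widehat D\asymp (Q/R)D$ be the approximation error that arises when $\bgamma$ is replaced by the rational point used in the shift reduction of $S_q$.

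Counting pairs $(\bu,\bv)\in S_q\times S_r$ as in~(\ref{eq:basicestimatesinhom}), and using the separation~(\ref{eq:atleast}) coming from the primitivity in~(\ref{shift_reduced_sets}), should give
\begin{equation*}
\lambda(E_q'\cap E_r')\ll \lambda(E_q')\lambda(E_r')+\frac{1}{D^2}\lambda(E_q')\lambda(E_r')\,\bone_{\brackets*{\norm*{\frac{q-r}{\gcd(q,r)}\bgamma}<\widehat D}}\,\mathcal I(q,r).
\end{equation*}
Here $\mathcal I$ is $\bone_{[2D\ge 1/B_k]}$ in the regime $B_k\le 2^{\sigma k}$, and $\ind{1\le Db_k+\abs{\frac{q-r}{\gcd(q,r)}}\norm{b_k\bgamma}}$ in the regime $B_k>2^{\sigma k}$. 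This holds except when the reduced forms of $(B_r\bv+\bA_r)/(B_rr)$ and $(B_k\bu+\bA_k)/(B_kq)$ can coincide, which forces $r\mid q$. I would treat the pairs with $r\mid q$ separately with the trivial bound~(\ref{trivial_overlap}). Since $\gcd(q,r)=r$, summing over $r\mid q$ in $(R,2R]$ gives $\ll \sum_q\psi(q)^2\,\#\{r\mid q: r\sim R\}\,(R/Q)^2$. This is $\ll (R/Q)\sum_q\lambda(E_q')$ after a divisor-count average, which is acceptable. I also need $\lambda(E_q')\asymp\psi(q)^2$, which I would check from $\#S_q\gg q^2$ by a routine CRT computation.

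Next I would sum over pairs by stratifying dyadically: $D\sim 2^{-j}$, the levels $k,\ell$, and $g=\gcd(q,r)$. The key observation is that $D\le 2^{-j}$ forces $q/g\le 2^{-j}2^{k}$ and $r/g\le 2^{-j}2^{\ell}$. For each fixed $q$, the number of admissible $r$ with $(q-r)/g=h$ is then at most one per value of $e=r/g$. So the pair count reduces to $\sum_q\psi(q)^2$ times $2^{2j}\cdot 2^{-j}\cdot(\text{range of }e)$ times a Bohr-set count
\begin{equation*}
\#\{1\le h\le H:\norm{h\bgamma}<\widehat D\}.
\end{equation*}
The asymmetry $Q\ge R$ should yield the extra factor $R/Q$: because $X$ uses the maximum, when $q/g$ is large relative to $r/g$ the constraint $D\le 2^{-j}$ restricts $e$ to a range shortened by a factor $\asymp R/Q$.

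For the Bohr sets I would replace $\bgamma$ by $\bA_k/B_k$ or $\ba_k/b_k$ and use the two estimates from the model problem: counting along full or partial orbits of a rational rotation on $\TT^2$, and the decomposition $h=mb_k+s$ with spacing along $n\,b_k\bgamma$. These are the lemmas of Section~\ref{sec:bohr-set-bounds}, which I would assume. The case split for the window is as follows.
\begin{enumerate}
\item If $\widehat D<D^{\tau}$, enlarge the window to $D^{\tau}$ and repeat the model computations. With $\rho<\sigma$ and $\tau$ close to $1$, the geometric sums $\sum_j 2^{-j(1-\rho)}$, $\sum_j 2^{-j}$ and $M^{\rho/2-\sigma/2}$-type terms all stay bounded.
\item If $\widehat D\ge D^{\tau}$, then $D\ge (R/Q)^{1/(1-\tau)}$. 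Only $\ll\log(Q/R)$ dyadic values of $j$ occur, and the crude Bohr bound combined with the gain $R/Q$ gives the $\log^+(Q/R)/(Q/R)$ term.
\end{enumerate}

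I expect the main obstacle to be making the near-linear gain in $R/Q$ hold uniformly across all these cases. This is harder than the model problem because $k$ and $\ell$ differ, and in the regime $B_k>2^{\sigma k}$ the admissible $h$ range and the indicator $\mathcal I$ interact with $\widehat D$ rather than $D$. I would first try to absorb the mismatch between $\psi(q)$ and $\psi(r)$ entirely into the definition of $D$ through the maximum, and to run the Bohr estimates with $k$, the larger of the two levels, as the controlling parameter.
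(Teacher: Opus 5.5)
Your architecture is the paper's: an overlap estimate with a Diophantine indicator, dyadic stratification in $D$, parametrizing $r$ by $e=r/\gcd(q,r)$ and $h$, Bohr-set bounds via $\bA/B$ and $\ba/b$, the split $\hatD<D^\tau$ versus $\hatD\ge D^\tau$, and the trivial bound for $r\mid q$. But the step you defer as ``the main obstacle'' is where the proof actually lives, and the resolution you suggest fails.

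The first failure is the choice of controlling rational approximation. The paper uses the level $\bullet$ of whichever of $\psi(q)/q$, $\psi(r)/r$ attains the maximum in $D$. Only then is the $h$-range $N\asymp 2^\bullet D$ (or $(Q/R)2^\bullet D$) with $2^{-\bullet}\le D$, which is exactly what Lemma~\ref{adhocbohrsetbounds} needs. Neither the level of $q$ nor the larger level works. Take $\psi(r)\ge\psi(q)$, so $\ell\le k$. Then $D=2\psi(r)q/\gcd(q,r)$ and $h\le N\asymp 2^{\ell}D$. Passing to $\bA_k/B_k$ costs only $N2^{-k}\ll D$ in the window. Even so, when $N<B_k$, Lemma~\ref{bohrsetbound} gives only $\frac1N\#\{h\le N:\norm{h\bA_k/B_k}\ll D\}\ll\min\{1,B_k2^{-\ell}\}$. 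That is no saving whenever $2^{\ell}\ll B_k\le 2^{\sigma k}$, a range that is nonempty once $\ell<\sigma k$.

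The second failure is fixing $q$ throughout. In the same regime, fix $q$ at level $k$ and fix $j$. Each level $\ell$ with $j<\ell\le k$ contributes about $2^{2j-2k-2\ell}\cdot (R/Q)2^{\ell-j}\cdot 2^{\ell-j}s_j=(R/Q)2^{-2k}s_j$, where $s_j$ is the Bohr saving. So the $\ell$-sum costs a factor $\asymp k\approx\log_2(1/\psi(q))$, which is unbounded.

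The paper avoids both failures by splitting into $D_j^1,D_j^2,D_j^3$ according to the order of $\psi(q)/q,\psi(r)/r$ and of $\psi(q),\psi(r)$:
\begin{itemize}
\item On $D_j^2$ ($\psi(r)\ge\psi(q)$) it fixes $r$ and sums over $q$ and over $k\ge\ell$. This converges to $(R/Q)\psi(r)^2$ and is the origin of the $\sum_r\lambda(E_r')$ term in \eqref{eq:logtol}, which your argument never produces.
\item On $D_j^3$ the constraint $k\le\ell\le k+L+2$ makes fixing $q$ affordable, at a cost of $L$.
\item On $D_j^1$ the gain $R/Q$ does not come from a shortened $e$-range. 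There the $e$-range is $\asymp 2^{k-j}$ and the $h$-range is lengthened by $Q/R$. The gain comes instead from $\psi(r)\le(2R/Q)\psi(q)$, through $\sum_{\ell\ge k+L-2}2^{-2\ell}$.
\end{itemize}

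Three smaller corrections:
\begin{itemize}
\item Since $D=\max\{2\psi(q)r,2\psi(r)q\}/\gcd(q,r)$, you get $r/\gcd(q,r)\ll 2^{k-j}$ and $q/\gcd(q,r)\ll 2^{\ell-j}$. You have these swapped.
\item In the overlap estimate, $\norm{h\bgamma}<D$ holds exactly because $\bc(\bu,\bv)\in h\bgamma+\ZZ^2$. Primitivity plus the triangle inequality only yields $2\hatD\ge 1/B_\bullet$, not $2D\ge 1/B_k$. So $\hatD$ belongs in the $B_\bullet$-indicator, not in the window.
\item For $r\mid q$ you cannot average a divisor count over $q$ against arbitrary weights $\psi(q)^2$. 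None is needed: with $d=q/r$, the pointwise bound $\sum_{d\ge Q/(2R)}d^{-2}\ll R/Q$ suffices.
\end{itemize}
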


We will make use of the following recent result of Beresnevich, Velani
and the second-named author~\cite[Theorem\,5]{BHV}:
\begin{proposition}\th\label{full_meas}
  Let $\mu$ be a doubling Borel regular probability measure on a
  metric space $X$. Let $\{A_i\}_{i\in\N}$ be a sequence of
  $\mu$-measurable subsets of $X$.  Suppose that
\begin{equation}\label{eqn01}
\sum_{i=1}^\infty \mu(A_i)=\infty
\end{equation}
and that there exists a constant $C>0$ such that
\begin{equation}\label{eqn02}
\sum_{s,t=1}^Q  \mu(A_s\cap A_t)\le C\left(\sum_{s=1}^Q  \mu(A_s)\right)^2\quad\text{for infinitely many $Q\in\N$\,.}
\end{equation}
 In addition,
suppose that for any $\delta>0$ and any closed ball $B$ centred at $\supp\mu$ there exists $i_0 =i_0 (\delta, B) $ such that for all $i  \geq i_0$,
\begin{equation}\label{vb89}
\mu\left(B\cap A_i\right)\le (1+\delta)\mu\left(B\right)\mu(A_i)\,.
\end{equation}
Then $\mu(\limsup_{q \to \infty}A_q)=1$.
\end{proposition}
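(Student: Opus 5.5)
The plan is to prove a \emph{local} positive-density statement and then upgrade it to full measure with the Lebesgue density theorem. Two tools are standard for doubling measures: the Lebesgue density theorem and the Vitali covering theorem for doubling metric measure spaces. The first ingredient is the Chung--Erd\H{o}s (Kochen--Stone) inequality
\begin{equation*}
\mu\Bigl(\limsup_{i\to\infty} E_i\Bigr) \geq \limsup_{Q\to\infty}\frac{\bigl(\sum_{i\le Q}\mu(E_i)\bigr)^2}{\sum_{s,t\le Q}\mu(E_s\cap E_t)},
\end{equation*}
valid whenever $\sum_i\mu(E_i)=\infty$. Applied directly with $E_i=A_i$, (\ref{eqn01}) and (\ref{eqn02}) give $\mu(\limsup A_i)\ge 1/C$. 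To localise, fix a closed ball $B$ centred in $\supp\mu$ and apply the same inequality to $E_i=A_i\cap B$, for $i\ge i_0$. The denominator is harmless, since $\mu(A_s\cap A_t\cap B)\le \mu(A_s\cap A_t)$, so (\ref{eqn02}) bounds it by $C(\sum_{s\le Q}\mu(A_s))^2$. Discarding finitely many indices $i<i_0$ changes neither the divergence nor the asymptotic ratio.

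The whole argument therefore reduces to a \emph{lower} bound of the form
\begin{equation*}
\mu(A_i\cap B)\ge c\,\mu(B)\,\mu(A_i)\qquad (i\ge i_0(B)),
\end{equation*}
with $c>0$ absolute. Hypothesis (\ref{vb89}) only supplies \emph{upper} bounds, so I would obtain the lower bound by complementation. Write $\mu(A_i\cap B)=\mu(A_i)-\mu(A_i\setminus B)$. By the Vitali covering theorem for doubling measures, the open set $X\setminus B$ is covered up to a null set by countably many disjoint closed balls centred in $\supp\mu$. Choose finitely many of them, $B_1,\dots,B_N$, with $\mu(X\setminus B)-\sum_j\mu(B_j)<\eta$. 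Applying (\ref{vb89}) to each $B_j$ (finitely many balls, hence one common $i_0$) gives $\mu(A_i\cap\bigcup_j B_j)\le(1+\delta)\mu(X\setminus B)\mu(A_i)$.

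The leftover set $L=(X\setminus B)\setminus\bigcup_jB_j$ has $\mu(L)<\eta$. It must be handled by covering an open neighbourhood of $L$ of measure $\ll\eta$ by finitely many balls, again chosen via the doubling $5r$-covering lemma. Applying (\ref{vb89}) to those balls then gives $\mu(A_i\cap L)\ll \eta\,\mu(A_i)$. Combining the pieces yields $\mu(A_i\cap B)\ge (\mu(B)-\delta-O(\eta))\mu(A_i)$, which is at least $\tfrac12\mu(B)\mu(A_i)$ for small $\delta,\eta$.

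I expect the main obstacle to be this leftover step. The covering of $L$ must consist of finitely many balls, since (\ref{vb89}) gives only a ball-dependent threshold $i_0$, while its total measure must stay comparable to $\eta$ despite overlaps. If a direct finite cover is awkward, I would first reduce to $B$ with $\mu(\partial B)=0$, which holds for all but countably many radii. I would then choose the $B_j$ slightly shrunk inside a compact-like exhaustion, using that doubling measures on the relevant support are outer regular. With the lower bound in hand, the Chung--Erd\H{o}s inequality gives $\mu(B\cap\limsup A_i)\ge \frac{c^2}{C}\mu(B)$ for every such ball $B$.

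Finally, suppose the set $F=X\setminus\limsup A_i$ had positive measure. By the Lebesgue density theorem for doubling measures, $F$ would have a density point $x$. For small balls $B$ centred at $x$ this forces $\mu(B\cap\limsup A_i)=o(\mu(B))$, contradicting the uniform lower bound $\frac{c^2}{C}\mu(B)$. Hence $\mu(\limsup A_i)=1$.
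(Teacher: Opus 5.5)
\textbf{Gap: the localized Chung--Erd\H{o}s step gives $\mu(B)^2$, not $\mu(B)$.} The paper does not prove this proposition; it imports it from \cite{BHV} (Theorem~5 there). So your argument has to stand on its own, and as written it breaks at this step. Write $S_Q=\sum_{i\le Q}\mu(A_i)$ and take $E_i=A_i\cap B$. Your lower bound $\mu(A_i\cap B)\ge c\,\mu(B)\mu(A_i)$ makes the numerator at least $c^2\mu(B)^2 S_Q^2$. The only denominator bound the hypotheses give is the global one, $\sum_{s,t\le Q}\mu(A_s\cap A_t\cap B)\le C S_Q^2$. The ratio is therefore $c^2\mu(B)^2/C$, not $c^2\mu(B)/C$. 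Replacing $\mu(A_s\cap A_t\cap B)$ by $\mu(A_s\cap A_t)$ is exactly where a factor $\mu(B)$ is lost, so the denominator is not harmless. A bound of order $\mu(B)^2$ gives no contradiction at a density point of $F=X\setminus\limsup A_i$, since $\mu(B)^2=o(\mu(B))$ as $B$ shrinks. Nor can you get a per-ball bound linear in $\mu(B)$ from (\ref{eqn02}). A global second-moment bound does not stop $\sum_{i\le Q}\bone_{A_i}$ from concentrating its second moment inside one small ball.

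\textbf{Repair: argue on a set whose measure stays bounded below.} Suppose $\theta=\mu(F)>0$. Use the density theorem and the Vitali covering theorem to pick finitely many disjoint closed balls $B_1,\dots,B_M$ centred in $\supp\mu$, with $\mu$-null bounding spheres, such that $\mu(B_j\cap F)\ge(1-\eps)\mu(B_j)$ and $U=\bigcup_jB_j$ satisfies $\mu(U)\ge\theta/2$. Your lower bound holds on each $B_j$ with a common threshold, since there are finitely many balls. Hence $\mu(A_i\cap U)\ge c\,\mu(U)\mu(A_i)$ for large $i$. The same computation then gives $\mu(U\cap\limsup A_i)\ge c^2\mu(U)^2/C$, whereas $\mu(U\cap\limsup A_i)\le\eps\,\mu(U)$. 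Taking $\eps<c^2\theta/(2C)$ gives a contradiction; this is legitimate because your $c$ (e.g.\ $c=1/2$) does not depend on the balls.

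\textbf{The leftover step.} You leave it vague, but it goes through along the lines you suggest. If all balls involved have $\mu$-null spheres, the leftover $L$ is open with $\mu(\overline L)=\mu(L)<\eta$, so its $r$-neighbourhood has measure $<2\eta$ for small $r$. A maximal $r$-separated subset of $L\cap\supp\mu$ is finite, because the support of a doubling probability measure is bounded and hence totally bounded. By doubling, this gives finitely many balls of total measure $\ll\eta$.
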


We prove two facts about the shift-reduced sets, $E_q'$. The first is
that their measures are comparable to the measures of $E_q$. The
second is that the sets are equidistributed in $[0,1]^2$.

\begin{lemma}\th\label{lem:equid}
  Let $\psi: \mathbb{N} \to [0,1/2]$. Then we have the following:

\begin{itemize}
    \item[(i)] For all $q \geq 1$, $\lambda(E_q')\gg \lambda(E_q)$.
    \item[(ii)] For all balls $B \subseteq [0,1)^2$, we have 
    \begin{equation}
      \label{eq:forBHV}
      \lim_{\substack{q \to \infty\\q \in \supp(\psi)}} \frac{\lambda(B \cap E_q')}{\lambda(B)\lambda(E_q')} = 1.
    \end{equation}
\end{itemize}
\end{lemma}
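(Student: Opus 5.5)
The plan is to make the structure of $S_q$ completely explicit via the Chinese remainder theorem, after which (i) becomes a product formula and (ii) an inclusion--exclusion lattice-point count. Throughout, fix $q$ with $2^{-k}<\psi(q)\le 2^{-k+1}$. Write $(\bc,c)$ for whichever pair, $(\bA_k,B_k)$ or $(\ba_k,b_k)$, is used in \eqref{shift_reduced_sets}; in both cases $\gcd(\bc,c)=1$, and the two cases are treated identically.

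\emph{Structure of $S_q$.} The condition is $\gcd(cu_1+c_1,\,cu_2+c_2,\,cq)=1$, checked prime by prime for $p\mid cq$.
\begin{itemize}
\item If $p\mid c$, then $cu_i+c_i\equiv c_i \pmod p$. Since $\gcd(c_1,c_2,c)=1$, $p$ cannot divide both coordinates, so there is no restriction.
\item If $p\mid q$ and $p\nmid c$, then $p$ divides both coordinates exactly when $\bu\equiv -c^{-1}\bc \pmod p$. So one residue class of $(\ZZ/p\ZZ)^2$ is excluded.
\end{itemize}
By CRT, $S_q$ is therefore the set of $\bu\in(\ZZ/q\ZZ)^2$ avoiding one prescribed class modulo each prime $p\mid q$ with $p\nmid c$. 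Hence
\begin{equation*}
  \#S_q=q^2\prod_{p\mid q,\ p\nmid c}\parens*{1-p^{-2}}\ \ge\ \frac{6}{\pi^2}\,q^2 .
\end{equation*}

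\emph{Part (i).} Work on the torus $\RR^2/\ZZ^2$, which is legitimate since the defining condition is $\ZZ^2$-periodic. The centres $(\bu+\bgamma)/q$, for distinct $\bu$ mod $q$, are $1/q$-separated in the max norm. The boxes have radius $\psi(q)/q\le 1/(2q)$, so they are pairwise disjoint. This gives
\begin{equation*}
  \lambda(E_q')=\#S_q\parens*{\frac{2\psi(q)}{q}}^2\ \ge\ \frac{6}{\pi^2}\,q^2\parens*{\frac{2\psi(q)}{q}}^2\ \ge\ \frac{6}{\pi^2}\,\lambda(E_q),
\end{equation*}
using $\lambda(E_q)\le 4\psi(q)^2$.

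\emph{Part (ii).} By the disjointness above, it suffices to show that
\begin{equation*}
  N_q(B):=\#\set*{\bu\in S_q:\ (\bu+\bgamma)/q\in B \bmod 1}=\lambda(B)\,\#S_q+o(q^2).
\end{equation*}
Indeed, boxes meeting $\partial B$ number $O(q)$, each of measure $(2\psi(q)/q)^2$. So $\lambda(B\cap E_q')=(2\psi(q)/q)^2\parens*{N_q(B)+O(q)}$, and dividing by $\lambda(B)\lambda(E_q')=\lambda(B)\#S_q(2\psi(q)/q)^2$ with $\#S_q\gg q^2$ gives the limit $1$. The restriction $q\in\supp\psi$ is needed only so that $\psi(q)>0$ and the quotient makes sense.

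To estimate $N_q(B)$, expand the CRT description by Möbius inversion:
\begin{equation*}
  \ind{\bu\in S_q}=\sum_{\substack{d\mid q\\ \gcd(d,c)=1}}\mu(d)\,\ind{\bu\equiv \bx_d \bmod d},
\end{equation*}
where $\bx_d$ is the CRT combination of the excluded classes. For each $d$, the corresponding points $(\bu+\bgamma)/q$ form a translate of the lattice $(d/q)\ZZ^2$. Such a translate meets the convex set $B$ in $\lambda(B)(q/d)^2+O(q/d+1)$ points. Summing with the signs $\mu(d)$, the main terms reproduce exactly $\lambda(B)\#S_q$. The total error is
\begin{equation*}
  \ll\sum_{d\mid q}\parens*{\frac{q}{d}+1}=\sigma(q)+\tau(q)\ll q\log\log q,
\end{equation*}
which is $o(q^2)$. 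All bounds are uniform in $k$, $c$ and $\bgamma$, so the dependence of the shift-reduction on $\psi(q)$ is harmless.

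The main (though mild) obstacle is making the Möbius expansion precise: one has to verify that the excluded classes really combine via CRT into a single class $\bx_d$ modulo each squarefree $d\mid q$ coprime to $c$. The remaining point is to check that the boundary error $O(q/d+1)$ summed over divisors of $q$ stays $o(q^2)$, which the divisor bound $\sigma(q)+\tau(q)\ll q\log\log q$ handles comfortably.
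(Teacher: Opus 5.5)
Your proposal is correct and follows essentially the same route as the paper. For (i), the CRT description of $S_q$ gives $\#S_q=q^2\prod_{p\mid q,\,p\nmid c}(1-p^{-2})\ge \frac{6}{\pi^2}q^2$; for (ii), Möbius inversion over squarefree $d$ combined with lattice-point counts in residue classes modulo $d$ gives equidistribution. The differences are cosmetic. The paper substitutes $\bb=B_k\bu+\bA_k$ and counts the points $\bu/q$ in rectangles $[0,\by]$, whereas you count the centres directly in $B$ and spell out the boundary-box step from centre counts to $\lambda(B\cap E_q')$, which the paper leaves implicit.
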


\begin{proof}
  For (i), note that by $\psi \leq \frac{1}{2}$, we have
  $\lambda(E_q') = 4(\psi(q)/q)^2\#S_q$.  Defining
\[\pi_q := \begin{cases} \prod\limits_{{p \mid q,\;p \nmid B_k}} p, &\text{ if } B_k \leq 2^{\sigma k},\\
\prod\limits_{{p \mid q,\;p \nmid b_k}} p &\text{ if } B_k > 2^{\sigma k},
\end{cases}\]
    we immediately obtain that (see e.g. \cite[Lemma 1]{Kim_new})
    
    \begin{equation}\label{eq:meas_eq}\#S_q = q^2\prod_{\substack{p \mid \pi_q}} \left(1 - \frac{1}{p^2}\right) \geq q^2\prod_{p \in \mathbb{P}} \left(1 - \frac{1}{p^2}\right) = q^2\frac{6}{\pi^2}
    \gg 1,
    \end{equation}
    and (i) follows.

    For (ii), we can argue similarly to \cite[Proposition 1]{BHV} or
    \cite[Lemma 19]{MR} where equidistribution in the $1$-dimensional
    case was shown, but we include a proof for the two-dimensional
    setup for the sake of completeness. Note that \eqref{eq:forBHV}
    follows immediately once we have shown that for every
    $\by = (y_1,y_2) \in [0,1)^2$, we have
    \[\#\{\bu \in S_q:\bu/q \preceq \by\} \sim \#S_q y_1y_2,\quad q \to \infty,\]
where $(x_1,x_2) \preceq (y_1,y_2)$ is defined as $x_1 \leq y_1$ and $x_2 \leq y_2$.
    Note that
\[\begin{split}
\# \Big\{\bu \in S_q: \bu/q \preceq \by \Big\}
\;&=\; \#\big\{0 \preceq \bu \preceq q\by: \gcd(\underbrace{\bA_k+ B_k\bu}_{\bb},B_kq) = 1\big\}
\\&=  \#\big\{0 \preceq \bb \preceq \bA_k + q B_k\by: \bb \equiv
\bA_k \pmod {B_k},\;\; \gcd(\bb,B_kq) = 1\big\}.
\end{split}
\]
Since $\bA_k$ is coprime to $B_k$, we have that
\[
\left\{\begin{array}{r}
\bb\equiv \bA_k \pmod {B_k}\,, \\[1ex]  \gcd(\bb,qB_k) = 1\,,
\end{array}\right.
\;\iff\;
\left\{\begin{array}{l}
\bb\equiv \bA_k \pmod {B_q}\,,  \\[1ex]  \gcd(\bb,\pi_q) = 1\,.
\end{array}\right.
\]
By a standard inclusion-exclusion argument involving the M\"obius
function $\mu$, we can therefore write
\begin{equation}\label{eq99}
\# \Big\{\bu \in S_q: \bu/q \preceq \by \Big\}= \sum_{d \mid \pi_q}\mu(d)\sum_{\substack{0 \preceq \bb \preceq \by qB_k+ \bA_k\\ d\mid \bb}}  \bone_{[\bb \equiv \bA_k \pmod {B_k}]}\,.
\end{equation}
Since $\gcd (d,B_k) \leq \gcd(\pi_q,B_k) = 1$, we have that
\begin{equation}\label{eq100}
\sum_{\substack{0 \preceq \bb < \by q B_k + \bA_k\\ d\mid \bb}}  \bone_{[\bb \equiv \bA_k \pmod {B_k}]} = \sum_{\substack{0 \preceq \bb < \by q B_k + \bA_k\\ \bb \equiv \hat{\bb} \pmod {d B_k}}}   1,
\end{equation}
where $\hat{\bb} = (\hat{b}_1,\hat{b_2})$ is the unique solution $\pmod{B_kd}$ to the system 
$$
\bb \equiv 0 \pmod d,\qquad \bb \equiv \bA_k \pmod{B_k}\,,
$$
which exists by the Chinese Remainder Theorem.  Thus we get
\begin{equation}\label{eq101}\begin{split}
\sum_{\substack{0 \preceq \bb \preceq \by q B_k + \bA_k\\ \bb \equiv \hat{\bb} \pmod {d B_k}}}   1 
= \sum_{\substack{0 \leq b_1 \leq y_1 q B_k + A_{k,1}\\ b_1 \equiv \hat{b_1} \pmod {d B_k}}}
 \sum_{\substack{0 \leq b_2 \leq y_2 q B_k + A_{k,2}\\ b_2 \equiv \hat{b_2} \pmod {d B_k}}}
1 &= \left(
\frac{y_1 q B_k}{B_kd} + O(1)\right)\left(
\frac{y_2 q B_k}{B_kd} + O(1)\right)
\\& = \frac{y_1y_2 q^2}{d^2} + O\left(\frac{q}{d}\right).
\end{split}
\end{equation}
Thus by \eqref{eq99}, we get
\[\begin{split}\#\Big\{\bu \in S_q: \bu/q \preceq \by\Big\} 
&= y_1y_2 q^2 \sum_{d \mid \pi_k}\frac{\mu(d)}{d^2} + O\left(q \sum_{d \mid \pi_k}\frac{\mu(d)}{d}\right)
\\&= y_1y_2 \prod_{p \mid \pi_k}\left(1 - \frac{1}{p^2}\right) + O(q^{3/2})
\\&= y_1y_2\#S_q + O(q^{3/2}),
\end{split}
\]
using \eqref{eq:meas_eq} and a crude divisor-bound.
\end{proof}

\begin{proof}[Proof of Theorem \ref{thm:actualresult}, assuming Lemma \ref{lem:logtolerance_}]
   By Proposition \ref{prop:wlog}, we may assume ~(\ref{divergent}),~(\ref{psi_to_0}),
  and~(\ref{irrational}). We aim to apply Proposition~\ref{full_meas} to prove ~\th~\ref{thm:actualresult}; it remains to verify that for $A_i = E_i'$, we have \eqref{eqn01}, \eqref{eqn02} and \eqref{vb89}. Note that \eqref{eqn01} follows from~(\ref{divergent}) and Lemma \ref{lem:equid}(i), and
  \eqref{vb89} follows from Lemma \ref{lem:equid}(ii), thus we are left to prove \eqref{eqn02}.
 Taking $Q=2^U$ for some $U\in \NN$,
  \begin{align}
    \sum_{q, r\leq 2^U} \lambda(E_q'\cap E_r')
    &\ll \sum_{q\leq 2^U} \sum_{r\leq q}\lambda(E_q'\cap E_r') \nonumber \\
    &\ll \sum_{k=0}^{U-1}\sum_{q\in (2^k, 2^{k+1}]} \sum_{\ell = 0}^k \sum_{r\in (2^\ell,2^{\ell+1}]}\lambda(E_q'\cap E_r')\nonumber \\
    &= \sum_{k=0}^{U-1}\sum_{\ell = 0}^k \underbrace{\sum_{q\in (2^k, 2^{k+1}]} \sum_{r\in (2^\ell,2^{\ell+1}]}\lambda(E_q'\cap E_r')}.\label{eq:4}
  \end{align}
 Using Lemma \ref{lem:logtolerance_}, we get

\begin{align*}
  \sum_{q, r\leq 2^U}
  & \lambda(E_q'\cap E_r') \\
  &\ll \sum_{k=0}^{U-1}\sum_{\ell = 0}^k \brackets*{\sum_{q\in (2^k, 2^{k+1}]} \lambda(E_q') \sum_{r\in (2^\ell,2^{\ell+1}]}\lambda(E_r') + \frac{\log^+ 2^{k-\ell}}{2^{k-\ell}} \parens*{\sum_{q\in (2^k,2^{k+1}]}\lambda(E_q')  + \sum_{r\in (2^\ell,2^{\ell+1}]}\lambda(E_r')}} \\
  &\ll \sum_{q\leq 2^U}\sum_{r \leq 2^U}\lambda(E_q')\lambda(E_r') \\
  &\quad + \sum_{k=0}^{U-1}\sum_{q\in (2^k,2^{k+1}]}\lambda(E_q') \underbrace{\sum_{\ell = 0}^k \frac{\log^+ 2^{k-\ell}}{2^{k-\ell}}}_{\ll 1} + \sum_{\ell=0}^{U-1}\sum_{r\in (2^\ell,2^{\ell+1}]}\lambda(E_r') \underbrace{\sum_{k = \ell}^{U-1} \frac{\log^+ 2^{k-\ell}}{2^{k-\ell}}}_{\ll 1} \\
  &\ll \parens*{\sum_{q\leq 2^U}\lambda(E_q')}^2 + \sum_{q\leq 2^U}\lambda(E_q') + \sum_{r\leq 2^U}\lambda(E_r') 
  \ll \parens*{\sum_{q\leq 2^U}\lambda(E_q')}^2,
\end{align*}
with the last inequality following from~(\ref{divergent}). Since $U\in\NN$ is arbitrary, this
establishes~(\ref{eqn02}) as needed.
\end{proof}

The rest of this article is devoted to proving Lemma \ref{lem:logtolerance_}.

\section{Overlap estimates and Bohr set bounds}

This section contains the precise estimates that are needed in the
proof of~\th~\ref{lem:logtolerance_}. In the overlap estimates of
\th~\ref{lem:overlaps} below, one can see reflected the case
distinctions that led to the various refinements of the model problem
in Section~\ref{sec:heur-model-probl}. In the Bohr set bounds
of~\th~\ref{adhocbohrsetbounds} below, we carry out general
two-dimensional versions of the computations that led to the solution
of the model problem.

\subsection{Overlap estimates for the shift-reduced sets}

\begin{lemma}[Overlap estimates]\th\label{lem:overlaps}
  Let $\psi:\NN\to [0, 1/2]$ and $\bgamma\in\RR^2$. For $r, q\in \NN$, put
  \begin{gather*}
   \Delta := \max\set*{\frac{2\psi(q)}{q}, \frac{2\psi(r)}{r}}, \quad
 \delta := \min\set*{\frac{2\psi(q)}{q}, \frac{2\psi(r)}{r}},\\
    D:= \max\set*{\frac{2\psi(q)}{q}, \frac{2\psi(r)}{r}}\frac{qr}{\gcd(q,r)}, \\
    q'= \frac{q}{\gcd(q,r)}, \qquad r'= \frac{r}{\gcd(q,r)}, \qquad h = q'-r',
  \end{gather*}
  Suppose $q\in (Q, 2Q]$, $r\in (R, 2R]$, $r< q$,
  $2^{-k} < \psi(q) \leq 2^{-k+1}$ and
  $2^{-\ell}< \psi(r) \leq 2^{-\ell+1}$,
  and set
  \begin{equation*}
    \hatD:=\hatD(h) = \max\set*{D, h\abs*{\bgamma - \frac{\bA_\bullet}{B_\bullet}}}, \qquad\textrm{where}\qquad    \bullet =
    \begin{cases}
      k &\textrm{if } \frac{\psi(q)}{q} > \frac{\psi(r)}{r}, \\
      \ell &\textrm{if } \frac{\psi(q)}{q} \leq \frac{\psi(r)}{r}.
    \end{cases}
  \end{equation*}
  The following hold:
  \begin{itemize}
  \item For all $r,q$ as above,
      \begin{equation*}
        \lambda(E_q'\cap E_r') \ll \delta^2\gcd(q,r)^2(D^2+1)
        \ll \lambda(E_q')\lambda(E_r') + \frac{1}{D^2}\lambda(E_q')\lambda(E_r').
      \end{equation*}
 
  \item If $r\nmid q$ and
    $B_\bullet \leq \max\set{2^\bullet D,
      2^{\sigma\bullet}}$, then
      \begin{equation*}
    \lambda(E_q'\cap E_r') \ll
      \lambda(E_q')\lambda(E_r') + 
      \frac{1}{D^2}\lambda(E_q')\lambda(E_r')\bone_{\brackets*{\norm*{h\bgamma}<D}}\parens*{\bone_{\brackets*{2\widehat{D} \geq \frac{1}{B_\bullet}}} \bone_{\brackets*{B_\bullet \leq 2^{\sigma\bullet}}} + \bone_{\brackets*{2^{\sigma\bullet} < B_\bullet}}}.
    \end{equation*}
    
  \item If $r\nmid q$ and $B_\bullet > \max\set{2^\bullet D, 2^{\sigma\bullet}}$,
    then
  \begin{equation*}
    \lambda(E_q'\cap E_r') \ll
    \lambda(E_q')\lambda(E_r') 
    + 
      \frac{1}{D^2}\lambda(E_q')\lambda(E_r')\bone_{\brackets*{\norm*{h\bgamma}<D}}\parens*{\bone_{\brackets*{2D \geq \frac{1}{b_\bullet}}}\bone_{\brackets*{h\norm{b_\bullet\bgamma}\leq 1/2}} + \bone_{\brackets*{h\norm{b_\bullet\bgamma}>1/2}}}.
    \end{equation*}
  \end{itemize}
\end{lemma}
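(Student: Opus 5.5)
The plan is to reduce every case to counting pairs of centres, following the route that led to (\ref{to_lattice_points}). $E_q'$ and $E_r'$ are unions of squares of side $2\psi(q)/q$ and $2\psi(r)/r$, centred at $(\bu+\bgamma)/q$ with $\bu\in S_q$ and at $(\bv+\bgamma)/r$ with $\bv\in S_r$. So
\[
\lambda(E_q'\cap E_r')\ll \delta^2\,\#\set*{(\bu,\bv)\in S_q\times S_r: \abs*{\tfrac{\bu+\bgamma}{q}-\tfrac{\bv+\bgamma}{r}}<\Delta}.
\]
Multiplying by $qr/\gcd(q,r)$, the condition becomes $\abs{r'\bu-q'\bv + h\bgamma}<D$ up to the factor $\gcd(q,r)$, once we note $r'(\bu+\bgamma)-q'(\bv+\bgamma) = r'\bu-q'\bv - h\bgamma$.

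As $(\bu,\bv)$ ranges over $\ZZ_q^2\times\ZZ_r^2$, the quantity $r'\bu-q'\bv$ hits each residue class modulo $q'r'\gcd(q,r)$ (restricted to multiples of nothing special) with multiplicity $\gcd(q,r)^2$. Hence the count is $\ll \gcd(q,r)^2\,\#\{\bx\in\ZZ^2:\abs{\bx - h\bgamma}<D\}$, which is $\ll \gcd(q,r)^2(D^2+1)$. This gives the first bullet. For the second inequality, $\delta^2\gcd(q,r)^2D^2=\delta^2\Delta^2 q^2r^2\asymp\lambda(E_q)\lambda(E_r)\ll\lambda(E_q')\lambda(E_r')$ by Lemma~\ref{lem:equid}(i). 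Similarly $\delta^2\gcd(q,r)^2 = D^{-2}\delta^2\Delta^2q^2r^2$.

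For the refined bullets, the extra ``$+1$'' term only arises from integer points $\bx$ within $D$ of $h\bgamma$ when $D<1$. This forces $\norm{h\bgamma}<D$, which is the indicator. When $D\ge1$ the first term already dominates, so we may assume $D<1$. It then remains to show that, for the relevant shift reductions, solutions exist only under the extra indicator conditions; otherwise the count is $\ll\gcd^2 D^2$.

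\textbf{Case $B_\bullet\le 2^{\sigma\bullet}$.} Take $\bullet=k$; the case $\bullet=\ell$ is symmetric. Replace $\bgamma$ by $\bA_k/B_k$, which moves the target by at most $h\abs{\bgamma-\bA_k/B_k}\le\widehat D$, so any solution gives $\abs{r'(B_k\bu+\bA_k)-q'(B_k\bv+\bA_k)}<2\widehat D B_k$.

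We then need the left side nonzero, using the primitivity of $S_q$ (or $S_r$) and $r\nmid q$. If it vanished, $(B_k\bu+\bA_k)/(B_kq)=(B_k\bv+\bA_k)/(B_kr)$ would hold with the left fraction reduced. Since $B_kr<B_kq$, this is impossible, although the bookkeeping when the reduction used is $B_\ell$ on the $r$ side and $\bullet=k$ must be handled. I would argue via the reduced side: the reduced fraction has denominator $B_\bullet\cdot(\text{its modulus})$, and equality forces that modulus to divide the other, giving $q\mid r$ or $r\mid q$.

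A nonzero integer vector has norm $\ge1$, so $2\widehat D\ge 1/B_\bullet$. When instead $2^{\sigma\bullet}<B_\bullet\le 2^\bullet D$, we keep only $\norm{h\bgamma}<D$, which the statement permits.

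\textbf{Case $B_\bullet>\max\{2^\bullet D,2^{\sigma\bullet}\}$.} Here the reduction is with respect to $\ba_\bullet/b_\bullet$. Repeating the argument, with error $h\norm{b_\bullet\bgamma}/b_\bullet$, gives the nonvanishing bound
\[
1\le b_\bullet\bigl(D+h\norm{b_\bullet\bgamma}/b_\bullet\bigr)=Db_\bullet+h\norm{b_\bullet\bgamma},
\]
as in the model problem. If $h\norm{b_\bullet\bgamma}\le 1/2$, this yields $2D\ge 1/b_\bullet$.

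The main obstacle is the nonvanishing step when the two reductions use different rationals ($k\ne\ell$) with mixed regimes. I would first try to use only the primitivity of the set indexed by $\bullet$, treating the other set as unrestricted, and rely on $r\nmid q$ to exclude coincidences.
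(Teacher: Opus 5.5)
Your proposal is correct and follows essentially the same route as the paper, apart from a harmless sign slip in $\pm h\bgamma$. You count pairs of centres via the $\gcd(q,r)^2$-to-one map onto $h\bgamma+\ZZ^2$, then for $D<1$ replace $\bgamma$ by $\bA_\bullet/B_\bullet$ or $\ba_\bullet/b_\bullet$ and use nonvanishing of the resulting integer vector to obtain the extra indicators.

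The ``obstacle'' you flag goes away exactly as you propose, which is also what the paper does implicitly. Only the primitivity of the set indexed by $\bullet$ is needed. A vanishing difference would then force $q\mid r$ when $\bullet=k$, which is impossible since $r<q$, or $r\mid q$ when $\bullet=\ell$, which is excluded by hypothesis.
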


\begin{proof}
  Trivially, we have
  \begin{align*}
    \lambda(E_q'\cap E_r')
    &\leq \delta^2 \#\set*{(\bu,\bv)\in S_q\times S_r : \abs*{\frac{\bu + \bgamma}{q} - \frac{\bv + \bgamma}{r}} < \Delta} \\
    &= \delta^2 \#\set*{(\bu,\bv)\in S_q\times S_r : \abs{q'(\bv + \bgamma)- r'(\bu + \bgamma)} < D(q,r)}.
  \end{align*}

  Each value taken by
  \begin{equation*}
    \bc(\bu, \bv):=q'(\bv + \bgamma) -r'(\bu + \bgamma) 
  \end{equation*}
  is achieved by $\gcd(q,r)^2$ pairs
  $(\bu', \bv')\in \ZZ_q^2\times\ZZ_r^2$. Consequently, 
  \begin{align*}
    \lambda(E_q'\cap E_r')
    &\leq \delta^2 \#\set*{(\bu,\bv)\in S_q\times S_r : \abs{\bc(\bu,\bv)} < D} \\
    &\leq \delta^2\gcd(q,r)^2\#\underbrace{\set*{\bc(\bu,\bv) : (\bu,\bv)\in S_q\times S_r \textrm{ and } \abs{\bc(\bu,\bv)} < D}}_{C(q,r)}.
  \end{align*}
  Since 
  $C(q,r) \subset h\bgamma + \ZZ^2$, we have
  \begin{equation*}
    \#C(q,r) \ll (D^2 + 1)\bone_{\brackets*{C(q,r)\neq \emptyset}},
  \end{equation*}
  leading to
\begin{align*}
 \lambda(E_q'\cap E_r') &\ll \delta^2\gcd(q,r)^2(D^2+1)\bone_{\brackets*{C(q,r)\neq \emptyset}}\\
                        &\ll \lambda(E_q)\lambda(E_r) + \frac{1}{D^2}\lambda(E_q)\lambda(E_r)\bone_{\brackets*{C(q,r)\neq \emptyset}}\bone_{\brackets*{D <1}} \\
                        &\ll \lambda(E_q')\lambda(E_r') + \frac{1}{D^2}\lambda(E_q')\lambda(E_r')\bone_{\brackets*{C(q,r)\neq \emptyset}}\bone_{\brackets*{D <1}},
\end{align*}
by~\th\ref{lem:equid}(i). In the remainder of the proof, we will
assume that $r \nmid q$, and will identify necessary conditions for
$C(q,r)\neq \emptyset$ when $D<1$.
  
\subsubsection*{Case 1:
  $B_\bullet \leq \max\set{2^\bullet D, 2^{\sigma\bullet}}$}

Supposing $\abs{\bc(\bu,\bv)}<D$, the triangle inequality gives
\begin{align}
    \abs*{r'\parens*{\bu + \frac{\bA_\bullet}{B_\bullet}} - q'\parens*{\bv + \frac{\bA_\bullet}{B_\bullet}}}
    &< D + \abs*{r'\parens*{\bgamma-\frac{\bA_\bullet}{B_\bullet}} - q'\parens*{\bgamma - \frac{\bA_\bullet}{B_\bullet}}}\label{eq:lhs} \\
    &\leq 2\hatD. \nonumber
\end{align}
If $B_\bullet \leq 2^{\sigma\bullet}$, the coprimality condition
from $(\bu,\bv)\in S_q\times S_r$ (recall \eqref{shift_reduced_sets}), and the fact that $r\nmid q$ show
that the left-hand side of~\eqref{eq:lhs} is bounded below by
$\frac{1}{B_\bullet}$, so we arrive at
\begin{equation*}
  \bone_{\brackets*{C(q,r)\neq\emptyset}}\bone_{\brackets*{D <1}}\bone_{\brackets*{B_\bullet\leq 2^{\sigma\bullet}}}
  \leq \bone_{\brackets*{2\hatD \geq \frac{1}{B_\bullet}}}\bone_{\brackets*{\norm*{h\bgamma} < D}}.
\end{equation*}
If, on the other hand,
$2^{\sigma\bullet} < B_\bullet \leq 2^\bullet D$, then there
is no coprimality condition enforcing an upper bound on $1/B_\bullet$,
so
\begin{equation*}
  \bone_{\brackets*{C(q,r)\neq\emptyset}}\bone_{\brackets*{D <1}}\bone_{\brackets*{2^{\sigma\bullet} < B_\bullet \leq 2^{\bullet} D}}
  \leq \bone_{\brackets*{\norm*{h\bgamma} < D}}\bone_{\brackets*{2^{\sigma\bullet} < B_\bullet}}
\end{equation*}
in this case.

\subsubsection*{Case 2: $B_\bullet > \max\set{2^{\bullet} D ,2^{\sigma\bullet}}$}

For $D < 1$ the condition $\abs{\bc(\bu,\bv)}<D$ implies
\begin{equation}\label{eq:Dtau}
  \abs*{q'(\bv + \bgamma) - r'(\bu + \bgamma)} < D,
\end{equation}
and thus in particular
\begin{equation}\label{eq:Dtaunorm}
  \norm*{h\bgamma} < D.
\end{equation}
On the other hand, the triangle inequality and shift-reducedness with respect to $b_{\bullet}$ gives
\begin{align*}
  \abs*{q'\parens*{\bv + \bgamma} - r'\parens*{\bu + \bgamma}}
  &\geq \abs*{\frac{1}{b_\bullet} - \abs*{(q'-r')\parens*{\bgamma-\frac{\ba_\bullet}{b_\bullet}}}}\\
  &= \frac{1}{b_\bullet} \abs*{1 - h \norm*{b_\bullet\bgamma}}.
\end{align*}
Consequently, if $h\norm{b_\bullet\bgamma} \leq \frac{1}{2}$,
then~\eqref{eq:Dtau} requires
\begin{equation*}
  \frac{1}{b_\bullet} < 2D.
\end{equation*}
On the other hand, if $h \norm{b_\bullet\bgamma} > \frac{1}{2}$,
then there is no such requirement. We have
\begin{equation}
  \bone_{\brackets*{C(q,r)\neq \emptyset}}\bone_{\brackets*{D <1}}
  \leq \bone_{\brackets*{\norm*{h\bgamma}<D}}\parens*{\bone_{\brackets*{2D \geq \frac{1}{b_\bullet}}}\bone_{\brackets*{h\norm{b_\bullet\bgamma}<1/2}} + \bone_{\brackets*{h\norm{b_\bullet\bgamma}\geq 1/2}}}
\end{equation}
in this case.
\end{proof}

\subsection{Bohr set bounds}\label{sec:bohr-set-bounds}

The main result of this section is~\th~\ref{adhocbohrsetbounds}. It
contains the Bohr set bound exactly as it is applied in the proof
of~\th~\ref{lem:logtolerance_}. \th~\ref{bohrsetbound,lem:oncearound}
are simpler computations that are used in various parts of the proof
of~\th~\ref{adhocbohrsetbounds}.

\begin{lemma}\th\label{bohrsetbound} Let $(\ba,b)\in\ZZ^2\times\NN$
  with $\gcd(\ba, b)=1$, and $\bbeta = (\beta_1,\beta_2)\in\RR^2$. Then for all $0 < \varepsilon < 1$, 
  \begin{equation*}
    \sum_{0\leq h \leq b-1}\bone_{\brackets*{\norm*{h\parens*{\frac{\ba}{b}} +
          \bbeta}<\eps}} \ll \eps b + 1,
  \end{equation*}
  and for all $N\geq 1$,
  \begin{equation*}
  \sum_{1\leq h \leq N}\bone_{\brackets*{\norm*{h\parens*{\frac{\ba}{b}}}<\eps}} \ll (\eps
  b + 1)\frac{N}{b}\bone_{\brackets*{N\geq b}} + \min\set{N,\eps
    b}\bone_{\brackets*{N<b}},
\end{equation*}
where the implicit constants are absolute.
\end{lemma}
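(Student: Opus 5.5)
The plan is to reduce both bounds to a counting statement about a finite subgroup of the torus. Let $G := \set{h\ba/b \bmod 1 : 0\leq h\leq b-1}\subset \TT^2$. Because $\gcd(\ba,b)=1$ (meaning $\gcd(a_1,a_2,b)=1$), the map $h\mapsto h\ba/b$ is injective modulo $b$. Indeed, if $h\ba\equiv \bzero \pmod b$, then $b\mid h a_1$ and $b\mid h a_2$, and also $b\mid hb$. Hence $b$ divides $h\gcd(a_1,a_2,b)=h$. So $G$ is a cyclic subgroup of order exactly $b$, and all of its points lie in $\frac1b\ZZ^2$.

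\textbf{First bound.} The sum counts the points of the translate $G+\bbeta$ that lie in the box $B(\bzero,\eps)$, taken modulo $\ZZ^2$.

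The key observation is that $G$ is a one-dimensional object inside a lattice of covolume $b^{-2}$. Choose $\bu\in\ZZ^2$ with $\bu\cdot\ba\equiv 1 \pmod b$ after a suitable reduction. Such a $\bu$ exists because $\gcd(a_1,a_2,b)=1$, by Bézout applied to $a_1,a_2,b$. This yields the structure of $G$ as a "line" of points.

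A cleaner route is a separation argument. Any two distinct points of $G$ differ by a nonzero element $h\ba/b$ of $G$. I would then bound the counting function directly instead of relying on separation, since separation alone gives only $\eps^2b^2$.

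Concretely, I would project onto a coordinate. Let $g=\gcd(a_1,b)$. The first coordinates $ha_1/b \bmod 1$ run over the subgroup $\frac{g}{b}\ZZ/\ZZ$, which has $b/g$ elements, and each value is hit exactly $g$ times. Within a fixed first coordinate, the $h$ form a coset of $(b/g)\ZZ/b\ZZ$. On that coset the second coordinate moves by $(b/g)a_2/b=a_2/g$, and $\gcd(a_2,g)=1$. So the second coordinates run over all of $\frac1g\ZZ/\ZZ$, shifted.

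Therefore the count factors as a product of two one-dimensional counts:
\[
\#\set{x\in\tfrac gb\ZZ/\ZZ+\beta_1:\norm{x}<\eps}\cdot\#\set{y\in\tfrac1g\ZZ/\ZZ+c:\norm{y}<\eps}\ll(\eps b/g+1)(\eps g+1),
\]
where $c$ is a shift that depends on the fibre but is harmless. Expanding gives $\eps^2b+\eps b/g+\eps g+1$. The first and last terms are fine because $\eps<1$. The cross terms $\eps b/g+\eps g$ are at most $\eps b+\eps b$, since $1\leq g\leq b$. Together these give $\ll\eps b+1$. The only real obstacle is this factorization step: one has to check that the fibres over each first coordinate are full cosets. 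This follows from the computation of the index given above.

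\textbf{Second bound.} If $N\geq b$, split $[1,N]$ into $\lceil N/b\rceil\leq 2N/b$ blocks of $b$ consecutive integers. Each block runs over every residue modulo $b$ exactly once, so it contributes at most the first bound with $\bbeta=\bzero$. This gives $\ll(\eps b+1)N/b$.

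If $N<b$, the count is trivially at most $N$. It is also at most the number of $1\leq h\leq b-1$ with $\norm{h\ba/b}<\eps$, so we need this to be $\ll \eps b$ rather than $\eps b+1$. The term $h=0$ is excluded. For $\eps<1/b$, every nonzero point of $G$ lies in $\frac1b\ZZ^2\setminus\ZZ^2$ and so has $\norm{\cdot}\geq 1/b>\eps$, which makes the count zero. For $\eps\geq1/b$, we have $\eps b+1\leq 2\eps b$. Hence in both cases the count is $\ll\min\set{N,\eps b}$.
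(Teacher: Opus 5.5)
Your proof is correct and takes essentially the paper's route. The paper also sets $d=\gcd(a_1,b)$, writes $h=h_0+kb'$ with $b'=b/d$, and bounds the full-orbit count by $(2\eps b'+1)(2\eps d+1)\le 8\eps b+1$; it then handles $N\ge b$ with at most $2N/b$ full orbits. The only cosmetic difference is the case $N<b$: the paper drops the $+1$ by noting that the origin ($h=0$) is always counted in the full orbit, whereas you split on whether $\eps<1/b$. Both arguments work.
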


\begin{proof}
  Let $\ba=(a_1,a_2)$, $d=\gcd(a_1,b)$, $a_1'=a_1/d$, and $b' =
  b/d$. Then
  \begin{align}
    \sum_{0\leq h \leq
        b-1}\bone_{\brackets*{\norm*{h\parens*{\frac{\ba}{b}} +
            \bbeta}<\eps}} 
    &\leq \sum_{h=0}^{b'-1}\bone_{\brackets*{\norm*{h\parens*{\frac{a_1}{b}} + \beta_1}<\eps}}\cdot\sum_{k= 0}^{d-1}\bone_{\brackets*{\norm*{(h+kb')\parens*{\frac{a_2}{b}} + \beta_2}<\eps}}
      \nonumber\\
    &\leq (2\eps b' + 1)(2\eps d+1) \nonumber \\
    &\leq 8\eps b + 1. \label{eq:fullorbit}
  \end{align}
  This proves the first assertion.

 In particular, when $\bbeta=0$, we have shown that
  $\#S\cap (-\eps, \eps)^2 \leq 8\eps b + 1$, where
  \begin{equation*}
    S = \set*{h\parens*{\frac{\ba}{b}}\mod 1 : h = 0, 1, \dots, b-1}\subset\TT^2.
  \end{equation*}
  Now, if $N<b$, then $\set{h(\ba/b)\pmod 1: h=1, \dots, N}$ is a
  subset of $S$ omitting the point $\bzero$ (which always lies in
  $S \cap (-\varepsilon,\varepsilon)$), and we obtain the bound
  \begin{equation*} \sum_{1\leq h \leq
N}\bone_{\brackets*{\norm*{h\parens*{\frac{\ba}{b}}}<\eps}} \leq
\min\set{N, 8 \eps b}\qquad (N < b)
\end{equation*} from~(\ref{eq:fullorbit}). (The bound $\leq N$ is
trivial in this case.) On the other hand, if $N\geq b$, then
$\set{h(\ba/b)\pmod 1: h=1, \dots, N}$ runs through all residue
classes at most $N/b + 1 \leq 2N/b$ times, and~(\ref{eq:fullorbit})
leads to the bound
\begin{equation*}
  \sum_{1\leq h \leq N}\bone_{\brackets*{\norm*{h\parens*{\frac{\ba}{b}}}<\eps}}
  \leq(8\eps b + 1) \frac{2N}{b} \qquad (N \geq b).
\end{equation*}
The second assertion of the lemma is a combination of these two
statements.
\end{proof}

\begin{lemma}\th\label{lem:oncearound}
  For all $\balpha = (\alpha_1, \alpha_2)\in\RR^2\setminus\QQ^2$, $(\beta_1, \beta_2) \in\RR^2$, and
  $\eps>0$ we have
  \begin{equation*}
    \sum_{1\leq m <1/\norm{\balpha}}\ind{\norm{m\balpha + \bbeta} < \eps} \leq \frac{2\eps}{\norm{\balpha}} + 1. 
  \end{equation*}
\end{lemma}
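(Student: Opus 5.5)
The idea is a spacing argument. The points $m\balpha+\bbeta$ for $1\le m<1/\norm{\balpha}$ move by very small steps, so they cannot pile up in a box of side $2\eps$. I reduce to one coordinate, the one realizing $\norm{\balpha}$. Write $\norm{\balpha}=\max_i\norm{\alpha_i}$ and choose $i$ with $\norm{\alpha_i}=\norm{\balpha}=:\eta$. Since $\balpha\notin\QQ^2$ we have $\eta>0$, so the range $1\le m<1/\eta$ is finite.

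Drop the other coordinate condition; this can only increase the count. It then suffices to show
\begin{equation*}
  \#\set*{1\leq m<1/\eta : \norm{m\alpha_i+\beta_i}<\eps}\leq \frac{2\eps}{\eta}+1 .
\end{equation*}
Replacing $\alpha_i$ by $\alpha_i$ minus its nearest integer does not change $m\alpha_i \bmod 1$. So I may write $\alpha_i=\pm\eta$ with $0<\eta\le 1/2$, and by symmetry take $\alpha_i=\eta$.

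As $m$ runs through $1\le m<1/\eta$, the real numbers $x_m:=m\eta+\beta_i$ are increasing with consecutive gaps exactly $\eta$. They fill an interval $J=[\eta+\beta_i,\,M\eta+\beta_i]$, where $M<1/\eta$, so $J$ has length $<1-\eta<1$.

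The condition $\norm{x_m}<\eps$ says $x_m\in\bigcup_{n\in\ZZ}(n-\eps,n+\eps)$. If $\eps\geq 1/2$, the claimed bound is at least $1/\eta$, which already exceeds the total number of $m$, so the statement is trivial. If $\eps<1/2$, these intervals are disjoint and each has length $2\eps$. An arithmetic progression with step $\eta$ meets an open interval of length $2\eps$ in at most $\lceil 2\eps/\eta\rceil\le 2\eps/\eta+1$ points.

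The main obstacle is that $J$ might meet two of these intervals, around $n$ and $n+1$, which would seem to cost an extra $+1$. Here the length bound $|J|<1-\eta$ does the work. The portion of $J$ inside $(n-\eps,n+\eps)\cup(n+1-\eps,n+1+\eps)$ then consists of an initial piece and a final piece of $J$. Their combined length is at most $2\eps$ minus the overshoot, because $J$ is too short to cover both windows fully.

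More concretely, translate so that the relevant integer is $1$, i.e. consider $y_m=x_m-n$. The set $\set{m: \norm{x_m}<\eps}$ corresponds, modulo $1$, to progression points landing in a single arc of length $2\eps$ on the circle $\RR/\ZZ$. The whole progression wraps less than once around the circle, since its total span is $(M-1)\eta<1$. Unrolled to the line, the points in that arc are therefore $\eta$-separated points inside an interval of length $2\eps$. The separation holds even across the wrap: two progression points in the arc are at real distance $j\eta$ and $1-j\eta$ from each other, and both of these are multiples-type distances at least $\eta$ apart along the arc, using $M\eta<1$.

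This yields at most $2\eps/\eta+1$ points, which is the assertion of the lemma.
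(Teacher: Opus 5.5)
Your proof is correct and follows essentially the same route as the paper. You restrict to the coordinate realizing $\norm{\balpha}$, drop the other condition, and use that the points $m\alpha_i+\beta_i \bmod 1$ for $1\le m<1/\norm{\balpha}$ are $\norm{\balpha}$-separated on the circle, so at most $2\eps/\norm{\balpha}+1$ of them lie in an arc of length $2\eps$. Your check that $\norm{j\eta}=\min\set{j\eta,1-j\eta}\geq\eta$ for $1\le j\le M-1$ (using $M\eta<1$), and your separate handling of $\eps\geq 1/2$, spell out details that the paper's one-line separation claim leaves implicit.
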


\begin{proof}
 Without loss of generality, we may
  assume that $\norm{\balpha}=\norm{\alpha_1}$. Since for all $m$ we
  have $\norm{m\balpha + \bbeta}\geq \norm{m\alpha_1+\beta}$, we obtain
  \begin{align*}
    \sum_{1\leq m <1/\norm{\balpha}}\ind{\norm{m\balpha + \bbeta} < \eps}
    &\leq \sum_{1\leq m <1/\norm{\alpha_1}}\ind{\norm{m\alpha_1 + \beta_1} < \eps} \\
    &= \#\parens*{\underbrace{\set*{(m\alpha_1 + \beta_1) \mod 1 : 1 \leq m < \frac{1}{\norm{\alpha_1}}}}_S \cap \brackets*{(-\eps,\eps)\mod 1}}.
      \intertext{The points in $S$ have a minimum separation of $\norm{\alpha_1}$ on
      the circle, therefore we continue with}
    &\leq \frac{2\eps}{\norm{\alpha_1}} + 1 = \frac{2\eps}{\norm{\balpha}} + 1,
  \end{align*}
  as required.
\end{proof}

\begin{lemma}\th\label{adhocbohrsetbounds}
  Under the hypotheses and notation of~\th~\ref{lem:overlaps}, let
    \begin{multline}
    \label{eq:bone_C}
    \bone_C = \ind{B_\bullet \leq \max\set{2^\bullet D, 2^{\sigma\bullet}}}\parens*{\bone_{\brackets*{2\widehat{D} \geq \frac{1}{B_\bullet}}} \bone_{\brackets*{B_\bullet \leq 2^{\sigma\bullet}}} + \bone_{\brackets*{2^{\sigma\bullet} < B_\bullet}}} \\
    +\ind{B_\bullet > \max\set{2^\bullet D, 2^{\sigma\bullet}}}\parens*{\bone_{\brackets*{2D \geq \frac{1}{b_\bullet}}}\bone_{\brackets*{h\norm{b_\bullet\bgamma}\leq 1/2}} + \bone_{\brackets*{h\norm{b_\bullet\bgamma}>1/2}}}
  \end{multline}
  and
  \begin{equation}\label{eq:Ndefinition}
    N \asymp
    \begin{cases}
      2^\bullet D &\textrm{if } \bullet = \ell, \\
      \frac{Q}{R} 2^\bullet D &\textrm{if } \bullet = k.
    \end{cases}
  \end{equation}
  Then there exists $\omega >0$ such that
  \begin{equation*}
    \frac{1}{N}\sum_{1\leq h\leq N}\ind{\norm{h\bgamma} < D}\bone_C \ll D^\omega + \ind{D^{\omega}\gg \frac{R}{Q}}\ind{\bullet = k}   + \ind{D\gg 1}\ind{\bullet = \ell}
  \end{equation*}
  holds for all $D < 1$.
\end{lemma}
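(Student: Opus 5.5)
The plan is to treat the two brackets of \eqref{eq:bone_C} separately. In each regime I bound the Bohr set
\[
\#\{1\leq h\leq N:\norm{h\bgamma}<D\}
\]
by replacing $\bgamma$ with the relevant rational approximation, $\bA_\bullet/B_\bullet$ or $\ba_\bullet/b_\bullet$, and then applying \th\ref{bohrsetbound} and \th\ref{lem:oncearound}. This mirrors the solution of the model problem in Section~\ref{sec:heur-model-probl}. Throughout, the inputs are:
\begin{itemize}
\item $N\asymp 2^\bullet D$ (respectively $\frac{Q}{R}2^\bullet D$);
\item $\abs{\bgamma-\bA_\bullet/B_\bullet}<2^{-\bullet}$;
\item $\norm{b_\bullet\bgamma}\leq B_\bullet^{-1/2}$ and $b_\bullet\leq 2^\bullet B_\bullet^{-1/2}$, from \eqref{eq:bk} and \eqref{eq:bkbound}.
\end{itemize}
I fix auxiliary exponents $0<\sigma<\tau<1$ and a small $\rho<\sigma$. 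The final $\omega$ is the minimum of the finitely many positive exponents that appear.

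\emph{Regime $B_\bullet\leq 2^{\sigma\bullet}$ with $2\hatD\geq 1/B_\bullet$.} For $h\leq N$ I have $\norm{h\bA_\bullet/B_\bullet}\leq D+h2^{-\bullet}$.
\begin{itemize}
\item If $\bullet=\ell$, then $h2^{-\ell}\ll D$, so the window is $\ll D$.
\item If $\bullet=k$, then $h2^{-k}\ll\frac QR D$, and I split further.
  \begin{itemize}
  \item If $\frac QRD\leq D^\tau$, the window is enlarged to $\eps\asymp D^\tau$.
  \item Otherwise $D^{1-\tau}\gg R/Q$, which is absorbed into the term $\ind{D^\omega\gg R/Q}\ind{\bullet=k}$ (taking $\omega\leq 1-\tau$).
  \end{itemize}
\end{itemize}
With window $\eps\asymp D^\tau$, the second part of \th\ref{bohrsetbound} gives the following.
\begin{itemize}
\item If $N\geq B_\bullet$, the normalized count is $\ll\eps+1/B_\bullet\ll\eps+\hatD\ll D^\tau$.
\item If $N<B_\bullet$, the normalized count is $\ll\eps B_\bullet/N$. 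Here $2\hatD\geq 1/B_\bullet$ gives $D^\tau\gg 1/B_\bullet\geq 2^{-\sigma\bullet}$, that is, $2^{-\bullet}\ll D^{\tau/\sigma}$. Hence
\[
\frac{\eps B_\bullet}{N}\ll D^{\tau-1}2^{(\sigma-1)\bullet}\ll D^{\tau-1+\tau(1-\sigma)/\sigma}=D^{\tau/\sigma-1},
\]
which is a positive power of $D$ precisely because $\tau>\sigma$.
\end{itemize}

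\emph{Regimes with $B_\bullet>2^{\sigma\bullet}$ (both brackets).} Here I work with $b=b_\bullet$ and write $h=mb+s$ with $1\leq s\leq b$. I handle three subcases.
\begin{itemize}
\item \emph{Using $b$-periodicity.} When $h\norm{b\bgamma}\leq 1/2$ and $2D\geq 1/b$, or when $B_\bullet\leq 2^\bullet D$, I use $s\abs{\bgamma-\ba/b}\leq\norm{b\bgamma}\leq 2^{-\sigma\bullet/2}$. The first part of \th\ref{bohrsetbound} (with $\bbeta=mb\bgamma$) bounds each block of $b$ consecutive $h$ by $\ll b(D+2^{-\sigma\bullet/2})+1$. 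The normalized total is then
\[
\ll D+2^{-\sigma\bullet/2}+\tfrac1b+\tfrac bN.
\]
  \begin{itemize}
  \item The term $1/b$ is $\leq 2D$ in the first alternative. In the second I must bound it by \th\ref{lem:oncearound} (see the last subcase), unless $b>D^{-\rho}$, in which case it is $\leq D^{\rho}$.
  \item The term $b/N\leq 2^\bullet B_\bullet^{-1/2}/(2^\bullet D)$ is small, since $B_\bullet>2^{\sigma\bullet}$ and $D$ is bounded below in these cases.
  \item The term $2^{-\sigma\bullet/2}$ becomes a power of $D$ through lower bounds of the form $D\geq 2^{(\sigma-1)\bullet}$. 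In the case $B_\bullet\leq 2^\bullet D$ this follows from $D\geq B_\bullet 2^{-\bullet}$, giving exponent $\sigma/(2(1-\sigma))$.
  \end{itemize}
\item \emph{Using the irrational rotation.} When $h\norm{b\bgamma}>1/2$, so that $N\norm{b\bgamma}\gg 1$, and $b\leq D^{-\rho}$, I instead fix $s$ and apply \th\ref{lem:oncearound} to $m\mapsto m(b\bgamma)+s\bgamma$ over runs of length $1/\norm{b\bgamma}$. This gives
\[
\ll\sum_{s\leq b}\Bigl(\frac{N\norm{b\bgamma}}{b}+1\Bigr)\Bigl(\frac{D}{\norm{b\bgamma}}+1\Bigr)\ll ND+N\norm{b\bgamma}+b+\frac{bD}{\norm{b\bgamma}}.
\]
After dividing by $N$, I use $N\norm{b\bgamma}\gg1$, $\norm{b\bgamma}\leq 2^{-\sigma\bullet/2}$ and $b\leq D^{-\rho}$. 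The result is $\ll D+2^{-\sigma\bullet/2}+D^{1-\rho}$, exactly as in the model computation.
\item \emph{Large $D$ for $\bullet=\ell$.} Any situation where $\bullet=\ell$ and $D$ is not small is covered by the term $\ind{D\gg1}\ind{\bullet=\ell}$.
\end{itemize}

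The main obstacle I expect is the conversion of the $\bullet$-dependent error $2^{-\sigma\bullet/2}$ (and the analogous $B_\bullet$-type losses) into a genuine power $D^\omega$ in every subcase. This requires a lower bound $D\gg 2^{-c\bullet}$ coming from whichever indicator in \eqref{eq:bone_C} is active. In the bracket $B_\bullet>\max\{2^\bullet D,2^{\sigma\bullet}\}$ with $h\norm{b\bgamma}>1/2$, such a bound is not directly available. My first attempt would be to use $N\gg 1/\norm{b\bgamma}\geq B_\bullet^{1/2}>2^{\sigma\bullet/2}$. Together with $N\asymp 2^\bullet D$ (up to $Q/R$), this gives $2^{-\sigma\bullet/2}\ll N^{-1}\ll D^{-1}2^{-\bullet}$. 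I would then feed this back, accepting an extra $\frac{R}{Q}$-indicator loss when $\bullet=k$, exactly as in the $\tau$-splitting above.
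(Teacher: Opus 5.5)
\paragraph{Verdict.} Your first regime ($B_\bullet\le 2^{\sigma\bullet}$) is sound and is essentially the paper's Case~1a/1b; your global split $\frac QR D\lessgtr D^\tau$ and the exponent $\tau/\sigma-1$ are harmless variants. The gaps are all in the regimes with $B_\bullet>2^{\sigma\bullet}$.

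\paragraph{The lower bound on $D$.} What you call the main obstacle is not one, and your workaround does not remove it. It only gives $2^{-\sigma\bullet/2}\ll N^{-1}\ll (2^\bullet D)^{-1}$, which is $O(1)$ but not a power of $D$. What you are missing is \eqref{eq:kgeqj}: $2^{-\bullet}\le D$ holds in every case directly from the definition of $D$. If $\bullet=k$ then $D\ge 2\psi(q)>2^{-k+1}$, and if $\bullet=\ell$ then $D\ge 2\psi(r)>2^{-\ell+1}$. Hence $B_\bullet^{-1/2}<2^{-\sigma\bullet/2}\le D^{\sigma/2}$ throughout. Relatedly, your rotation subcase drops the term $b_\bullet/N\ll b_\bullet\norm{b_\bullet\bgamma}\le D^{-\rho}B_\bullet^{-1/2}\le D^{\sigma/2-\rho}$. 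That term forces $\rho<\sigma/2$, not merely $\rho<\sigma$.

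\paragraph{The case $N<b_\bullet$.} More seriously, your block bound $D+2^{-\sigma\bullet/2}+1/b_\bullet+b_\bullet/N$ is only useful when $N\ge b_\bullet$, and you never treat $N<b_\bullet$. Your claim that $b_\bullet/N$ is small is unjustified. In the first alternative of the second bracket you only know $2D\ge 1/b_\bullet$. With \eqref{eq:bkbound} this gives no better than $b_\bullet/N\ll b_\bullet^2 2^{-\bullet}\le 2^\bullet/B_\bullet<2^{(1-\sigma)\bullet}$, and nothing excludes $N<b_\bullet$ there. The same problem arises for $b_\bullet>D^{-\rho}$ with $h\norm{b_\bullet\bgamma}>1/2$, a combination your case list does not explicitly cover.

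The missing step is the paper's Case~2c. For $1\le h\le N<b_\bullet$ the points $h\ba_\bullet/b_\bullet$ avoid $\bzero$. So the partial-orbit part of Lemma~\ref{bohrsetbound}, with window $D+N/(b_\bullet B_\bullet^{1/2})$, gives a normalized count
\[
\le \frac{Db_\bullet}{N}+B_\bullet^{-1/2}\ll B_\bullet^{-1/2}\le D^{\sigma/2},
\]
using $N\gg 2^\bullet D$ and \eqref{eq:bkbound}.

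\paragraph{The first bracket with $B_\bullet>2^{\sigma\bullet}$.} For $2^{\sigma\bullet}<B_\bullet\le 2^\bullet D$ with $b_\bullet\le D^{-\rho}$, you defer to the rotation argument via Lemma~\ref{lem:oncearound}. That argument needs $N\norm{b_\bullet\bgamma}\gg 1$, and no indicator in $\bone_C$ supplies this in the first bracket. It does hold: comparing $\bA_\bullet/B_\bullet$ with $\ba_\bullet/b_\bullet$ and using the minimality of $B_\bullet$ gives $B_\bullet\norm{b_\bullet\bgamma}>1/2$. But you do not show it. The paper avoids the issue by treating the whole first bracket with $\bA_\bullet/B_\bullet$, exactly as in your first regime. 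The only change is that $1/B_\bullet<2^{-\sigma\bullet}\le D^{\sigma}$ takes the place of the indicator $2\hatD\ge 1/B_\bullet$.
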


\begin{remark*}
  The proof reveals that one can take $\omega = 1/6$.  We also point
  out that in the above estimate, the $\ll$ symbols in the indicators
  only depend on the implied constants arising in the definition of
  $N$ in \eqref{eq:Ndefinition}, that is, if
  $\frac{1}{c}2^{\bullet}D \leq N \leq c2^{\bullet}D$ for some
  $c > 0$, then
  \begin{equation*}
    \frac{1}{N}\sum_{1\leq h\leq N}\ind{\norm{h\bgamma} < D}\bone_C
    \ll D^\omega + \ind{D^{\omega}\geq
      C_1\parens*{\frac{R}{Q}}}\ind{\bullet = k} + \ind{D\geq
      C_2}\ind{\bullet = \ell}
  \end{equation*}
  where $C_1$ and $C_2$ are positive constants depending only on $c$.
\end{remark*}

\begin{proof}
  We proceed in cases.
  
  \subsection*{Case 1:
    $B_\bullet \leq \max\set{2^{\bullet}D ,2^{\sigma\bullet}}$}

  Fix a parameter $\tau \in (\sigma, 1)$ and split the sum:
  \begin{align}
    \frac{1}{N}\sum_{1 \leq h\leq N}\bone_{\brackets*{\norm*{h \bgamma}<D}}\bone_{C}
    &= \frac{1}{N}\parens*{\sum_{\substack{1 \leq h \leq N \\ \hatD(h)\leq D^\tau}}\bone_{\brackets*{\norm*{h \bgamma}<D}} + \sum_{\substack{1< h\leq N \\ \hatD(h)>D^\tau}}\bone_{\brackets*{\norm*{h \bgamma}<D}}}\bone_C \nonumber \\
    &\leq \frac{1}{N}\parens*{\sum_{\substack{1 \leq h \leq N \\ \hatD(h)\leq D^\tau}}\bone_{\brackets*{\norm*{h \bgamma}<D^\tau}} + \sum_{\substack{1< h\leq N \\ \hatD(h)>D^\tau}}\bone_{\brackets*{\norm*{h \bgamma}<D}}}\bone_C. \label{eq:jan2nd}
  \end{align}

  \subsubsection*{Case 1a: First term of~(\ref{eq:jan2nd})}
  By the triangle inequality,
  \begin{equation}\label{eq:triangle}
   \norm*{h\parens*{\frac{\bA_\bullet}{B_\bullet}}}  \leq \norm{h\bgamma}  + \abs*{h(\bgamma - (\bA_\bullet/B_\bullet))} \leq \norm{h\bgamma} + \hatD(h),
  \end{equation}
  so the first sum is bounded by
  \begin{align}
    \frac{1}{N}\sum_{\substack{1 \leq h \leq N \\ \hatD(h)\leq D^\tau}}\bone_{\brackets*{\norm*{h \bgamma}<D^\tau}}\bone_C
    &\leq \frac{1}{N}\sum_{1 \leq h \leq N}\bone_{\brackets*{\norm*{h \parens*{\frac{\bA_\bullet}{B_\bullet}}}<2D^\tau}}\bone_C \nonumber\\
    &\ll \frac{1}{N} \parens*{(D^\tau B_\bullet + 1) \frac{N}{B_\bullet} + B_\bullet D^\tau\bone_{\brackets*{N < B_\bullet}}}\bone_C \tag{\th~\ref{bohrsetbound}}\nonumber\\
    &\ll \frac{1}{N} \parens*{(D^\tau B_\bullet + 1) \frac{N}{B_\bullet} + B_\bullet D^\tau\bone_{\brackets*{2^\bullet D \ll B_\bullet}}}\bone_C,\nonumber
      \intertext{recalling that $N \asymp 2^\bullet D$ or  $N \asymp \frac{Q}{R}2^\bullet D$. Recalling $\hatD(h)\leq D^\tau$ and bounding $\bone_C$ accordingly, we continue with}
    &\ll  \parens*{D^\tau + \frac{1}{B_\bullet} + \frac{B_\bullet D^\tau}{N}\ind{2^\bullet D \ll B_\bullet}}\parens*{\bone_{\brackets*{2D^\tau \geq \frac{1}{B_\bullet}}} \bone_{\brackets*{B_\bullet \leq 2^{\sigma\bullet}}} + \bone_{\brackets*{2^{\sigma\bullet} < B_\bullet}}}\nonumber\\
    &\ll \parens*{D^\tau + \frac{1}{B_\bullet} }\parens*{\bone_{\brackets*{2D^\tau \geq \frac{1}{B_\bullet}}} \bone_{\brackets*{B_\bullet \leq 2^{\sigma\bullet}}} + \bone_{\brackets*{2^{\sigma\bullet} < B_\bullet}}} + \frac{2^{\sigma\bullet}D^\tau}{2^\bullet D}\nonumber\\
    &\ll D^\tau + 2^{-\sigma\bullet}+ 2^{\bullet(\sigma-1)}D^{\tau-1} \nonumber\\
    &\ll D^{\sigma}+ D^{\tau-\sigma},\label{eq:COMB1}
  \end{align}
  having noted that the definition of $D$ implies
  \begin{equation}\label{eq:kgeqj}
    2^{-\bullet} \leq D.
  \end{equation}

\subsubsection*{Case 1b: Second term in~(\ref{eq:jan2nd})}

The second sum in~\eqref{eq:jan2nd} is nonempty only if
\begin{equation*}\label{eq:8jan2}
  D^\tau < \hatD(h) = h \abs*{\bgamma - \frac{\bA_\bullet}{B_\bullet}} \leq \frac{N}{B_\bullet}\norm{B_\bullet\bgamma}.
\end{equation*}
Combining this with~(\ref{eq:Bkbound}) and~(\ref{eq:Ndefinition}) gives
\begin{equation}\label{eq:case1b}
  D \gg
  \begin{cases}
    1 &\textrm{if } \bullet= \ell, \\
    \parens*{\frac{R}{Q}}^{\frac{1}{1-\tau}} &\textrm{if } \bullet= k,
  \end{cases}
\end{equation}
where the implicit constant depends only on the implicit constant
appearing in~(\ref{eq:Ndefinition}).

  \subsection*{Case 2: $B_\bullet>\max\set{2^{\bullet} D,2^{\sigma\bullet}}$}

  Fix a parameter $0 < \rho < \sigma/2$.
  
  \subsubsection*{Case 2a: $N\geq b_\bullet$ and $2b_\bullet >D^{-\rho}$}

  By writing
  $h=mb_\bullet + s, 1 \leq s \leq b$, we have
  $h\bgamma = mb_\bullet\bgamma + s(\ba_\bullet/b_\bullet) + s(\bgamma
  - \ba_\bullet/b_\bullet)$, and
  \begin{align*}
    s\abs*{\bgamma - \frac{\ba_\bullet}{b_\bullet}}
    \leq \frac{1}{B_\bullet^{1/2}},
  \end{align*}
  and the sum is bounded by
  \begin{align}
    \frac{1}{N}\sum_{1\leq h \leq N}\bone_{\brackets*{\norm*{h\bgamma} < D}}
    &\leq \frac{1}{N}\sum_{0\leq m \leq \frac{N}{b_\bullet}}\sum_{s=1}^{b_\bullet}\bone_{\brackets*{\norm*{mb_\bullet\bgamma + s\parens*{\frac{\ba_\bullet}{b_\bullet}}} < D + B_\bullet^{-1/2}}} \nonumber\\
    &\ll \frac{1}{N}\parens*{\frac{N}{b_\bullet} +1}\parens*{b_\bullet D + b_\bullet B_\bullet^{-1/2}+1} \tag{\th~\ref{bohrsetbound}} \\
    &\ll D + \frac{1}{B_\bullet^{1/2}}+\frac{1}{b_\bullet} \nonumber\\
    &\ll 2^{-\sigma\bullet/2} +D^\rho \leq D^\rho \label{eq:COMB6}.
  \end{align}

  \subsubsection*{Case 2b: $N\geq b_\bullet$ and $2b_\bullet \leq D^{-\rho}$}
  Assume now that $N\geq b_\bullet$ and $2b_\bullet \leq D^{-\rho}$.
  Split the sum
  \begin{equation}\label{eq:jan2netflix}
    \frac{1}{N}\sum_{1\leq h \leq N}\bone_{\brackets*{\norm*{h\bgamma} < D}}\bone_C
    \ll\frac{1}{N}\parens*{\sum_{\substack{1\leq h \leq N \\ h\norm{b_\bullet \bgamma} \leq 1/2}}\bone_{\brackets*{\norm*{h\bgamma} < D}}
    + \sum_{\substack{1\leq h \leq N \\ h\norm{b_\bullet \bgamma} > 1/2}}\bone_{\brackets*{\norm*{h\bgamma} < D}}}\bone_C.
  \end{equation}
  By~\th~\ref{lem:overlaps}, the condition
  $h\norm{b_\bullet\bgamma}\leq 1/2$ applied in $\bone_C$ implies
  $\frac{1}{b_\bullet} \leq 2 D$, so the first sum is empty. The
  second sum in~\eqref{eq:jan2netflix} is nonempty only if
  \begin{equation}\label{big1/2}
    \frac{1}{2} < h\norm{b_\bullet\bgamma} \leq N \norm{b_\bullet\bgamma}.
  \end{equation}
    \th~\ref{lem:oncearound} implies that among any
    $\ceil{1/\norm{b_{\bullet}\bgamma}}$ consecutive values of $m$
    there are at most
  \begin{equation*}
    \frac{2 (D+ B_\bullet^{-1/2})}{\norm{b_{\bullet}\bgamma}}+2 \overset{(\ref{eq:bk})}{\ll} \frac{D+ B_\bullet^{-1/2}}{\norm{b_{\bullet}\bgamma}}
  \end{equation*}
  for which 
  \begin{equation*}
    \ind{\norm*{m b_\bullet \bgamma + s\parens*{\frac{\ba_\bullet}{b_\bullet}}} < D+ B_\bullet^{-1/2}}=1,
  \end{equation*}
  regardless of $s$. With these observations and arguing as in the previous case, the second sum
  in~(\ref{eq:jan2netflix}) is bounded by
  \begin{align}
    \frac{1}{N}\sum_{\substack{1\leq h \leq N \\ h\norm{b_\bullet \bgamma} > 1/2}}\bone_{\brackets*{\norm*{h\bgamma} < D}}\bone_C
    &\leq \frac{1}{N}\sum_{s=1}^{b_\bullet}\sum_{0\leq m \leq \frac{N}{b_\bullet}}\bone_{\brackets*{\norm*{m(b_{\bullet}\bgamma) + s\parens*{\frac{\ba_\bullet}{b_\bullet}}} <  D+ B_\bullet^{-1/2}}} \nonumber\\
    &\ll \frac{b_\bullet}{N}\parens*{\frac{N}{b_\bullet}\norm{b_\bullet\bgamma} + 1}\frac{ D+ B_\bullet^{-1/2}}{\norm{b_\bullet\bgamma}} \nonumber \\
    &\ll \parens*{1 + \frac{b_\bullet}{N\norm{b_\bullet \bgamma}}}( D+ B_\bullet^{-1/2}) \nonumber\\
    &\overset{\eqref{big1/2}}{\ll} b_\bullet\parens*{D+ B_\bullet^{-1/2}} \nonumber\\
    &\ll D^{1-\rho} + D^{-\rho}2^{-\sigma\bullet/2} \leq D^{1-\rho} + D^{\frac{\sigma}{2}-\rho}.\label{eq:COMB7}
  \end{align}

  \subsubsection*{Case 2c: $N< b_\bullet$}
   In this case, for all $1 \leq h \leq N$ we have
  \begin{align*}
    \norm*{h\parens*{\frac{\ba_\bullet}{b_\bullet}}}
    &=\norm*{h\bgamma - h\parens*{\bgamma - \frac{\ba_\bullet}{b_\bullet}}} \\
    &\leq\norm*{h\bgamma} + \abs*{h\parens*{\bgamma - \frac{\ba_\bullet}{b_\bullet}}} \\
    &\leq \norm*{h\bgamma} + \frac{N}{b_\bullet B_\bullet^{1/2}},
  \end{align*}
  using \eqref{eq:bk}. Thus we have
  \begin{align}
    \frac{1}{N}\sum_{1\leq h \leq N} \ind{\norm*{h\bgamma}<D}
    &\leq \frac{1}{N}\sum_{1\leq h \leq N} \ind{\norm*{h\parens*{\frac{\ba_\bullet}{b_\bullet}}}<D + \frac{N}{b_\bullet B_\bullet^{1/2}}}\nonumber  \\
    &= \frac{1}{N}\parens*{\min\set*{N, Db_\bullet + \frac{N}{B_\bullet^{1/2}}}} \tag{\th~\ref{bohrsetbound}} \\
    &= \min\set*{1, \frac{Db_\bullet}{N} + \frac{1}{B_\bullet^{1/2}}} \nonumber \\
    &\ll \frac{1}{B_\bullet^{1/2}} \leq 2^{-\sigma\bullet/2} \overset{(\ref{eq:kgeqj})}{\leq} D^{\sigma/2}\label{eq:COMB5},
  \end{align}
  by $N\gg 2^\bullet D$ and~(\ref{eq:bkbound}), as well as the case
  assumption that $B_\bullet > 2^{\sigma\bullet}$.

  The lemma follows upon
  combining~(\ref{eq:COMB1}),~(\ref{eq:case1b}),~(\ref{eq:COMB5}),~(\ref{eq:COMB6}),
  and~(\ref{eq:COMB7}), and taking $\omega>0$ to be the smallest power
  of $D$ that appears, or $1-\tau$, whichever is smaller.
\end{proof}

\begin{remark*}
  For concreteness, we may choose $\sigma = 2/3$, $\tau = 5/6$, and
  $\rho = 1/6$, in which case we obtain $\omega = 1/6$.
\end{remark*}

\section{Proof of
  Lemma~\ref{lem:logtolerance_}}\label{sec:proof-lemma-refl}

Let $R\leq Q$. For $j \geq 1$, let
\begin{equation*}
  D_j = \set*{(q,r)\in \parens*{(Q,2Q]\times (R, 2R]}\setminus F : D(q,r)\in (2^{-j}, 2^{-j+1}], r< q},
\end{equation*}
put
\begin{equation*}
  D_0 = \set*{(q,r)\in (Q,2Q]\times (R, 2R] : D(q,r)>1, r< q},
\end{equation*}
and
\begin{equation*}
  F = \set*{(q,r)\in (Q, 2Q]\times (R, 2R] : r < q, r\mid q,\, D(q,r)\leq 1}.
\end{equation*}
We have
\begin{gather*}
  \lambda(E_q'\cap E_r') \ll \lambda(E_q')\lambda(E_r') \qquad ((q,r)\in D_0) \\
  \lambda(E_q'\cap E_r') \ll \parens*{\frac{r}{q}}^2\lambda(E_q') \qquad ((q,r)\in F)
\end{gather*}
by~\th~\ref{lem:overlaps}. Then
\begin{multline}
  \sum_{q\in (Q, 2Q]} \sum_{\substack{r\in (R, 2R] \\ r < q}}\lambda(E_q'\cap E_r')
  = \sum_{j\geq 0}\sum_{(q,r)\in D_j} \lambda(E_q'\cap E_r')  + \sum_{(q,r)\in F}\lambda(E_q'\cap E_r') \\
  \ll \sum_{q\in (Q, 2Q]}\lambda(E_q')\sum_{r\in (R, 2R]}\lambda(E_r') + \sum_{j\geq 1}\sum_{(q,r)\in D_j} \lambda(E_q'\cap E_r') + \sum_{(q,r)\in F}\lambda(E_q'\cap E_r'), \label{eq:prestar}
\end{multline}
and
\begin{align}
  \sum_{(q,r)\in F}\lambda(E_q'\cap E_r')
  &\ll \sum_{q\in (Q, 2Q]} \sum_{\substack{r\in (R, 2R] \\ r < q, r\mid q}}\parens*{\frac{r}{q}}^2\lambda(E_q') \nonumber \\
  &\leq \sum_{q\in (Q, 2Q]} \parens*{\sum_{\substack{d\in [q/(2R), q/R) \\ d\mid q}}\frac{1}{d^2}}\lambda(E_q') \nonumber \\
  &\leq \sum_{q\in (Q, 2Q]} \parens*{\sum_{n\geq \frac{Q}{2R}}\frac{1}{n^2}}\lambda(E_q') \nonumber \\
  &\ll \frac{R}{Q}\sum_{q\in (Q, 2Q]}\lambda(E_q'). \label{eq:F}
\end{align}
Using~\th~\ref{lem:overlaps} again, we have
\begin{align}
  \sum_{j\geq 1}\sum_{(q,r)\in D_j} \lambda(E_q'\cap E_r')
  &\ll \sum_{j\geq 1}\sum_{(q,r)\in D_j} \lambda(E_q')\lambda(E_r') \nonumber \\
  &\qquad + \sum_{j\geq 1}\sum_{(q,r)\in D_j} \frac{1}{D^2}\lambda(E_q')\lambda(E_r')\bone_{\brackets*{\norm*{\frac{q-r}{\gcd(q,r)}\bgamma}<D}}\bone_C\label{eq:star}
\end{align}
where $\bone_C$ is as in~(\ref{eq:bone_C}). The first sum on the
right-hand side is an acceptable contribution to~(\ref{eq:prestar}),
and it only remains to bound the expression~(\ref{eq:star}) in the
second sum. We will split it over the sets
\begin{align*}
  D_j^1 &= \set*{(q,r)\in D_j : \frac{\psi(r)}{r} < \frac{\psi(q)}{q}, \psi(r) \leq \psi(q)}, \\
  D_j^2 &= \set*{(q,r)\in D_j : \frac{\psi(r)}{r} \geq \frac{\psi(q)}{q}, \psi(r) \geq \psi(q)}, \textrm{ and} \\
  D_j^3 &= \set*{(q,r)\in D_j : \frac{\psi(r)}{r} \geq \frac{\psi(q)}{q}, \psi(r) < \psi(q)}.
\end{align*}
Denote those collections $\calD_i = \set{D_j^i}_{j\geq 1}\, (i=1,2,3)$.

\subsection{Case 1: (\ref{eq:star}) over $\calD_1$}

For $(q,r)\in D_j^1$ we have
\begin{equation*}
  \psi(r) \leq \frac{r}{q}\psi(q) \leq \frac{2R}{Q}\psi(q)
\end{equation*}
so we may bound
\begin{multline*}
  \sum_{j\geq 1}\sum_{(q,r)\in D_j^1} \frac{1}{D^2}\lambda(E_q')\lambda(E_r')\bone_{\brackets*{\norm*{\frac{q-r}{\gcd(q,r)}\bgamma}<D}}\bone_C \\
  \ll \sum_{j\geq 1}\sum_{k\geq 1} \sum_{\ell \geq k + L-2}\sum_{\substack{(q,r)\in D_j^1 \\ \psi(q) \in \left(2^{-k}, 2^{-k+1}\right] \\ \psi(r)\in \left(2^{-\ell}, 2^{-\ell+1}\right]}}\frac{1}{D^2}\lambda(E_q')\lambda(E_r')\bone_{\brackets*{\norm*{\frac{q-r}{\gcd(q,r)}\bgamma}<D}}\bone_C \\
  \ll \sum_{j\geq 1}\sum_{k\geq 1} \sum_{\ell \geq k+L-2} 2^{2j-2k-2\ell} \sum_{\substack{(q,r)\in D_j^1 \\ \psi(q) \in \left(2^{-k}, 2^{-k+1}\right] \\ \psi(r)\in \left(2^{-\ell}, 2^{-\ell+1}\right]}}\bone_{\brackets*{\norm*{\frac{q-r}{\gcd(q,r)}\bgamma}<D}}\bone_C,
\end{multline*}
where $L = \log^+ (Q/R)$ and $\bone_C$ is as
  in~(\ref{eq:bone_C}).  Further, since
\begin{equation*}
  D = \frac{\psi(q)r}{\gcd(q,r)}\in (2^{-j}, 2^{-j+1}],
\end{equation*}
we have
\begin{equation*}
  \frac{r}{\gcd(q,r)} \in \left(2^{k-j-1}, 2^{k-j+1}\right]
\end{equation*}
whenever $\psi(q) \in (2^{-k}, 2^{-k+1}]$, which implies that we may
restrict to $k\geq j-1$ and bound the expression above by

\begin{multline*}
  \ll \sum_{j\geq 1}\sum_{k\geq j-1} \sum_{\substack{q\in (Q,2Q] \\ \psi(q)\in \left(2^{-k}, 2^{-k+1}\right]}}\sum_{\ell \geq k+L-2} 2^{2j-2k-2\ell} \sum_{\substack{r \in (R, 2R] \\ \frac{r}{\gcd(q,r)} \in \left(2^{k-j-1}, 2^{k-j+1}\right] \\ \psi(r)\in (2^{-\ell}, 2^{-\ell+1}]}}\bone_{\brackets*{\norm*{\frac{q-r}{\gcd(q,r)}\bgamma}<2^{-j+1}}}\bone_C \\
  \ll \sum_{j\geq 1}\sum_{k\geq j- 1} \sum_{\substack{q\in (Q,2Q] \\ \psi(q)\in \left(2^{-k}, 2^{-k+1}\right]}}\sum_{\ell \geq k + L - 2} 2^{2j-2k-2\ell}\sum_{e \in \left(2^{k-j-1}, 2^{k-j+1}\right]}\sum_{\substack{r \in (R, 2R] \\ \frac{r}{\gcd(q,r)} =e}}\bone_{\brackets*{\norm*{\frac{q-r}{\gcd(q,r)}\bgamma}<2^{-j+1}}} \bone_C.
\end{multline*}
For $q\in (Q, 2Q], r\in (R,2R]$ with
\begin{equation*}
  \frac{r}{\gcd(q,r)} \in \left(2^{k-j-1}, 2^{k-j+1}\right]
\end{equation*}
and $r<q$, we have
\begin{equation*}
  1\leq \abs*{\frac{q-r}{\gcd(q,r)}} \leq \frac{Q}{R}2^{k-j+2}.
\end{equation*}
Furthermore, for each fixed $q\in (Q, 2Q]$ and
$e\in (2^{k-j-1}, 2^{k-j+1}]$, the values of
\begin{equation*}
  h = \frac{q-r}{\gcd(q,r)}
\end{equation*}
are distinct as $r$ ranges through $(R, 2R]$. Bounding the $e$-sum
trivially by $\ll 2^{k-j}$ and extending the $h$-sum over its maximal
range, we have
\begin{align*}
  &\ll \sum_{j\geq 1}\sum_{k\geq j- 1} \sum_{\substack{q\in (Q,2Q] \\ \psi(q)\in \left(2^{-k}, 2^{-k+1}\right]}}\sum_{\ell \geq k + L - 2} 2^{j-k-2\ell}\sum_{1 \leq h \leq (Q/R)2^{k-j+2}}\bone_{\brackets*{\norm*{h\bgamma}<2^{-j+1}}} \bone_C \nonumber \\
  &\ll \parens*{\frac{Q}{R}} \sum_{j\geq 1}\sum_{k\geq j - 1} \sum_{\substack{q\in (Q,2Q] \\ \psi(q)\in \left(2^{-k}, 2^{-k+1}\right]}}\sum_{\ell \geq k + L - 2} 2^{-2\ell}  \underbrace{\parens*{\frac{1}{N}\sum_{1\leq h\leq N}\bone_{\brackets*{\norm*{h\bgamma}<2^{-j+1}}} \bone_C}}_{N:=(Q/R)2^{k-j+2}} \\
    &\ll \parens*{\frac{Q}{R}} \sum_{j\geq 1}\sum_{k\geq j - 1} \sum_{\substack{q\in (Q,2Q] \\ \psi(q)\in \left(2^{-k}, 2^{-k+1}\right]}}\sum_{\ell \geq k + L - 2} 2^{-2\ell} \parens*{2^{-\omega j} + \ind{2^{-j+1}\gg\parens*{\frac{R}{Q}}^{1/\omega}}}, 
\end{align*}
by~\th~\ref{adhocbohrsetbounds}. Summing over $\ell$ gives
\begin{align}
  \ll &\parens*{\frac{R}{Q}}\sum_{j\geq 1} \sum_{k\geq j-1}  \parens*{2^{-\omega j} + \ind{2^{-j+1}\gg\parens*{\frac{R}{Q}}^{1/\omega}}}\sum_{\substack{q\in (Q,2Q] \\ \psi(q)\in \left(2^{-k}, 2^{-k+1}\right]}}\lambda(E_q') \nonumber \\
  &\ll \parens*{\frac{R}{Q}}\sum_{k\geq 1}\parens*{\sum_{j\geq 1} 2^{-\omega j}}\sum_{\substack{q\in (Q,2Q] \\ \psi(q)\in \left(2^{-k}, 2^{-k+1}\right]}}\lambda(E_q') \nonumber\\
 &\qquad + \parens*{\frac{R}{Q}}\sum_{k\geq 1}\parens*{\sum_{j\geq 1}  \ind{2^{-j+1}\gg\parens*{\frac{R}{Q}}^{1/\omega}}}\sum_{\substack{q\in (Q,2Q] \\ \psi(q)\in \left(2^{-k}, 2^{-k+1}\right]}}\lambda(E_q') \nonumber \\
  &\ll \parens*{\frac{RL}{Q}} \sum_{q\in (Q,2Q]}\lambda(E_q'), \label{eq:D1punchline}
\end{align}
recalling that the convention $\log^+ = \max\set{1,\log_2}$ implies that
$L\geq 1$.

\subsection{Case 2: (\ref{eq:star}) over $\calD_2$}

For $(q,r)\in D_j^2$ we have
\begin{equation*}
  \psi(r) \geq  \psi(q) \quad\textrm{and}\quad \bullet=\ell, 
\end{equation*}
so we may bound
\begin{multline*}
  \sum_{j\geq 1}\sum_{(q,r)\in D_j^2} \frac{1}{D^2}\lambda(E_q')\lambda(E_r')\bone_{\brackets*{\norm*{\frac{q-r}{\gcd(q,r)}\bgamma}<D}}\bone_C \\
  \ll \sum_{j\geq 1}\sum_{\ell \geq 1}\sum_{k\geq \ell}\sum_{\substack{(q,r)\in D_j^2 \\ \psi(r)\in (2^{-\ell}, 2^{-\ell + 1}] \\ \psi(q) \in (2^{-k}, 2^{-k + 1}]}} \frac{1}{D^2}\lambda(E_q')\lambda(E_r')\bone_{\brackets*{\norm*{\frac{q-r}{\gcd(q,r)}\bgamma}<D}}\bone_C \\
                  \ll \sum_{j\geq 1}\sum_{\ell \geq 1}\sum_{k\geq \ell} 2^{2j-2k-2\ell} \sum_{\substack{(q,r)\in D_j^2 \\ \psi(q) \in (2^{-k}, 2^{-k+1}] \\ \psi(r) \in (2^{-\ell}, 2^{-\ell+1}]}}\bone_{\brackets*{\norm*{\frac{q-r}{\gcd(q,r)}\bgamma}<D}}\bone_C.
\end{multline*}
Now we have
\begin{equation*}
  D = \frac{\psi(r)q}{\gcd(q,r)} \in (2^{-j}, 2^{-j+1}],
\end{equation*}
and for $\psi(r)\in (2^{-\ell}, 2^{-\ell+1}]$,
\begin{equation*}
  \frac{q}{\gcd(q,r)} \in \left(2^{\ell-j-2}, 2^{\ell-j+2}\right]\qquad\textrm{and}\quad\frac{r}{\gcd(q,r)} \in \left(\frac{R}{Q}2^{\ell-j-2}, \frac{R}{Q}2^{\ell-j+2}\right]. 
\end{equation*}
We continue with
\begin{equation*}
  \ll \sum_{j\geq 1}\sum_{\ell \geq 1}\sum_{k \geq\ell}\sum_{\substack{r\in (R, 2R] \\ \psi(r) \in (2^{-\ell}, 2^{-\ell+1}]}} 2^{2j-2k-2\ell} \sum_{e \leq \frac{R}{Q}2^{\ell-j+2}}\sum_{\substack{q\in (Q, 2Q] \\ \frac{r}{\gcd(q,r)}= e}}\bone_{\brackets*{\norm*{\frac{q-r}{\gcd(q,r)}\bgamma}<2^{-j+1}}}\bone_C.
\end{equation*}
For $q\in (Q, 2Q], r\in (R,2R]$ with
\begin{equation*}
  \frac{r}{\gcd(q,r)} \leq  \frac{R}{Q}2^{\ell-j+2}
\end{equation*}
we have
\begin{equation*}
  1 \leq \abs*{\frac{q-r}{\gcd(q,r)}} \leq 2^{\ell-j+3}.
\end{equation*}
Furthermore, for each fixed $r\in (R, 2R]$ and $e\leq (R/Q)2^{\ell-j+2}$, the
values of
\begin{equation*}
  h = \frac{q-r}{\gcd(q,r)}
\end{equation*}
are distinct as $q$ ranges through $(Q, 2Q]$. Bounding the $e$-sum
trivially by $\ll (R/Q)2^{\ell-j}$ and extending the $h$-sum over its maximal
range, we have
\begin{align*}
  &\ll \parens*{\frac{R}{Q}}\sum_{j\geq 1}\sum_{\ell\geq 1}\sum_{k\geq \ell}\sum_{\substack{r\in (R, 2R] \\ \psi(r) \in (2^{-\ell}, 2^{-\ell+1}]}}  2^{j-\ell-2k}\sum_{1 \leq h \leq 2^{\ell-j+3}}\bone_{\brackets*{\norm*{h\bgamma}<2^{-j+1}}}\bone_C \\
  &\ll \parens*{\frac{R}{Q}}\sum_{j\geq 1}\sum_{\ell\geq 1}\sum_{k\geq \ell}\sum_{\substack{r\in (R, 2R] \\ \psi(r) \in (2^{-\ell}, 2^{-\ell+1}]}}  2^{-2k}\underbrace{\parens*{\frac{1}{N}\sum_{1 \leq h \leq N}\bone_{\brackets*{\norm*{h\bgamma}<2^{-j+1}}}\bone_C}}_{N=2^{\ell-j+3}} \\
  &\ll \parens*{\frac{R}{Q}}\sum_{j\geq 1}\sum_{\ell\geq 1}\sum_{k\geq \ell}\sum_{\substack{r\in (R, 2R] \\ \psi(r) \in (2^{-\ell}, 2^{-\ell+1}]}}  2^{-2k}\parens*{2^{-\omega j} + \ind{2^{-j+1} \gg 1}}
\end{align*}
by~\th~\ref{adhocbohrsetbounds}. Summing over $k$ and rearranging
gives
\begin{equation}\label{eq:D2punchline}
  \ll \parens*{\frac{R}{Q}}\parens*{\sum_{j\geq 1}\parens*{2^{-\omega j} + \ind{2^{-j+1} \gg 1}}}\sum_{\ell\geq 1} \sum_{\substack{r\in (R, 2R] \\ \psi(r) \in (2^{-\ell}, 2^{-\ell+1}]}} \psi(r)^2
  \ll \parens*{\frac{R}{Q}}\sum_{r\in (R, 2R]} \lambda(E_r').
\end{equation}

\subsection{Case 3: (\ref{eq:star}) over $\calD_3$}

For $(q,r)\in D_j^3$ we have
\begin{equation*}
  \frac{R}{2Q}\psi(q) \leq \psi(r) \leq \psi(q), \quad\textrm{and}\quad \bullet = \ell,
\end{equation*}
so we may bound
\begin{multline*}
  \sum_{j\geq 1}\sum_{(q,r)\in D_j^3} \frac{1}{D^2}\lambda(E_q')\lambda(E_r')\bone_{\brackets*{\norm*{\frac{q-r}{\gcd(q,r)}\bgamma}<D}}\bone_C \\
  \ll \sum_{j\geq 1}\sum_{k \geq 1}\sum_{\ell = k}^{k + L+2} \sum_{\substack{(q,r)\in D_j^3 \\ \psi(q) \in (2^{-k}, 2^{-k+1}] \\ \psi(r) \in (2^{-\ell}, 2^{-\ell+1}]}} \frac{1}{D^2}\lambda(E_q')\lambda(E_r') \bone_{\brackets*{\norm*{\frac{q-r}{\gcd(q,r)}\bgamma}<D}}\bone_C \\
  \ll \sum_{j\geq 1}\sum_{k \geq 1}\sum_{\ell = k}^{k + 2 +  L} 2^{2j-2k-2\ell}\sum_{\substack{(q,r)\in D_j^3 \\ \psi(q) \in (2^{-k}, 2^{-k+1}] \\ \psi(r) \in (2^{-\ell}, 2^{-\ell+1}]}}\bone_{\brackets*{\norm*{\frac{q-r}{\gcd(q,r)}\bgamma}<D}}\bone_C.
\end{multline*}
Now we have
\begin{equation*}
  D = \frac{\psi(r)q}{\gcd(q,r)} \in (2^{-j}, 2^{-j+1}],
\end{equation*}
and for $\psi(r)\in (2^{-\ell}, 2^{-\ell+1}]$,
\begin{equation*}
  \frac{q}{\gcd(q,r)} \in \left(2^{\ell-j-2}, 2^{\ell-j+2}\right]\qquad\textrm{and}\quad\frac{r}{\gcd(q,r)} \in \left(\frac{R}{Q}2^{\ell-j-2}, \frac{R}{Q}2^{\ell-j+2}\right].
\end{equation*}
We continue with 
\begin{equation*}
  \ll \sum_{j\geq 1}\sum_{k \geq 1}\sum_{\ell = k}^{k + 2 +  L}\sum_{\substack{q\in (Q, 2Q] \\ \psi(q) \in (2^{-k}, 2^{-k+1}]}} 2^{2j-2k-2\ell} \sum_{e \leq \frac{R}{Q}2^{\ell-j+2}}\sum_{\substack{r\in (R, 2R] \\ \frac{r}{\gcd(q,r)}= e}}\bone_{\brackets*{\norm*{\frac{q-r}{\gcd(q,r)}\bgamma}<2^{-j+1}}}\bone_C.
\end{equation*}
For each fixed $q\in (Q,2Q]$ and $e \leq (R/Q)2^{\ell-j+2}$, the values of
\begin{equation*}
  h = \frac{q-r}{\gcd(q,r)}
\end{equation*}
are distinct as $r$ ranges through $(R, 2R]$. Bounding the $e$-sum
trivially by $\ll (R/Q)2^{\ell-j}$ and extending the $h$-sum over its maximal
range, we have
\begin{align*}
  &\ll  \parens*{\frac{R}{Q}} \sum_{j\geq 1}\sum_{k \geq 1}\sum_{\ell = k}^{k + 2 +  L}\sum_{\substack{q\in (Q, 2Q] \\ \psi(q) \in (2^{-k}, 2^{-k+1}]}} 2^{j-2k-\ell} \sum_{1 \leq h \leq 2^{\ell-j+3}}\bone_{\brackets*{\norm*{h\bgamma}<2^{-j+1}}}\bone_C \\
  &\ll \parens*{\frac{R}{Q}}\sum_{j\geq 1}\sum_{k\geq 1}\sum_{\ell = k}^{k+2+ L}\sum_{\substack{q\in (Q, 2Q] \\ \psi(q) \in (2^{-k}, 2^{-k+1}]}}  2^{-2k}\underbrace{\parens*{\frac{1}{N}\sum_{1\leq h \leq N}\bone_{\brackets*{\norm*{h\bgamma}<2^{-j+1}}}\bone_C}}_{N=2^{\ell-j+3}} \\
  &\ll \parens*{\frac{R}{Q}}\sum_{j\geq 1}\sum_{k\geq 1}\sum_{\ell = k}^{k+2+ L}\sum_{\substack{q\in (Q, 2Q] \\ \psi(q) \in (2^{-k}, 2^{-k+1}]}}  2^{-2k}\parens*{2^{-\omega j} + \ind{2^{-j+1} \gg 1}}
\end{align*}
by~\th~\ref{adhocbohrsetbounds}. Rearranging gives
\begin{multline}\label{eq:D3punchline}
  \ll \parens*{\frac{R}{Q}}\sum_{k\geq 1} \parens*{\sum_{j\geq 1}\parens*{2^{-\omega j} + \ind{2^{-j+1} \gg 1}}}\parens*{\sum_{\ell=k}^{k+2+ L} 1} \sum_{\substack{q\in (Q, 2Q] \\ \psi(q) \in (2^{-k}, 2^{-k+1}]}}\psi(q)^2 \\
  \ll \parens*{\frac{RL}{Q}}\sum_{q\in (Q, 2Q]}\lambda(E_q').
\end{multline}
By combining~(\ref{eq:D1punchline}),~(\ref{eq:D2punchline}),
and~(\ref{eq:D3punchline}), we find that
\begin{equation*}
  \sum_{j\geq 1}\sum_{(q,r)\in D_j} \lambda(E_q'\cap E_r') \ll \frac{\log^+(Q/R)}{Q/R}\parens*{\sum_{q\in (Q, 2Q]}\lambda(E_q') +  \sum_{r\in (R, 2R]}\lambda(E_r')}.
\end{equation*}
Putting this and~(\ref{eq:F}) into~(\ref{eq:prestar})
proves~\th~\ref{lem:logtolerance_}.

\subsection*{Acknowledgments}

MHT was funded by the Austrian Science Fund, FWF project 10.55776/ESP5134624. This research was also supported by funding from DA's London Mathematical Society Emmy Noether Fellowship, grant number EN-2425-05.
We thank Christoph Aistleitner and Sam Chow for comments on an earlier version of this manuscript.

\bibliographystyle{plain}

\bibliography{main.bib}

\end{document}